\newcommand{\ep}{\varepsilon}
\newcommand{\mr}{\mathring}
\newcommand{\vertiii}[1]{{\left\vert\kern-0.25ex\left\vert\kern-0.25ex\left\vert #1
    \right\vert\kern-0.25ex\right\vert\kern-0.25ex\right\vert}}
\makeatletter \@addtoreset{equation}{section} \makeatother
\theoremstyle{plain}
\newtheorem{theorem}{Theorem}[section]
\newtheorem{lemma}{Lemma}[section]
\newtheorem{proposition}{Proposition}[section]
\theoremstyle{definition}
\newtheorem{remark}{Remark}[section]
\title[The coupled fluid flow with friction-type interface condition]{The well-posedness and regularity of the Non-stationary Stokes and Navier-Stokes equations with the friction-type interface condition}
\author[Qi Wang]{Qi Wang}
\address{Institute of Fundamental and Frontier Sciences, University of Electronic Science and Technology of China, Chengdu, 610051, China}
\email{qi\_wang@std.uestc.edu.cn}
\author[Takahito Kashiwabara]{Takahito Kashiwabara}
\address{Graduate School of Mathematical Sciences, The University of Tokyo, Tokyo, Japan}
\email{tkashiwa@ms.u-tokyo.ac.jp}
\author[Guanyu Zhou]{Guanyu Zhou}
\address{Institute of Fundamental and Frontier Sciences, School of Mathematical Sciences, University of Electronic Science and Technology of China, Chengdu, 610051, China}
\email{wind\_geno@live.com, zhoug@uestc.edu.cn}
\date{\today}
\subjclass[2010]{}
\keywords{(Navier)-Stokes equations, Friction-type interface condition, Variational inequality, Weak solution, Strong solution, Galerkin's method}
\thanks{The research of Takahito Kashiwabara was supported by JSPS Grant-in-Aid for Early-Career Scientists (No. 20K14357). The research of Guanyu Zhou was partially supported by NSFC General Project (No. 12171071) and the Natural Science Foundation of Sichuan Province (No. 2023NSFSC0055).}
\begin{document}
% abstract
\begin{abstract} 
The friction-type interface condition (FIC) is introduced to describe the phenomenon of the slip and leak of fluid flow on the interface happens only when the difference of stress force is above a threshold. 
The FIC involves the subdifferential and can be regarded as an intermediate form of the Dirichlet and the Neumann boundary conditions. 
This work is devoted to the well-posedness of the non-stationary (Navier-)Stokes equations with FIC in 2D and 3D, the weak forms of which are parabolic variational inequalities of the second type. 
We establish the existence theorems using the regularization technique and the Galerkin method. For the Stokes case, we prove the global unique existence and investigate the $H^2$ regularity. 
In the case of 2D Navier-Stokes equation, we show the global unique existence of the weak and strong solutions, respectively. 
For the Navier-Stokes case in 3D, we demonstrate the global existence of the weak solution and the local unique existence of the strong solution. 
\end{abstract}
\maketitle
%
% \tableofcontents
%%
\section{Introduction}\label{sec:1}
%% friction-type
The friction-type leak/slip boundary conditions (FLBC/FSBC) were first introduced by Fujita in ~\cite{Fujita94} for the incompressible viscous flow, which can be utilized to simulate the wave-shore interaction ~\cite{Kawarada98} and the spilled oil ~\cite{Suito04}.
For the (non-)stationary (Navier-)Stokes equations with FLBC/FSBC, the well-posedness, regularity and numerical simulation theories are developed by ~\cite{Fujita02, Fujita02-1, Fujita95, Kashiwabara13-1, Kashiwabara13-2, Kashiwabara13, Saito04}.
A deficiency of FLBC in modeling is noted in ~\cite{zhou23}. 
They point out that if the leakage occurs, by the divergence-free condition, both outflow and inflow must be observed on the boundary. 
For the nonstationary Navier-Stokes equations with FLBC, the energy inequality and the global-in-time weak solution in 3D or strong solution in 2D are unattainable due to the unprescribed inflow.
Therefore, it is more appropriate to model the problem by the friction-type leak interface condition (FLIC):
\begin{equation}\label{eq:leak}
  [\bm{u}_n]=0,~\bm u_\tau|_{\Omega_{\mathrm{in}}}=\bm u_\tau|_{\Omega_{\mathrm{out}}}=0, ~|[\bm{\sigma}(\bm{u}, p) \bm n]\cdot \bm n| \leq g, ~[\bm{\sigma}(\bm{u}, p) \bm n\cdot \bm n]\bm{u}_n-g|\bm{u}_n| =0. 
\end{equation}
Actually, ~\cite{Conca87, Conca88} have proposed the stationary Stokes-Stokes coupled flow with the FLIC to model the viscous fluid through a perforated membrane, where the leakage occurs only when the stress difference on the membrane is above a threshold.
The well-posedness of this model and the limited behavior of the periodically distributed leak parts are investigated in ~\cite{Maris12}. 
However,  the well-posedness has not been proved for the non-stationary Stokes/Navier-Stokes with friction-type interface condition.

%% present results
The existence and uniqueness of weak solution for stationary Stokes equations with FSBC and FLBC are established in ~\cite{Fujita94}. The generalized FSBC is considered in ~\cite{Roux05, Roux07}.
Moreover, the $H^2$-$H^1$ regularity is proved in ~\cite{Saito04}, which can be used to analyze the time-dependent problems. 
For non-stationary cases, ~\cite{Fujita01, Fujita02} study the strong solution of Stokes models with FLBC/FSBC by using nonlinear semigroup theory.
The weak and strong solutions of the Navier-Stokes problems under FLBC/FSBC are proposed in ~\cite{An09, Kashiwabara13}
where they used the nonlinear variational inequality theory ~\cite{Lions67, Glowinski84}, regularization method and Galerkin approximation ~\cite{Temam77}.
We refer the reader to ~\cite{Ayadi10, Fujita95, Kashiwabara13-1, Kashiwabara13-2,Li2008, Li10, Li11} for the numerical methods of the (Navier-)Stokes equations under the FLBC/FSBC.
The motivation of our work is to study the well-posedness of the non-stationary (Navier-)Stokes equations with the FIC.

%% PDE model
Let $\Omega_{\mathrm{in}}$ and $\Omega_{\mathrm{out}}$ be two smooth bounded domains in $\mathbb{R}^d$ $(d=2,3)$ with a common closed interface $\Gamma = \overline{\Omega_{\mathrm{in}}} \cap \overline{\Omega_{\mathrm{out}}}$ (see Fig.~\ref{fig:domain}). 
Set $\Omega_a := \Omega_{\mathrm{in}} \cup \Omega_{\mathrm{out}}$, $\Omega = \Omega_a \cup \Gamma$, $\Gamma_D := \partial\Omega_{\mathrm{out}}\setminus \Gamma$. 
Let $(\bm u, p)$ be the velocity and pressure of the fluid flow. 
The stress and deformation tensors are defined as 
\[ 
  \bm{\sigma}(\bm{u}, p):=2 \nu \mathbb{D}(\bm{u})-p\mathbb{I}, \quad \mathbb{D}(\bm{u}):=(\nabla \bm{u}+\nabla^{\top} \bm{u})/2. 
\]
The Navier-Stokes problem {\bf(NS-P)}  is stated as follows: 
\begin{subequations}\label{eq:NS-P}
  \begin{align}
    \bm{u}' +(\bm{u} \cdot \nabla) \bm{u}-\nabla \cdot \bm{\sigma}(\bm{u}, p)&= \bm f \quad &&\text { in } \Omega_a \times(0, T), \label{eq:P-a}\\
    \nabla \cdot \bm{u}&=0 \quad &&\text { in } \Omega\times(0, T), \label{eq:P-b}\\
    \bm{u}&=\bm{u}_{0} \quad &&\text { in }  \Omega,\label{eq:P-c} \\
    \bm{u}&=0 \quad &&\text { on } \Gamma_{D} \times[0, T], \label{eq:P-d} \\
    [\bm{u}]=0, \quad|[\bm{\sigma}\bm n]| \leq g, \quad[\bm{\sigma} \bm n] \cdot \bm{u}-g|\bm{u}|&=0 \quad &&\text { on } \Gamma \times[0, T].\label{eq:P-e}
  \end{align}
\end{subequations}
Here $\bm n$ is the unit outer normal vector on $\Gamma$ towards $\Omega_{\mathrm{out}}$ and $\bm{\sigma} \bm n$ is the stress vector. 
The threshold function $g$ is a positive function on $\Gamma$. $[A] := A|_{\Omega_{\mathrm{out}}}- A|_{\Omega_{\mathrm{in}}}$ represents the jump across $\Gamma$.

%%%
The Stokes problem {\bf(S-P)} is obtained by omitting $(\bm{u} \cdot \nabla) \bm{u}$, i.e., replacing \eqref{eq:P-a} with 
\begin{equation}\label{eq:S-P}
  \bm{u}' -\nabla \cdot \bm{\sigma}(\bm{u}, p)= \bm f \quad \text { in } {\Omega_a} \times(0, T).
\end{equation}

In view of $[\bm u]= 0$, $\bm u$ is continuous on $\Gamma$. Thus, we have $\bm u \in \bm H^1(\Omega)$, but we do not expect $\bm u \in \bm H^2(\Omega)$. The FIC ~\eqref{eq:P-e} describes the coupled fluid flows with a continuous velocity on the interface,  whereas the jump of the stress vector across $\Gamma$ has an upper bound $g$.  We see that 
\[
\begin{cases}
    \bm{u}=0  &\text { on }\{(x, t) \in \Gamma \times(0, T) : [\bm{\sigma n}]| < g\},\\
    [\bm{\sigma}\bm n]=g \frac{\bm{u}}{|\bm{u}|} &\text { on }\{(x, t) \in \Gamma \times(0, T) : \bm{u} \neq 0\}.
\end{cases}
\]
When the threshold $g$ is sufficiently large, one can expect the homogeneous Dirichlet condition to take place on $\Gamma$,  i.e., $\bm u|_{\Gamma}=0$. 
An extreme case is $g=\infty$, which implies $\bm u=0$ on $\Gamma$. 
On the contrary, if $g=0$, the interface condition becomes the Neumann condition $[\bm\sigma \bm n] = 0$ on $\Gamma$. It's interesting that such FIC connects the Dirichlet and Neumann-type conditions. 
Note that \eqref{eq:P-e} can be equivalently written in the form of subdifferential
\begin{equation}\label{eq:subd}
  [\bm{\sigma} \bm n] \in g \partial|\bm u| =
  \begin{cases}
    g\frac{\bm u}{|\bm u|} &~\bm u\neq 0 \text{ on }\Gamma,\\ 
    \{\bm z \in \mathbb{R}^d \mid |\bm z| \le g \} &~\bm u = 0 \text{ on }\Gamma.
  \end{cases}
\end{equation}

\begin{figure}[htbp] 
  \centering
  \begin{overpic}[scale=0.35]{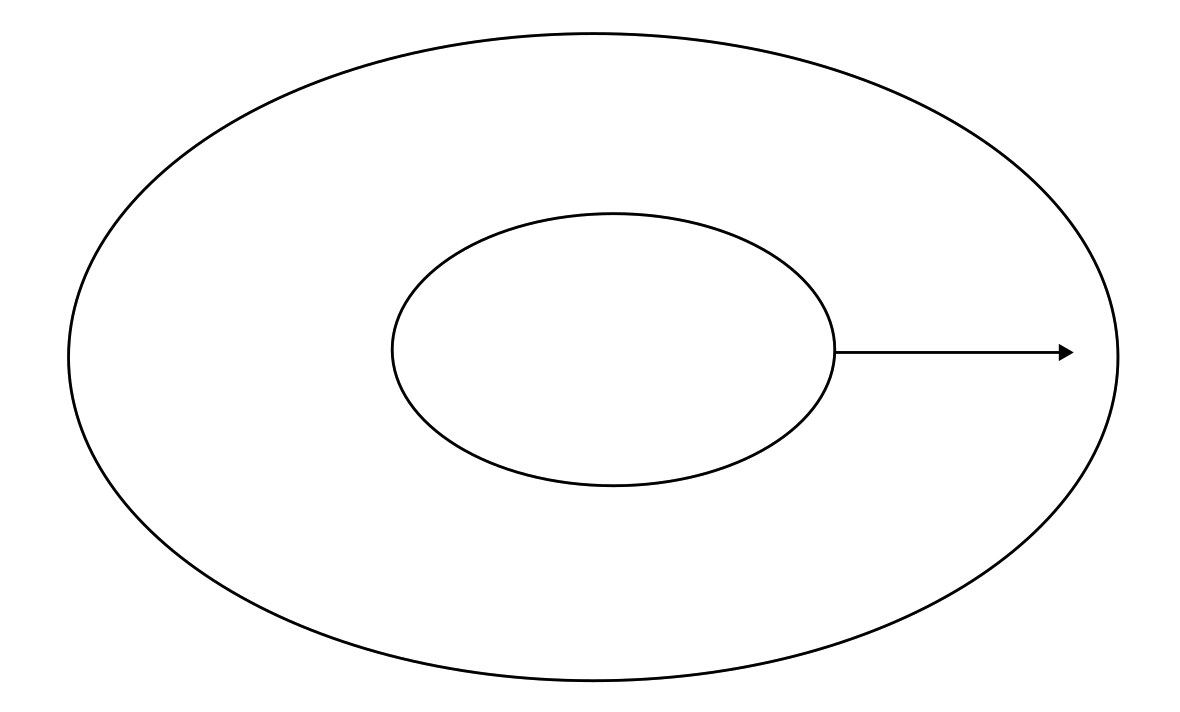}
    \put(45,29){$\Omega_{\mathrm{in}}$}
    \put(80,31){$n$}
    \put(15,29){$\Omega_{\mathrm{out}}$}
    \put(67,35){$\Gamma$}
    \put(67,5){$\Gamma_D$}
  \end{overpic}
  \caption{The regions $\Omega_{\mathrm{in}}$, $\Omega_{\mathrm{out}}$ and the interface $\Gamma$} \label{fig:domain}
\end{figure}
%

%% interface
% There are various kinds of interface conditions being proposed for the coupled fluid interaction, for instance, the Beavers-Joseph(-Saffman) condition ~\cite{Beavers67, Saffman71}, 
% the shear stress jump ~\cite{OchoaTapia1995}, the generalized jump condition ~\cite{Angot18}. They have broad applications in industrial filtration, porous medium, oil/gas reservoirs, groundwater pollution, and blood flow in the tumor.

%% our work
This paper aims to prove the well-posedness of non-stationary (Navier-)Stokes equations with FIC. First, we show the equivalence of parabolic variational inequalities and the PDE in the weak sense for the Stokes case. 
We establish the global unique existence by using the regularization technique and Galerkin's method. 
In addition, we present a detailed discussion on the uniqueness of an additive constant for pressure $p$ for the cases with or without permeation (i.e., $\bm u\cdot \bm n \neq 0$ or $\bm u\cdot \bm n = 0$ on $\Gamma$).
Moreover, we use the regularity result for $\bm u_{0\ep}$ which is the solution of stationary variational inequality at $t=0$ to prove the spatial $H^2$-$H^1$ regularity of $(\bm u,p)$.

Second, we investigate the unique existence of weak and strong solutions for the Navier-Stokes problem in 2D. 
As for the 3D case, we focus on the existence of global weak solution and the unique local existence of strong solution.
We will use the same technique to prove the results. 
The estimate of nonlinear term $(\bm u \cdot \nabla)\bm u$ is difficult because the Sobolev embedding theorem is weaker in 3D.
Our results are better than LBCF considered in ~\cite{Kashiwabara13} where only have a local-in-time solution, even a smallness assumption on $\bm u_0$ in 2D.

%% composition of the paper
The remaining sections of this paper are arranged as follows. The function spaces, notations, and some inequalities are given in Section ~\ref{sec:2}. In Section ~\ref{sec:3}, we demonstrate the global unique existence and regularity of the Stokes case. Finally, section ~\ref{sec:4} is devoted to studying the weak and strong solutions for the Navier-stokes problem.

\section{Preliminaries}\label{sec:2}
Throughout this paper, we assume that the domain $\Omega$ is smooth enough. 
The precise regularity of $\Omega$ which is required for our main theorems can be found later.
If not specified, we use the symbol $C$ to denote different positive constants dependent on $\Omega$ only.

The following part will present function spaces, bilinear and trilinear forms, and some essential inequalities. The function spaces that are vector/tensor-valued are written with bold fonts, whereas fine fonts mean scalar quantities.
\[
\begin{aligned}
  &\bm{\mathcal{D}}(\Omega):=C_0^\infty(\Omega)^d,  \quad \bm H:= \{ \text{the closure of } {\bm {\mathcal{D}}(\Omega)} \text{ in } {\bm L^2(\Omega)} : \bm{u}|_\Gamma \in \bm L^2(\Gamma)\}, \\ 
  & \bm H_\sigma:= \{ \bm v \in \bm H, ~\nabla \cdot \bm v =0 \}, \quad \bm V:= \{ \text{the closure of } {\bm {\mathcal{D}}(\Omega)} \text{ in } {\bm H^1_0(\Omega)} \},\\ 
  & \bm V_\sigma:=\{\bm v \in \bm V,  \nabla \cdot \bm v =0 \}, \quad \bm V^0 :=\{\bm v\in \bm V : \bm v=0 \text{ on }\Gamma\}, \quad \bm V_\sigma^0:= \bm V_\sigma \cap \bm V^0, \\
  & \bm H^2(\Omega_a) := \{ \bm v\in \bm V : \bm v|_{\Omega_\mathrm{in}} \in \bm H^2(\Omega_\mathrm{in}), \bm v|_{\Omega_\mathrm{out}} \in \bm H^2(\Omega_\mathrm{out})\}, \\ 
  & Q:=L^2(\Omega) ,\quad \mr{Q}:=L^2_0(\Omega)=\{q\in Q : (q,1)_{\Omega}=0\}, \\ 
  & H^1(\Omega_a) := \{ q \in \mr{Q}: q|_{\Omega_\mathrm{in}} \in H^1(\Omega_\mathrm{in}), q|_{\Omega_\mathrm{out}} \in H^1(\Omega_\mathrm{out})\}.
\end{aligned}
\]
The norms of $\bm H^2(\Omega_a)$ and $H^1(\Omega_a)$ are defined as follows:
\[
\|\bm v\|_{\bm H^2(\Omega_a)}: = \|\bm v\|_{\bm H^2(\Omega_{\mathrm{in}})} + \|\bm v\|_{\bm H^2(\Omega_{\mathrm{out}})}, \quad \|q\|_{ H^1(\Omega_a)}: = \|q\|_{ H^1(\Omega_{\mathrm{in}})}+ \|q\|_{ H^1(\Omega_{\mathrm{out}})}.
\]
The Lebesgue and Sobolev spaces on $\Gamma$ also be used. 
\[
  \bm \Lambda:= L^2(\Gamma)^d, \quad \bm \Lambda_{1/2}:= H^{\frac{1}{2}}_{00}(\Gamma)^d ,\quad \bm \Lambda_{-1/2}:= H^{-\frac{1}{2}}(\Gamma)^d, \quad \mr{\bm \Lambda}_{1/2} := \{ \bm\mu \in \bm \Lambda_{1/2} : (\bm\mu \cdot \bm n, 1)_\Gamma = 0\}.   
\]
By duality of $\bm \Lambda_{1/2}$ and $\bm \Lambda_{-1/2}$, the definition of $\|\cdot\|_{\bm \Lambda_{-1/2}}$ is 
\[
  \|\bm \lambda\|_{\bm \Lambda_{-1/2}} :=\sup_{\bm \mu \in \bm \Lambda_{1/2}}\frac{\langle \bm \lambda, \bm \mu \rangle_\Gamma}{\|\bm \mu\|_{\bm \Lambda_{1/2}}} \quad \forall \bm \lambda \in \bm \Lambda_{-1/2}.
\]
For a positive function $g$ on $\Gamma$, we define the weighted Lebesgue spaces $\bm L_g^1(\Gamma)$ and $\bm L_{1/g}^{\infty}(\Gamma)$:
\[
  \begin{aligned}
  &\bm L_g^1(\Gamma):=\{\bm \mu \in L^1(\Gamma)^d : \|\bm \mu\|_{L_g^1(\Gamma)}<\infty \} \text{ where } \|\bm \eta\|_{\bm L_g^1(\Gamma)}=\int_{\Gamma} g|\bm\eta | ~d s, \\ 
  &\bm L_{1/g}^{\infty}(\Gamma):=\{\bm \mu \in L^{\infty}(\Gamma)^d : \|\bm \mu\|_{L_{1 / g}(\Gamma)}<\infty\} \text{ where }  \|\bm\eta\|_{\bm L_{1 / g}^{\infty}(\Gamma)}=\mathrm{ess}.\sup _{\Gamma} \frac{|\bm\eta|}{g}.
  \end{aligned}
\]

% %
% The trace operator $\bm \gamma$ is defined from $\bm H^1(\Omega) \to \bm \Lambda_{1/2}$. For any $\bm v \in \bm V$, we have the trace inequality
% %
% \begin{equation}\label{eq:trace}
%   \|\bm v\|_{\bm \Lambda_{1/2}} \le C_T \|\bm v\|_{H^1}.
% \end{equation}

% By the inverse trace (extension) theorem, for any $\bm\eta \in \bm \Lambda_{1/2}$, there exists a $\bm v$ satisfying $\bm v|_{\Gamma}=\bm \eta$ and 
% %
% \begin{equation}\label{eq:lift}
%   \|\bm v\|_{\bm H^1} \le C_E \|\bm \eta\|_{\bm \Lambda_{1/2}}.
% \end{equation}
% %

%%
For all $\bm u, \bm v, \bm w \in \bm V$ and $q \in Q $, we set
\[
  \begin{aligned}
    &a_0(\bm{u},\bm v) :=2\nu\sum_{i,j=1}^{d}\int_{\Omega} e_{ij}(\bm{u})e_{ij}(\bm v) ~dx, \quad e_{ij}(\bm u):=\frac{1}{2}(\frac{\partial \bm u_i}{\partial x_j}+\frac{\partial \bm u_j}{\partial x_i} ),\\
    &b(\bm v,q) :=-\int_{\Omega} \operatorname{div} \bm vq ~d x, \quad a_1(\bm{u},\bm v,\bm{w}):=\int_{\Omega} (\bm{u}\cdot\nabla)\bm v\cdot \bm{w} ~d x.
  \end{aligned} 
\]
Note that we define $a_0$, $b$ and $a_1$ on whole $\Omega$ since the integrations on the zero measure set $\Gamma$ vanish.

The bilinear forms $a_0$ and $b$ are continuous. By the Korn and Poincar\'{e} inequalities, there exist two constants $c_0$ and $c$ $(c_0 > c > 0)$ such that 
\begin{equation}\label{eq:a0}
  a_0(\bm v,\bm v)\geq c_0 \|\nabla \bm v\|_{\bm L^2}^2 \ge c \Vert \bm v \Vert^{2}_{\bm H^1} \quad \forall \bm v \in \bm V.
\end{equation}

Let $\omega \subset \mathbb R^d$ be a bounded and Lipschitz smooth domain. There is a constant $C_1$ depending on $\omega$ such that
\begin{equation}\label{eq:is1}
C_1 \|q\|_{L^2(\omega)} \le \sup_{\bm v \in \bm H^1_0(\omega)} \frac{(\nabla \cdot \bm v, q)_\omega}{ \|\bm v\|_{\bm H^1} } \quad \forall q \in L_0^2(\omega).  
\end{equation}
The inf-sup condition  ~\eqref{eq:is1} will be applied to prove the existence of pressure in $\omega =\Omega_{\mathrm{in}}$ and $\Omega_{\mathrm{out}}$ without additive constant.

For all $\bm{u},\bm v,\bm{w}\in \bm H^1(\Omega)$, we have (\cite{Temam77})
\begin{subequations}\label{eq:a1}
  \begin{align}
   & |a_1(\bm{u}, \bm v, \bm{w})| \le  C\|\bm{u}\|_{\bm L^2}^\frac{1}{2}\|\bm{u}\|_{\bm H^1}^\frac{1}{2} \|\bm v\|_{\bm H^1} \|\bm{w}\|_{\bm L^2}^\frac{1}{2}\|\bm{w}\|_{\bm H^1}^\frac{1}{2}   \quad (d=2), \label{eq:a1-2d} \\ 
   & |a_1(\bm{u}, \bm v, \bm{w})| \le  C\|\bm{u}\|_{\bm L^2}^\frac{1}{4}\|\bm{u}\|_{\bm H^1}^\frac{3}{4} \|\bm v\|_{\bm H^1} \|\bm{w}\|_{\bm L^2}^\frac{1}{4}\|\bm{w}\|_{\bm H^1}^\frac{3}{4}   \quad (d=3), \label{eq:a1-3d} \\ 
   &|a_1(\bm{u}, \bm v, \bm{w})| \le  C\|\bm{u}\|_{\bm H^1} \|\bm v\|_{\bm H^1} \|\bm{w}\|_{\bm H^1}  \quad (d=2,3). \label{a1-2d-3d}
  \end{align}
\end{subequations}
Moreover,
\begin{subequations} \label{eq:a1-1}
  \begin{align}
    &a_1(\bm{u},\bm v,\bm v)=-\frac{1}{2} \int_\Gamma \bm [u_n] |\bm v|^2 ~ds=0, \quad \forall \bm{u} \in \bm V_\sigma,~\bm v \in \bm H_0^1(\Omega), \label{eq:a1=0} \\ 
    &a_1(\bm{u},\bm v,\bm{w})=-a_1(\bm{u},\bm{w},\bm v), \quad \forall \bm{u} \in \bm V_\sigma,~\bm v,\bm{w} \in \bm H_0^1(\Omega). \label{eq:sys}
  \end{align}
\end{subequations}
%
%%%
\begin{remark}\label{rk:FIC}
For the Navier-Stokes equations under LBCF \cite{Kashiwabara13}, \eqref{eq:a1=0} is not valid, i.e., 
\[
  a_1(\bm{u},\bm v,\bm v) =\frac{1}{2} \int_{\Gamma_1} \bm u_n |\bm v|^2 ~ds \neq 0 \quad (\text{ if }\operatorname{div}\bm u=0)
\]
where $\Gamma_1$ is a subset of the boundary.
Even in the 2D case, the smallness assumption on $\bm u_0$ only admits a time-local solution.
Thus, \eqref{eq:a1=0} enables us to avoid the deficiency of energy estimates and obtain the global-in-time weak solution in 3D and strong solution in 2D.
\end{remark}
%%%

%%
\section{The well-posedness and regularity of the Stokes case}\label{sec:3}
In this section, we present the weak form of {\bf{(S-P)}} and prove the global unique existence and regularity. Throughout this section, we assume that 
\begin{equation}\label{eq:ass-S}
  \text{For a.e. } t,~\bm f(t) \in \bm V', ~g(t) \in L^2(\Gamma) \text{ with } g > 0 \text{ and } \bm u_0 \in \bm H_\sigma.
\end{equation}
Additional regularity assumptions of the data will be given later.

%%%%%%%%%%%%%
\subsection{The variational inequalities} 
We derive the parabolic variational inequalities and prove the equivalence theorem. 

Testing \eqref{eq:S-P} by $\bm v\in \bm V$, we obtain 
\[
  \int_{\Omega} \bm u' \cdot \bm v~dx - \int_{\Omega} (\nabla \cdot \bm{\sigma}) \cdot \bm v~dx = \int_{\Omega} \bm f \cdot \bm v~dx \quad \forall \bm v\in \bm V.
\]
In view of $\bm v|_{\Gamma_D} = 0$, and using the integration by parts in $\Omega_{\mathrm{in}}$ and $\Omega_{\mathrm{out}}$, respectively, we have 
\[
  \begin{aligned}
    - \int_{\Omega} (\nabla \cdot \bm{\sigma}) \cdot \bm v~dx & =\int_{\Omega_\mathrm{in}\cup \Omega_\mathrm{out}}\bm{\sigma}\cdot \nabla \bm v ~dx - \int_{\Gamma_D}\bm {\sigma n} \cdot \bm v ~ds +\int_\Gamma [\bm{\sigma n}]\cdot \bm v ~ds  \\ 
    &= \nu \int_{\Omega_\mathrm{in} \cup \Omega_\mathrm{out}}(\nabla \bm u+\nabla^\top \bm u)\cdot \nabla \bm v~dx -\int_{\Omega_\mathrm{in} \cup \Omega_\mathrm{out}}p \mathbb{I}\cdot \nabla \bm v~dx  +\int_{\Gamma}[\bm{\sigma n}]\cdot \bm v~ds \\ 
    & = a_0(\bm u, \bm v) + b(\bm v, p) + \int_{\Gamma}[\bm{\sigma n}]\cdot \bm v~ds.
  \end{aligned}
\]

A primal weak formulation of {\bf(S-P)} (i.e., \eqref{eq:S-P}, \eqref{eq:P-b}-\eqref{eq:P-e}) is as follows.
For simplicity, we omit $(t)$ when not confusing.

%%%Problem S-PDE
{\bf(S-PDE)}  For a.e $t \in (0, T)$, find $(\bm u(t),p(t)) \in \bm V_\sigma \times \mr{Q}$ such that $\bm u'(t) \in \bm V_\sigma'$, $\bm{u}(0) = \bm{u}_0$,
\begin{subequations}\label{eq:S-PDE}
\begin{align}
  & \langle \bm{u}',\bm v\rangle + a_0(\bm{u},\bm v) + b(\bm v, p) = \langle \bm{f},\bm v \rangle \quad \forall \bm v \in \bm V^0, \label{eq:S-PDE-a}\\ 
  & |[\bm\sigma \bm{n}](t)| \le g(t),~ [\bm\sigma \bm{n}](t) \cdot \bm{u} - g(t)|\bm{u}| = 0 \text{ a.e. on } \Gamma,  \label{eq:S-PDE-b}
\end{align}
\end{subequations}
where $[\bm\sigma \bm{n}](t) \in \bm \Lambda_{-1/2}$ is defined by 
\begin{equation}\label{eq:sigman-def}
\langle [\bm\sigma \bm{n}], \bm v \rangle_{\bm \Lambda_{1/2}} = \langle \bm{f},\bm v\rangle - \langle \bm{u}',\bm v\rangle - a_0(\bm u,\bm v) - b(\bm v, p) \quad \forall \bm v \in \bm V.
\end{equation}
Note that $[\bm\sigma \bm{n}](t)$ is well-defined by the duality and \eqref{eq:sigman-def} because the trace operator $\bm V \ni \bm v\rightarrow \bm v|_\Gamma$ is a surjection and every $\bm\eta \in \bm \Lambda_{1/2}$ has a continuous extension $\bm v \in \bm V$, i.e., $\bm v|_\Gamma = \bm\eta$ and $\| \bm v\|_{\bm V} \le C_E \|\bm\eta\|_{\bm \Lambda_{1/2}}$. 
It is easy to check that a classical solution of {\bf{(S-P)}} solves {\bf(S-PDE)}, and conversely, a sufficiently smooth solution of {\bf(S-PDE)} is a classical solution. 

Introduce the nonlinear functional
\[
j(t;\bm\eta)=\int_{\Gamma}g(t) |\bm\eta|~ds \quad (\forall \bm\eta \in \bm \Lambda).
\]
We will show that {\bf(S-PDE)} is equivalent to the following variational inequality. 

%%%Problem S-VI
{\bf(S-VI)$_\sigma$}  For a.e $t \in (0, T)$, find $\bm u(t) \in \bm V_\sigma$ such that $\bm u'(t) \in \bm V_\sigma'$, $\bm u(0) = \bm u_0$ and 
\begin{equation}\label{eq:S-VI-sig}
\langle \bm u',\bm v- \bm u\rangle +a_0(\bm u,\bm v-\bm u)+j(t;\bm v)-j(t;\bm u) \geq \langle \bm f,\bm v-\bm u\rangle \quad \forall \bm v  \in \bm V_\sigma.
\end{equation}
%

%%%% equivalence thm PDE-VI
\begin{theorem}\label{th:equi-S}
  The solution of {\bf(S-PDE)} solves {\bf(S-VI)$_\sigma$}. 
  Conversely, if {\bf(S-VI)$_\sigma$} admits a solution $\bm u(t)$, then there exists at least one $p(t) \in \mr{Q}$ such that $(\bm u(t),p(t))$ solves  {\bf(S-PDE)}.
  If another $p(t)^* \in \mr{Q}$ satisfies the same condition, then there exists a unique $\delta(t) \in \mathbb R$ such that $[p(t)]=[p^*(t)] +\delta(t)$. 
  Furthermore, if $\bm{u}\cdot \bm{n}(t) \neq 0$ on $\Gamma$, then $\delta(t) = 0$, i.e., the associated pressure $p(t)$ is uniquely determined.
\end{theorem}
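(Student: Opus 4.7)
The plan is to verify the four parts of the theorem in sequence, relying on divergence-free test functions and the inf-sup condition \eqref{eq:is1} on each subdomain. First, to derive \textbf{(S-VI)}$_\sigma$ from \textbf{(S-PDE)}, I plug $\bm v-\bm u$ with $\bm v\in\bm V_\sigma$ into \eqref{eq:sigman-def}: the divergence-free constraint eliminates $b(\bm v-\bm u,p)$, while \eqref{eq:S-PDE-b} yields $\langle[\bm\sigma\bm n],\bm u\rangle_\Gamma=j(t;\bm u)$ and $\langle[\bm\sigma\bm n],\bm v\rangle_\Gamma\le j(t;\bm v)$ (the latter via $|[\bm\sigma\bm n]|\le g$ and pointwise Cauchy--Schwarz). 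Combining these gives $\langle[\bm\sigma\bm n],\bm v-\bm u\rangle_\Gamma\le j(\bm v)-j(\bm u)$, which rearranges back into \eqref{eq:S-VI-sig}.

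For the converse, given $\bm u$ solving \textbf{(S-VI)}$_\sigma$, I construct $p$ by de Rham. Testing \eqref{eq:S-VI-sig} with $\bm v=\bm u\pm\bm w$ for $\bm w\in\bm V_\sigma^0$ (so $j(\bm u\pm\bm w)=j(\bm u)$) reduces the VI to the equation $\langle\bm u',\bm w\rangle+a_0(\bm u,\bm w)=\langle\bm f,\bm w\rangle$ on $\bm V_\sigma^0$. Define the continuous linear functional $\Phi(\bm w):=\langle\bm f-\bm u',\bm w\rangle-a_0(\bm u,\bm w)$ on $\bm V^0$; since $\bm V^0$ decouples as $\bm H^1_0(\Omega_\mathrm{in})\times\bm H^1_0(\Omega_\mathrm{out})$ and $\Phi$ annihilates the divergence-free part of each factor, two applications of the de Rham correspondence via \eqref{eq:is1} produce $p_\mathrm{in}\in L^2(\Omega_\mathrm{in})$ and $p_\mathrm{out}\in L^2(\Omega_\mathrm{out})$ (each unique up to an additive constant) with $\Phi(\bm w)=b(\bm w,p)$ for the piecewise-defined $p$. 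Normalizing $\int_\Omega p=0$ pins down the sum of the two constants, leaving a single free parameter, namely the jump $[p]$. The element $[\bm\sigma\bm n]\in\bm\Lambda_{-1/2}$ defined by \eqref{eq:sigman-def} is then well-posed because its right-hand side vanishes on $\bm V^0$.

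Verifying \eqref{eq:S-PDE-b} is the step I anticipate as the principal obstacle. Substituting \eqref{eq:sigman-def} into \eqref{eq:S-VI-sig} with $\bm v\in\bm V_\sigma$ and $b(\bm v-\bm u,p)=0$ yields $\langle[\bm\sigma\bm n],\bm v-\bm u\rangle_\Gamma\le j(\bm v)-j(\bm u)$, expressing the subdifferential inclusion $[\bm\sigma\bm n]\in\partial J(\bm u|_\Gamma)$ tested only against $\mathring{\bm\Lambda}_{1/2}$---the normal-mean direction is absent. The remaining pressure-jump freedom corresponds precisely to shifts $[\bm\sigma\bm n]\mapsto[\bm\sigma\bm n]+\delta\bm n$ (adding a piecewise constant to $p$ alters $b(\bm v,p)$ by a multiple of $\int_\Gamma\bm v\cdot\bm n$), so a one-dimensional convex-analysis argument (the quotient of $\partial J(\bm u)$ modulo $\mathbb{R}\bm n$ coincides with the range of the restricted subdifferential on $\mathring{\bm\Lambda}_{1/2}$) selects $\delta$ for which the inequality extends to all of $\bm\Lambda_{1/2}$. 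The standard a.e.\ characterization of $\partial J(\bm u)$ then produces the pointwise conditions $|[\bm\sigma\bm n]|\le g$ and $[\bm\sigma\bm n]\cdot\bm u=g|\bm u|$ a.e.\ on $\Gamma$.

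Finally, for uniqueness: if $p,p^*\in\mathring{Q}$ both solve \textbf{(S-PDE)}, then $b(\bm w,p-p^*)=0$ on $\bm V^0$ forces $p-p^*$ to be constant on each of $\Omega_\mathrm{in}$ and $\Omega_\mathrm{out}$; the zero-mean constraint reduces these two constants to the single parameter $\delta=[p]-[p^*]\in\mathbb{R}$, and uniqueness of $\delta$ is immediate. For the last assertion, if $\bm u\cdot\bm n\neq0$ on a positive-measure subset of $\Gamma$, subtracting the identities $[\bm\sigma\bm n]\cdot\bm u=g|\bm u|=[\bm\sigma\bm n]^*\cdot\bm u$ and invoking $[\bm\sigma\bm n]^*=[\bm\sigma\bm n]+\delta\bm n$ gives $\delta(\bm u\cdot\bm n)=0$ a.e., hence $\delta=0$ and $p$ is uniquely determined.
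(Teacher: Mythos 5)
Your proposal is correct and follows essentially the same route as the paper: direct substitution for the forward implication, restriction to $\bm V_\sigma^0$ plus the subdomain-wise inf-sup condition \eqref{eq:is1} to recover the pressure up to the jump constant, the subdifferential inequality on the mean-zero trace space $\mr{\bm\Lambda}_{1/2}$, and the $\delta\bm n$/pressure-jump adjustment together with the $\delta(\bm u\cdot\bm n)=0$ argument for uniqueness. The only presentational difference is that the paper makes your ``one-dimensional convex-analysis argument'' precise via a Hahn--Banach dominated extension of the restricted functional to $\bm L^1_g(\Gamma)$ and the duality $(\bm L^1_g(\Gamma))'=\bm L^\infty_{1/g}(\Gamma)$, which yields $|[\bm\sigma\bm n]|\le g$ and $[\bm\sigma\bm n]\cdot\bm u=g|\bm u|$ a.e.\ directly.
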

%%%%

%%%%
\begin{proof}
%%% PDE->VI
Let $(\bm{u}, p)$ solve {\bf(S-PDE)}. \eqref{eq:S-VI-sig} follows directly from~\eqref{eq:S-PDE-b}, \eqref{eq:sigman-def}. Hence, $\bm{u}$ solves {\bf(S-VI)$_\sigma$}. 

%%% VI->PDE
Conversely, let $\bm{u}$ be the solution of {\bf(S-VI)$_\sigma$}. 
Substituting $\bm v = \bm u \pm \bm w$ with arbitrary $\bm w \in \bm V_\sigma^0$ into \eqref{eq:S-VI-sig} yields  
\[
\langle \bm u',\bm{w}\rangle + a_0(\bm u,\bm{w}) = \langle \bm f,\bm{w}\rangle \quad \forall \bm{w} \in \bm V^0_\sigma. 
\]
By the inf-sup condition \eqref{eq:is1}, there exists a unique $\mr{p}(t) = (\mr{p}_\mathrm{in}, \mr{p}_\mathrm{out}) \in L_0^2(\Omega_\mathrm{in}) \times L_0^2(\Omega_\mathrm{out})$ such that 
\[
\langle \bm u',\bm{w}\rangle + a_0(\bm u,\bm{w}) + b(\bm{w}, \mr{p}) = \langle \bm f, \bm{w}\rangle \quad \forall \bm{w} \in \bm V^0. 
\]
There exists a unique $\mr{\bm\lambda} \in \bm\Lambda_{-1/2}$ such that 
\begin{equation}\label{eq:equi-S-2}
\langle \bm u', \bm v\rangle + a_0(\bm{u},\bm v) + b(\bm v, \mr{p}) + \langle \mr{\bm\lambda},\bm v\rangle_{\bm \Lambda_{1/2}} = \langle \bm f,\bm v\rangle \quad \forall \bm v \in \bm V. 
\end{equation}
For all $\bm{w} \in \bm V_\sigma$, it follows from \eqref{eq:S-VI-sig} and \eqref{eq:equi-S-2} that 
\[
\langle \mr{\bm\lambda}, \bm{w} - \bm u \rangle_{\bm \Lambda_{1/2}} \le j(t;\bm{w}) - j(t;\bm u) \quad \forall \bm{w} \in \bm V_\sigma.
\]
Taking $\bm{w} = 0, 2\bm{u}$ into above inequalities results in
\[
\langle \mr{\bm\lambda}, \bm u \rangle_{\bm \Lambda_{1/2}} = j(t;\bm u), \quad \langle \mr{\bm\lambda}, \bm{w} \rangle_{\bm \Lambda_{1/2}} \le j(t; \bm{w})  \quad \forall \bm{w} \in \bm V_\sigma, 
\]
or equivalently, 
\[
\langle \mr{\bm\lambda}, \bm\mu \rangle_{\bm \Lambda_{1/2}} \le j(t;\bm\mu) = \int_\Gamma g(t)|\bm\mu|~ds \quad \forall \bm\mu \in \mr{\bm \Lambda}_{1/2},   
\]
which means that $\mr{\bm\lambda}$ is a continuous linear functional on $\mr{\bm \Lambda}_{1/2}$. 
Using the Hahn-Banach theorem, we can extend $\mr{\bm\lambda}$ to a bounded linear functional $\bm\lambda$ on $\bm L^1_g(\Gamma)$. This functional satisfies
\[
  |\langle \bm\lambda, \bm\mu \rangle_{\bm L_g^1(\Gamma)}| \le \int_\Gamma g(t)|\bm\mu| ~ds=\|\bm\mu \|_{\bm L_g^1(\Gamma)} \quad \forall \bm\mu \in \bm L_g^1(\Gamma),
\]
which implies $\bm\lambda \in ( \bm L_g^1(\Gamma) )' = \bm L_{1/g}^\infty(\Gamma)$ and $|\bm\lambda| \le g(t)$ a.e. on $\Gamma$. In addition, we have 
\[
\int_\Gamma \bm\lambda \cdot \bm{u}~ds = \int_\Gamma\mr{\bm\lambda} \cdot \bm{u}~ds =  j(t;\bm u) = \int_\Gamma g(t)|\bm{u}|~ds, 
\]
together with $|\bm\lambda| \le g(t)$ a.e., we conclude $\bm\lambda \cdot \bm u = g(t)|\bm u|$ a.e. on $\Gamma$. 

Notice that such an extension may not be unique. In fact, since 
\[
\langle \mr{\bm\lambda} -\bm\lambda, \bm\mu \rangle_{\bm \Lambda_{1/2}} =0 \quad  \text{for all } \bm\mu \in \mr{\bm \Lambda}_{1/2} \quad  (\text{note that} \int_\Gamma \bm\mu \cdot \bm n ~ds =0),
\]
there exists a constant $\delta(t)$ such that $\mr{\bm\lambda} - \bm\lambda = \delta(t) \bm{n}$. 
If $\delta(t)$ is given, then we can find two constants $k_\mathrm{in}(t)$ and $k_\mathrm{out}(t)$ satisfying 
\[
  k_\mathrm{out}(t) - k_\mathrm{in}(t) = \delta(t), \quad  (k_\mathrm{in}(t), 1)_{\Omega_\mathrm{in}} + (k_\mathrm{out}(t),1)_{\Omega_\mathrm{out}} = 0.
\]
Note that $k_\mathrm{in}(t)$ and $k_\mathrm{out}(t)$ are uniquely determined by $\delta(t)$. 
Now, we set $p(t) := (\mr{p}_\mathrm{in} + k_\mathrm{in}(t), \mr{p}_\mathrm{out} + k_\mathrm{out}(t)) \in \mr{Q}$ and $[\bm\sigma \bm{n}](t):=\bm\lambda = \mr{\bm\lambda} - \delta(t) \bm{n}$. 
It is easy to verify that \eqref{eq:equi-S-2} still holds if $(\mr{\bm\lambda}, \mr{p})$ is replaced with $([\bm\sigma \bm{n}], p)$, i.e., 
\[
  \langle \bm u', \bm v\rangle + a_0(\bm u,\bm v) + b(\bm v, p) + \langle [\bm\sigma \bm{n}],\bm v\rangle_{\bm \Lambda_{1/2}} = \langle \bm{f},\bm v\rangle \quad \forall \bm v \in \bm V. 
\]
Hence, $(\bm{u} , p)$ is a solution of {\bf(S-PDE)}. 

If $\bm{u}\cdot \bm{n} = 0$ on $\Gamma$, then for any $\delta(t) \in \mathbb{R}$ such that $|[\bm\sigma \bm{n}]| =|\mr{\bm\lambda} - \delta(t)\bm n| \le g(t)$ a.e. on $\Gamma$, 
$(\bm{u},p)$ solves {\bf(S-PDE)}.   
Note that $ [p] = [\mr{p}] + k_\mathrm{out} - k_\mathrm{in} =  [\mr{p}] + \delta(t)$.
Therefore, if both $(\bm{u} , p)$ and $(\bm{u} , p^*)$ are the solutions of {\bf(S-PDE)}, then $[p]-[p^*]$ must be a constant.  

When $\{x \in \Gamma: \bm{u} \cdot \bm{n} \neq 0\}$ has a positive measure, since $\int_\Gamma \bm u\cdot \bm n~ds =0 (\text{ by } \nabla \cdot \bm u =0)$, then the following two subsets must have positive measure:
\[
A_+:=\{x\in \Gamma : \bm{u} \cdot \bm{n} >0\},\quad A_-:=\{x\in \Gamma : \bm{u} \cdot \bm{n} < 0\}.
\] 
If there is another $\bar{\delta}(t)$ such that 
\[ 
|\mr{\bm\lambda} - \bar{\delta}(t)\bm n| \le g(t) \text{ and }  (\mr{\bm\lambda} - \bar{\delta}(t)\bm n)\cdot \bm u = g(t)|\bm u|,
\]
together with  $(\mr{\bm\lambda} - \delta(t)\bm n)\cdot \bm u = g(t)|\bm u|$, we see that $(\delta(t)-\bar{\delta}(t))\bm n\cdot \bm u = 0$, which implies $\delta(t) = \bar{\delta}(t) $.
Hence, $\delta(t)$ is unique, so do $\bm \lambda$ and $p$.
\end{proof}
%%%%

%%%%
\begin{remark} 
  According to $|\mr{\bm\lambda} - \delta(t)\bm n| \le g(t)$, together with the triangle inequality and $|\mr{\bm\lambda}| \le g(t)$,  
  the constant $\delta(t)$ is limited to the closed interval $[-2g(t),2g(t)]$. 
\end{remark}
%%%%

Given $\bm f \in L^2(0, T; \bm V')$ and $g \in L^2(0,T; L^2(\Gamma))$, we will consider the following weak formulation. 

%%% Problem S-VI-weak
{\bf(S-VI)$_\sigma^w$}  Find $\bm u \in L^2(0, T; \bm V_\sigma)$ such that $\bm u(0) = \bm u_0$ and 
\begin{equation}\label{eq:S-VI-sigw}
  \frac{d}{dt}(\bm u,\bm{v}) - \frac{1}{2}\frac{d}{dt}(\bm u,\bm u) +a_0(\bm u,\bm v-\bm u)  + j(t;\bm{v})-j(t;\bm u) \geq \langle \bm f,\bm{v}-\bm u \rangle \quad \forall \bm v  \in \bm V_\sigma.
\end{equation}
We will see that $\bm{u}' \in L^2(0, T; \bm V_\sigma')$ actually holds true for {\bf(S-VI)$_\sigma^w$}. Thus, \eqref{eq:S-VI-sigw} can be equivalently stated as follows: 
\begin{equation}\label{eq:S-VI-sigw'}
  \int_0^T \langle \bm u',\bm v-\bm u \rangle + a_0(\bm u,\bm v-\bm u) + j(t;\bm{v})-j(t;\bm u)~dt \geq \int_0^T\langle \bm f,\bm{v}-\bm u \rangle~dt \quad \forall \bm{v}  \in L^2(0, T; \bm V_\sigma).
\end{equation}
%

%%%%%%%%%%%
\subsection{The regularization problem}
We shall apply the regularization method to establish the well-posedness theory of {\bf(S-VI)$_\sigma$}. 
The essential idea is to approximate the non-differentiable nonlinear term $j(t;\cdot)$ by a Frechet differentiable functional $j_\ep(t;\cdot)$ with a small positive parameter $\ep$. 
We introduce the regularization function $\rho_\ep(\cdot)$ satisfying: 
%%%
\begin{enumerate}
  \item[(1)] $\rho_\ep \in C^2(\mathbb R^d)$, $\rho_\ep$ is non-negative, convex function and 
  \begin{equation}\label{eq:rho-err}
    |\rho_\ep(\bm z)-|\bm z| |\leq \ep \quad \forall \bm z \in \mathbb{R}^d;
  \end{equation} 
  \item[(2)] $\alpha_\ep(\bm z)$ denotes $\nabla\rho_\ep(\bm z)$, $|\alpha_\ep(\bm z)| \leq 1$ and
  \begin{equation}\label{eq:alpha_ep}
  \alpha_\ep(\bm z) \cdot \bm{z} \geq 0 \quad \forall \bm z \in \mathbb{R}^d.
  \end{equation}
  Moreover, the Hessian matrix of $\rho_\ep$, denoted by $\beta_\ep$ is semi-positive definite, i.e.,
  \begin{equation}\label{eq:beta_ep}
  \bm{y}^\top \beta_\ep(\bm z) \bm{y} \geq 0 \quad \forall \bm y, \bm z \in \mathbb{R}^d.
  \end{equation}
\end{enumerate}
%%%
For instance, we can choose $\rho_\ep(\bm z) =\sqrt{|\bm z|^2+\ep^2} - \ep$.

Now, we define the nonlinear regularization functional
\[
j_\ep(t;\bm\eta) := \int_\Gamma g(t) \rho_\ep ( \bm\eta )~ds \qquad \forall \bm\eta \in \bm \Lambda_{1/2}. 
\]
Now, replacing $j(t; \cdot)$ with $j_\ep(t; \cdot)$ in the variational inequality {\bf(S-VI)$_\sigma$}, we get the following regularization problem. 

%%% Problem S-VI-ep
{\bf(S-VI)$_{\sigma,\ep}$}  For a.e. $t\in (0, T)$, find $\bm u_\ep(t) \in \bm V_\sigma$ such that $\bm u_\ep'(t) \in \bm V_\sigma'$, $\bm u_\ep(0) = \bm u_0$ and
\begin{equation}\label{eq:S-VIe}
  \langle \bm u'_\ep, \bm{v}-\bm u_\ep\rangle +a_0(\bm u_\ep,\bm{v}-\bm u_\ep)+j_\ep(t;\bm{v})-j(t;\bm u_\ep) \geq \langle \bm f,\bm{v}-\bm u_\ep \rangle \quad \forall \bm{v}  \in \bm V_\sigma.
\end{equation}
Since $\rho_\ep$ is differentiable, the above variational inequality is actually equivalent to the following nonlinear variational problem. 

%%% Problem S-VE-ep
{\bf(S-VE)$_{\sigma,\ep}$}  For a.e. $t\in (0, T)$, find $\bm u_\ep(t) \in \bm V_\sigma$ such that $\bm u'_\ep(t) \in \bm V_\sigma'$, $\bm u_\ep(0) = \bm u_0$ and   
\begin{equation}\label{eq:S-VEe}
  \langle \bm u'_\ep,\bm{v} \rangle +a_0(\bm u_\ep,\bm{v})+\int_{\Gamma}g \alpha_\ep(\bm u_\ep)\cdot \bm{v}~ds =\langle \bm f, \bm{v}\rangle  \quad \forall \bm{v}  \in \bm V_\sigma.
\end{equation}
%

%%%
\begin{remark}\label{rk:equi-S}
  The PDE of the regularization problem {\bf(S-VE)$_{\sigma,\ep}$} is to replace the subgradient-type interface condition \eqref{eq:subd} with the Robin type condition
  \[
  [\bm\sigma(\bm u_\ep, p_\ep) \bm n] = g\alpha_\ep(\bm u_\ep) \text{ on } \Gamma. 
  \]
\end{remark}
%%%

We also consider the following weak formulation which is equivalent to {\bf(S-VE)$_{\sigma,\ep}$} with $\bm{u}_\ep \in L^2(0, T; \bm V_\sigma)$ and $\bm{u}_\ep' \in L^2(0, T; \bm V_\sigma')$.

%%%
{\bf(S-VE)$_{\sigma,\ep}^w$}  Find $\bm{u}_\ep \in L^2(0, T; \bm V_\sigma)$ such that $\bm{u}_\ep(0) = \bm u_0$ and 
\[
  \frac{d}{dt}(\bm{u}_\ep,\bm{v}) +a_0(\bm u_\ep,\bm v) + \int_{\Gamma}g\alpha_\ep(\bm u_\ep)\cdot \bm{v}~ds = \langle \bm f,\bm{v} \rangle \quad \forall \bm{v}  \in \bm V_\sigma.
\]
%

%%%%%%%%%%%%%%%
\subsection{The unique existence of weak solution}
First, we demonstrate the existence of the regularization problem {\bf(S-VE)$_{\sigma,\ep}^w$} using the Galerkin method. 
Then, by passing to the limit $\ep \rightarrow 0$, we show the well-posedness of {\bf(S-VI)$_\sigma^w$}. 

%%% thm VE
\begin{theorem}\label{th:S-wp-ue}
Under the assumptions of \eqref{eq:ass-S}, there exists a unique weak solution $\bm u_\ep \in L^2(0, T; \bm V_\sigma )\cap L^\infty(0, T; \bm H_\sigma)$ of {\bf(S-VE)$_{\sigma,\ep}^w$} with the estimates
\begin{subequations}\label{eq:S-weak-e}
\begin{align}
  & \|\bm u_\ep\|_{L^\infty(0, T; \bm H_\sigma)}
  + \| \bm u_\ep \|_{L^2(0, T; \bm  V_\sigma)} \le C(\|\bm f\|_{L^2(0, T; \bm V')} + \|\bm u_0\|), \label{eq:S-weak-e-a} \\
  & \| \bm u'_\ep\|_{L^2(0, T; \bm V_\sigma')} \le C(\|\bm f\|_{L^2(0, T; \bm V')} + \|g\|_{L^2(0, T; L^2(\Gamma))} + \|\bm u_0\|). \label{eq:S-weak-e-b}
\end{align}
\end{subequations}
\end{theorem}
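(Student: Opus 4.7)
The plan is to use the Galerkin method to produce $\bm u_\ep$, to exploit the sign condition \eqref{eq:alpha_ep} for the uniform energy estimate, and to get uniqueness from the monotonicity of $\alpha_\ep$ implied by \eqref{eq:beta_ep}.

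First, I would fix a countable basis $\{\bm w_k\}_{k\ge 1}$ of $\bm V_\sigma$ that is orthonormal in $\bm H_\sigma$ (for instance, eigenfunctions of a Stokes-type operator with the mixed conditions $\bm v = 0$ on $\Gamma_D$ and $[\bm v]=0$ on $\Gamma$). For each $m$ I seek $\bm u_\ep^m(t)=\sum_{k=1}^m c_k^m(t)\bm w_k$ solving the Galerkin identity
\[
(\bm u_\ep^{m\,\prime},\bm w_k)+a_0(\bm u_\ep^m,\bm w_k)+\int_\Gamma g\,\alpha_\ep(\bm u_\ep^m)\cdot \bm w_k\,ds=\langle \bm f,\bm w_k\rangle,\qquad 1\le k\le m,
\]
with $\bm u_\ep^m(0)$ the $\bm H_\sigma$-projection of $\bm u_0$ onto $\mathrm{span}\{\bm w_1,\dots,\bm w_m\}$. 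Since $\rho_\ep\in C^2$ and its gradient $\alpha_\ep$ is globally Lipschitz on $\mathbb{R}^d$ (as in the model $\rho_\ep(\bm z)=\sqrt{|\bm z|^2+\ep^2}-\ep$), the ODE system for $(c_1^m,\dots,c_m^m)$ has a (locally) Lipschitz right-hand side, and Picard's theorem delivers a unique local solution.

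Next, multiplying the $k$th equation by $c_k^m$ and summing yields
\[
\tfrac12\tfrac{d}{dt}\|\bm u_\ep^m\|^2+a_0(\bm u_\ep^m,\bm u_\ep^m)+\int_\Gamma g\,\alpha_\ep(\bm u_\ep^m)\cdot \bm u_\ep^m\,ds=\langle \bm f,\bm u_\ep^m\rangle.
\]
By \eqref{eq:alpha_ep} the boundary integral is non-negative and can be discarded; coercivity \eqref{eq:a0} with Young's inequality then gives $\tfrac{d}{dt}\|\bm u_\ep^m\|^2+c\|\bm u_\ep^m\|_{\bm H^1}^2\le C\|\bm f\|_{\bm V'}^2$. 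Integration in time with $\|\bm u_\ep^m(0)\|\le\|\bm u_0\|$ yields \eqref{eq:S-weak-e-a} uniformly in $m$ and extends $\bm u_\ep^m$ to $[0,T]$. For \eqref{eq:S-weak-e-b}, I test against $P_m\bm v$ for arbitrary $\bm v\in \bm V_\sigma$ with $\|\bm v\|_{\bm V}\le 1$, use $|\alpha_\ep|\le 1$ to control the boundary term by $\|g\|_{L^2(\Gamma)}\|P_m\bm v\|_{\bm L^2(\Gamma)}\le C\|g\|_{L^2(\Gamma)}\|\bm v\|_{\bm V}$ via the trace theorem, and deduce the $L^2(0,T;\bm V_\sigma')$ bound on $\bm u_\ep^{m\,\prime}$.

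For the passage $m\to\infty$, a subsequence converges weakly in $L^2(0,T;\bm V_\sigma)$, weakly-$*$ in $L^\infty(0,T;\bm H_\sigma)$, with $\bm u_\ep^{m\,\prime}\rightharpoonup \bm u_\ep'$ weakly in $L^2(0,T;\bm V_\sigma')$. The linear terms pass to the limit directly. The key technical step is the boundary nonlinearity: the trace $\bm V_\sigma\hookrightarrow \bm L^2(\Gamma)$ is compact, and $\bm L^2(\Gamma)\hookrightarrow \bm V_\sigma'$ continuously (via $\langle\lambda,\bm v\rangle=\int_\Gamma \lambda\cdot \bm v|_\Gamma\,ds$), so Aubin--Lions applied to $\bm V_\sigma\hookrightarrow\!\hookrightarrow \bm L^2(\Gamma)\hookrightarrow \bm V_\sigma'$ gives $\bm u_\ep^m\to \bm u_\ep$ strongly in $L^2(0,T;\bm L^2(\Gamma))$. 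Continuity of $\alpha_\ep$, the bound $|\alpha_\ep|\le 1$, and dominated convergence yield $\alpha_\ep(\bm u_\ep^m)\to \alpha_\ep(\bm u_\ep)$ strongly in $L^2(0,T;\bm L^2(\Gamma))$, so since $g\in L^2(0,T;L^2(\Gamma))$ the boundary integral converges and $\bm u_\ep$ solves \textbf{(S-VE)$_{\sigma,\ep}^w$}; the initial condition follows from $\bm u_\ep\in C([0,T];\bm H_\sigma)$.

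For uniqueness, subtract two solutions: $\bm w=\bm u_\ep^{(1)}-\bm u_\ep^{(2)}$ satisfies
\[
\tfrac12\tfrac{d}{dt}\|\bm w\|^2+a_0(\bm w,\bm w)+\int_\Gamma g\bigl(\alpha_\ep(\bm u_\ep^{(1)})-\alpha_\ep(\bm u_\ep^{(2)})\bigr)\cdot \bm w\,ds=0,
\]
and \eqref{eq:beta_ep} together with the mean-value theorem gives monotonicity $(\alpha_\ep(\bm a)-\alpha_\ep(\bm b))\cdot(\bm a-\bm b)\ge 0$, so the boundary term is non-negative and $\bm w(0)=0$ forces $\bm w\equiv 0$. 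The main obstacle I anticipate is precisely passing the nonlinear boundary term to the limit: weak convergence alone is insufficient, and the whole argument hinges on the strong $L^2(0,T;\bm L^2(\Gamma))$ convergence supplied by Aubin--Lions through compactness of the trace, which is the essential ingredient; the remaining steps are standard.
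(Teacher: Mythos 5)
Your proposal is correct and follows essentially the same route as the paper: Galerkin approximation, the energy estimate obtained by discarding the boundary term via \eqref{eq:alpha_ep}, a compactness argument giving strong convergence of the traces on $(0,T)\times\Gamma$ so that $\alpha_\ep(\bm u_\ep^m)\to\alpha_\ep(\bm u_\ep)$ by dominated convergence, and uniqueness from the monotonicity of $\alpha_\ep$ inherited from the convexity of $\rho_\ep$. The only (harmless) deviations are that you bound $\bm u_\ep^{m\prime}$ in $L^2(0,T;\bm V_\sigma')$ at the discrete level by testing with $P_m\bm v$ over a spectral basis, whereas the paper reads \eqref{eq:S-weak-e-b} off the limit equation, and that you make the Aubin--Lions compactness behind the strong trace convergence explicit where the paper merely cites the trace theorem.
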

%%%

%%%
\begin{proof}
The proof is divided into five steps. In (Step 1), we introduce the Galerkin approximation problem.  
The a-priori estimates of the approximated solutions are obtained in (Step 2). In (Step 3), we pass to the limit and show the existence of the weak solution.
(Step 4) is to demonstrate that $\bm{u}'_{\ep} \in L^2(0, T; \bm V_\sigma')$ and $\bm{u}_\ep(0) = \bm{u}_0$. We show the uniqueness in (Step 5). 

%% Step1
(Step 1) {\bf Galerkin's approximation problem}. 
Let $\{\bm{w}_k\}_{k=1}^\infty$ be a complete basis of the separable Hilbert space $\bm V_\sigma$, and define $\bm V^m_\sigma := \mathrm{span}\{\bm{w}_k\}_{k=1}^m$. 
For each $m $, we find the solution $\bm u_\ep^m= \sum_{i=1}^m c_k(t) \bm{w}_k \in \bm V^m_\sigma$ of the Galerkin approximation problem: 
\begin{subequations}\label{eq:S-VEem}
    \begin{align}
    (\bm u_\ep^{m \prime}, \bm{w}_k)+a_0(\bm u_\ep^m, \bm{w}_k)+\int_{\Gamma}g \alpha_\ep(\bm u_\ep^m)\cdot \bm{w}_k ~ds &= \langle \bm f, \bm{w}_k \rangle, \label{eq:S-VEem-a}\\
    \bm u_\ep^m(0)&= \bm u_0^m, \label{eq:S-VEem-b}
    \end{align}
\end{subequations}
where $\bm{u}_0^m := P_m \bm{u}_0$ and $P_m$ is the $L^2$-projection of $\bm H_\sigma$ onto $\bm V^m_\sigma$ satisfying  
\begin{equation}\label{eq:S-u0}
\|\bm{u}_0^m\| \le \|\bm{u}_0\|, \quad  \bm{u}_0^m \to \bm{u}_0 \text{ in } \bm H_\sigma.
\end{equation} 
Notice that \eqref{eq:S-VEem} is a nonlinear ODE system of $\{c_k(t)\}_{k=1}^m$. 
By the standard theory of ODEs, there exists a $T_1>0$ such that \eqref{eq:S-VEem} admits unique solutions $c_k(t)\in C^2([0, T_1])~(k=1,\ldots,m)$. 
The a-priori estimate in the next step indicates that $T_1$ can be replaced by $T$.  

%%% Step2
(Step 2) {\bf A-priori estimate}.
Multiplying \eqref{eq:S-VEem-a} by $c_k(t)$ and summing for $k=1,\ldots m$, we obtain
\[
(\bm{u}_\ep^{m\prime},\bm{u}_\ep^m)+a_0(\bm{u}_\ep^m,\bm{u}_\ep^m) +\int_\Gamma g\alpha_\ep(\bm{u}_\ep^m)\cdot\bm{u}_\ep^m ~ds=\langle \bm f,\bm{u}_\ep^m \rangle, 
\]
which yields (by \eqref{eq:a0} and \eqref{eq:alpha_ep})
\begin{equation}\label{eq:S-est-0}
\frac{1}{2}\frac{d}{dt}\|\bm{u}_\ep^m\| + c \|\bm{u}_\ep^m\|_{\bm H^1}^2 
\leq \langle \bm f,\bm{u}_\ep^m \rangle \leq \frac{1}{2c} \|\bm f\|_{\bm V'}^2 + \frac{c}{2} \|\bm{u}_\ep^m\|_{\bm V_\sigma}^2.
\end{equation}
Integrating the above equation w.r.t. $t$, we obtain the estimate
\begin{equation}\label{eq:S-est-I}
  \sup_{t \in [0, T]}\|\bm{u}_\ep^m(t)\|^2 + c \|\bm{u}_\ep^m \|_{L^2(0, T; \bm V_\sigma)}^2 
  \leq C \| \bm f \|_{L^2(0, T; \bm V')}^2 + \|\bm{u}_0\|^2,    
\end{equation}
which implies the $L^\infty(0, T; \bm L^2)$ and $L^2(0, T; \bm H^1)$ norms of $\bm{u}_\ep^m$ are bounded independent of $\ep$ and $m$. 

%%% Step3
(Step 3) {\bf Passing to the limit for $m \to \infty$}.
Since $\bigcup_{m=1}^\infty \bm V^m_\sigma$ is dense in $\bm V_\sigma$, we can find a subsequence of $\{\bm{u}_\ep^m\}_m$ (denoted by ${\bm{u}_\ep^m}$ for briefness) such that, as $m \to \infty$,  
\begin{equation}\label{eq:S-est-2}
  \begin{aligned}
    &\bm{u}_{\ep}^m  \rightharpoonup \bm{u}_{\ep} \quad & \text{ in } L^2(0, T; \bm V_\sigma), \\
    &\bm{u}_{\ep}^m \stackrel{*}{\rightharpoonup} \bm{u}_{\ep} \quad & \text{ in } L^\infty(0, T; \bm H_\sigma). 
  \end{aligned}
\end{equation}
The estimate \eqref{eq:S-weak-e-a} follows from \eqref{eq:S-est-I}. 
By the trace theorem (\cite[Theorem II.6.2]{Necas12}), we have 
\[
  \bm{u}_{\ep}^m \rightarrow \bm{u}_{\ep} \quad\ text{ in } L^2((0, T)\times \Gamma)^d, 
\]
which implies $\bm{u}_{\ep}^m \rightarrow \bm{u}_{\ep}$ a.e. on $(0, T) \times \Gamma$. Together with the continuity of $\alpha_\ep(\cdot)$, we have
\begin{equation}\label{eq:alpha}
\alpha_\ep(\bm{u}_{\ep}^m) \rightarrow \alpha_\ep(\bm{u}_{\ep}) \text{ a.e. on } (0, T) \times \Gamma.
\end{equation}

For any $\psi \in C^1([0, T])$ with $\psi(T)=0$, multiplying \eqref{eq:S-VEem-a} by $\psi(t)$ and using the integration by parts, we get
\[
  \begin{aligned}
    &- \int_0^T(\bm{u}_\ep^m,\psi'(t)\bm{w}_k)~dt - (\bm{u}_0^m,\psi(0) \bm{w}_k)+\int_0^Ta_0(\bm{u}_\ep^m ,\psi(t)\bm{w}_k)~dt  \\ 
    &+ \int_0^T\int_\Gamma g\alpha_\ep(\bm{u}_\ep^m)\cdot\psi(t)\bm{w}_k ~ds dt =\int_0^T\langle \bm{f},\psi(t)\bm{w}_k \rangle ~dt.
\end{aligned}
\]

Passing to the limit for $m \rightarrow \infty$, using \eqref{eq:S-u0}, \eqref{eq:S-est-2}, ~\eqref{eq:alpha} and Lebesgue's dominated convergence theorem, we find the limit 
\[
  \begin{aligned}
    &-\int_0^T(\bm{u}_\ep,\psi'(t)\bm{w}_k)~dt - (\bm{u}_0,\psi(0) \bm{w}_k)+\int_0^Ta_0(\bm{u}_\ep ,\psi(t)\bm{w}_k)~dt  \\ 
    &+\int_0^T\int_\Gamma g\alpha_\ep(\bm{u}_\ep)\cdot\psi(t)\bm{w}_k ~ds dt =\int_0^T\langle \bm{f},\psi(t)\bm{w}_k \rangle ~dt.  
\end{aligned}
\]
Since $\bigcup_{m=1}^\infty \bm V^m_\sigma$ is dense in $\bm V_\sigma$, we have 
\begin{equation}\label{eq:S-est-3}
  \begin{aligned}
    &-\int_0^T(\bm{u}_\ep,\bm{v})\psi'(t)~dt - (\bm{u}_0, \bm{v})\psi(0) +\int_0^Ta_0(\bm{u}_\ep ,\bm{v}) \psi(t)~dt  \\ 
    &+\int_0^T\int_\Gamma g\alpha_\ep(\bm{u}_\ep)\cdot\bm{v}\psi(t) ~ds dt =\int_0^T\langle \bm{f},\bm{v}\rangle  \psi(t)~dt \quad \forall \bm{v} \in \bm V_\sigma.  
\end{aligned}
\end{equation}
%

%%% Step4
(Step 4) According to \cite[Lemma III.1.1]{Temam77}, we define $\bm{u}_\ep'$ in the weak sense: for any $t \in (0, T]$ and $\psi \in C_0^\infty((0, T))$, 
\[
\int_0^T  \langle \bm{u}_\ep'(t), \bm{v} \rangle \psi(t) ~dt = \int_0^T \frac{d}{dt} ( \bm{u}_\ep(t), \bm{v} ) \psi(t) ~dt -\int_0^T (\bm{u}_\ep(t),\bm{v})\psi'(t)~dt \quad \forall \bm v \in \bm V_\sigma. 
\]
Therefore, we have $\bm{u}_\ep' \in L^2(0, T; \bm V_\sigma')$ from 
\begin{equation}\label{eq:S-est-4}
  \begin{aligned}
    & \int_0^T \big(\langle \bm{u}_\ep',\bm{v}\rangle + a_0(\bm{u}_\ep , \bm{v}) + \int_\Gamma g \alpha_\ep(\bm{u}_\ep)\cdot\bm{v} - \langle \bm{f}, \bm{v}\rangle \big) \psi(t)~dt = 0 \quad \forall \bm{v} \in \bm V_\sigma,   
\end{aligned}
\end{equation}
which is \eqref{eq:S-VEe} in the weak sense. The estimate \eqref{eq:S-weak-e-b} follows from \eqref{eq:S-est-4} and \eqref{eq:S-weak-e-a}. 

It remains to verify the initial condition $\bm{u}_\ep(0)=\bm{u}_0$. 
Multiplying \eqref{eq:S-VEe} by $\psi \in C^1([0, T])$ with $\psi(T) = 0$ and integrating over $[0, T]$, together with integration by parts, we get 
\begin{equation}\label{eq:S-est-5}
  \begin{aligned}
    &-\int_0^T(\bm{u}_\ep,\bm{v}) \psi'(t)~dt - (\bm{u}_\ep(0),\bm v) \psi(0)+\int_0^Ta_0(\bm{u}_\ep ,\bm{v})\psi(t) ~dt  \\ 
    &+\int_0^T\int_\Gamma g\alpha_\ep(\bm{u}_\ep)\cdot\bm{v}\psi(t) ~dx dt =\int_0^T\langle \bm f,\bm{v} \rangle \psi(t)~dt  \quad \forall \bm{v} \in \bm V_\sigma.
  \end{aligned}
\end{equation}
Subtracting \eqref{eq:S-est-5} from \eqref{eq:S-est-3}, we get
\[ 
(\bm{u}_\ep(0)-\bm{u}_0,\bm{v}) \psi(0)=0 \quad  \forall \bm{v}\in \bm V_\sigma, 
\]
which says $\bm{u}_\ep(0)=\bm{u}_0$. 
Hence, $\bm{u}_\ep$ is a weak solution of {\bf(S-VE)$_{\sigma,\ep}^w$}. 

%%% Step5
(Step 5) {\bf Uniqueness.} Suppose there are two solutions $\bm{u}_\ep^1$ and $\bm{u}_\ep^2$ of {\bf(S-VE)$_{\sigma,\ep}^w$}. We find that: $\bm{u}^1_\ep(0) - \bm{u}^2_\ep(0) = 0$, and 
\[
  \frac{d}{dt} (\bm{u}^1_\ep - \bm{u}^2_\ep, \bm{v}) +a_0(\bm{u}^1_\ep - \bm{u}^2_\ep ,\bm{v})+\int_{\Gamma}g (\alpha_\ep(\bm{u}^1_\ep) - \alpha_\ep(\bm{u}^2_\ep))\cdot \bm{v}~ds = 0 \quad \forall \bm{v}  \in \bm V_\sigma.
\]
Substituting $\bm{v} = \bm{u}^1_\ep - \bm{u}^2_\ep$ into the above equation and integrating over $[0, T]$, we observe that (by using \eqref{eq:a0} and the convexity of $\alpha_\ep(\bm{z})$): 
\[
\frac{1}{2}\big( \|\bm{u}^1_\ep(t) - \bm{u}^2_\ep(t)\|_{L^2}^2 - \|\bm{u}^1_\ep(0) - \bm{u}^2_\ep(0)\|_{L^2}^2 \big) + \int_0^t c \| \bm{u}^1_\ep(\tau) - \bm{u}^2_\ep(\tau) \|_{H^1}^2~d\tau  \le 0,  
\]
which yields $\bm{u}^1_\ep = \bm{u}^2_\ep$. 
\end{proof}
%%%

%%%%
Next, we demonstrate the existence of {\bf(S-VI)$_\sigma^w$} by the passage to the limit for $\ep \rightarrow 0$. 
%%% thm VI
\begin{theorem}\label{th:S-wp-u}
Under the assumptions of \eqref{eq:ass-S}, there exists a unique weak solution $\bm u \in L^2(0, T; \bm V_\sigma )\cap L^\infty(0, T; \bm H_\sigma)$ of {\bf(S-VI)$_\sigma^w$} with the estimate 
\begin{subequations}\label{eq:S-weak}
\begin{align}
& \| \bm u \|_{L^\infty(0, T; \bm H_\sigma)} + \| \bm u \|_{L^2(0, T; \bm V_\sigma)} \le C(\|\bm f\|_{L^2(0, T; \bm V')} + \|\bm u_0\|), \label{eq:S-weak-a} \\
& \| \bm u' \|_{L^2(0, T; \bm V_\sigma')} \le C(\|\bm f\|_{L^2(0, T; \bm V')} + \|g\|_{L^2(0, T; L^2(\Gamma))} + \|\bm u_0\|) \label{eq:S-weak-b}
\end{align}
\end{subequations}
\end{theorem}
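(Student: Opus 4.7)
The plan is to obtain the solution of {\bf(S-VI)$_\sigma^w$} as the limit of the regularized solutions $\bm u_\ep$ from Theorem~\ref{th:S-wp-ue} as $\ep \to 0$. The uniform bounds \eqref{eq:S-weak-e} provide a subsequence (still denoted $\bm u_\ep$) such that $\bm u_\ep \rightharpoonup \bm u$ weakly in $L^2(0,T;\bm V_\sigma)$, $\bm u_\ep \stackrel{*}{\rightharpoonup} \bm u$ weakly-$\ast$ in $L^\infty(0,T;\bm H_\sigma)$, and $\bm u_\ep' \rightharpoonup \bm u'$ weakly in $L^2(0,T;\bm V_\sigma')$. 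The Aubin-Lions lemma applied with a compact embedding $\bm V_\sigma \hookrightarrow\hookrightarrow \bm H^s(\Omega)^d \hookrightarrow \bm V_\sigma'$ for some $s \in (1/2,1)$, combined with the continuity of the trace $\bm H^s(\Omega)^d \to \bm L^2(\Gamma)$, then yields strong convergence $\bm u_\ep \to \bm u$ in $L^2((0,T)\times\Gamma)^d$, so also almost everywhere on $\Gamma \times (0,T)$ along a further subsequence. The estimates \eqref{eq:S-weak-a}--\eqref{eq:S-weak-b} for the limit follow immediately from those on $\bm u_\ep$ and weak (lower semi-)continuity of the norms.

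Next, I would convert the regularized equation into a variational inequality via the convexity of $\rho_\ep$: the inequality $\rho_\ep(\bm v) - \rho_\ep(\bm u_\ep) \ge \alpha_\ep(\bm u_\ep) \cdot (\bm v - \bm u_\ep)$, multiplied by $g\ge 0$ and integrated over $\Gamma$, gives $j_\ep(\bm v) - j_\ep(\bm u_\ep) \ge \int_\Gamma g\,\alpha_\ep(\bm u_\ep)\cdot(\bm v - \bm u_\ep)\,ds$. Substituting this into \eqref{eq:S-VEe} tested with $\bm v - \bm u_\ep$ and integrating over $(0,T)$ produces, for every $\bm v \in L^2(0,T;\bm V_\sigma)$,
\[
\int_0^T \Bigl[\langle \bm u_\ep',\bm v-\bm u_\ep\rangle + a_0(\bm u_\ep,\bm v-\bm u_\ep) + j_\ep(\bm v) - j_\ep(\bm u_\ep)\Bigr]\,dt \;\ge\; \int_0^T \langle \bm f,\bm v-\bm u_\ep\rangle\,dt.
\]

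The passage $\ep\to 0$ is then handled term by term. The linear terms $\int \langle \bm u_\ep',\bm v\rangle\,dt$ and $\int a_0(\bm u_\ep,\bm v)\,dt$ converge by weak convergence, while $\int \langle \bm u_\ep',\bm u_\ep\rangle\,dt = \tfrac12\|\bm u_\ep(T)\|^2 - \tfrac12\|\bm u_0\|^2$ and $\int a_0(\bm u_\ep,\bm u_\ep)\,dt$ satisfy $\liminf \ge$ the corresponding limit expressions (using weak lower semicontinuity and the weak convergence of $\bm u_\ep(T)$ in $\bm H_\sigma$ obtained from $\bm u_\ep \in C_w([0,T];\bm H_\sigma)$). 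The bound $|\rho_\ep(\bm z)-|\bm z||\le \ep$ together with $g\in L^2$ gives $j_\ep(\bm v)\to j(\bm v)$, and the same bound plus the strong convergence in $L^2((0,T)\times\Gamma)$ gives $\liminf j_\ep(\bm u_\ep) \ge j(\bm u)$. Combining these inequalities correctly (so that $\limsup$'s appear with the right sign) yields \eqref{eq:S-VI-sigw'}, which is equivalent to \eqref{eq:S-VI-sigw} since $\bm u'\in L^2(0,T;\bm V_\sigma')$. The initial condition $\bm u(0)=\bm u_0$ is recovered by the same integration-by-parts argument as in Step~4 of Theorem~\ref{th:S-wp-ue}.

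For uniqueness, given two weak solutions $\bm u^1,\bm u^2$, I would test the VI for $\bm u^1$ with a time-truncated version of $\bm u^2$ (equal to $\bm u^2$ on $[0,t_0]$ and to $\bm u^1$ on $(t_0,T]$, which lies in $L^2(0,T;\bm V_\sigma)$) and vice versa. Adding the resulting inequalities, the $j$-terms and $\bm f$-terms cancel pairwise and the initial condition $\bm u^1(0)=\bm u^2(0)=\bm u_0$ yields
\[
-\tfrac{1}{2}\|\bm u^1(t_0)-\bm u^2(t_0)\|^2 - \int_0^{t_0} a_0(\bm u^1-\bm u^2,\bm u^1-\bm u^2)\,dt \;\ge\; 0
\]
for every $t_0\in(0,T]$, forcing $\bm u^1 \equiv \bm u^2$ by $a_0\ge 0$. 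The main technical obstacle is the passage to the limit in the nonsmooth functional $j_\ep(\bm u_\ep)$; this is precisely where the strong trace convergence (hence Aubin--Lions, hence the $L^2(0,T;\bm V_\sigma')$ bound on $\bm u_\ep'$, which in turn requires $g\in L^2$) is indispensable, and also why care must be taken to use $\liminf$/$\limsup$ correctly for the $-\tfrac12\|\bm u_\ep(T)\|^2$ and $-j_\ep(\bm u_\ep)$ contributions rather than a naive passage to the limit.
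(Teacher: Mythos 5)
Your existence argument follows essentially the same route as the paper: extract weakly convergent subsequences from the uniform bounds of Theorem~\ref{th:S-wp-ue}, obtain strong convergence of the traces in $L^2((0,T)\times\Gamma)^d$, integrate the regularized variational inequality in time, and pass to the limit using monotonicity of $a_0$, the sign of the $\int_0^T\langle \bm u_\ep'-\bm u',\bm u-\bm u_\ep\rangle\,dt$ term, and the bound $|\rho_\ep(\bm z)-|\bm z||\le\ep$ for the $j_\ep$ terms; your handling of $j_\ep(\bm u_\ep)$ via a $\liminf$ is slightly weaker than the paper's proof that $\int_0^T j_\ep(t;\bm u_\ep)-j(t;\bm u)\,dt\to 0$, but it suffices since $-j_\ep(\bm u_\ep)$ enters with the favourable sign. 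Two points where you go beyond the written proof are worth noting. First, you justify the strong trace convergence via Aubin--Lions with an intermediate space $\bm H^s(\Omega)^d$, $s\in(1/2,1)$, whereas the paper simply invokes a trace/compactness theorem; your route makes explicit that the $L^2(0,T;\bm V_\sigma')$ bound on $\bm u_\ep'$ (hence $g\in L^2$) is what drives the compactness. Second, the theorem asserts uniqueness but the paper's proof of Theorem~\ref{th:S-wp-u} establishes only existence; your monotonicity argument --- testing each inequality with a time-truncated copy of the other solution so that the $j$- and $\bm f$-terms cancel and Gronwall-free coercivity of $a_0$ forces $\bm u^1\equiv\bm u^2$ --- is the standard way to fill that gap and is correct as stated.
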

%%%

%%%
\begin{proof}
Since \eqref{eq:S-weak-e}, there exists a subsequence $\{ \bm{u}_\ep \}_\ep$ such that, as $\ep \rightarrow 0$,  
\begin{subequations}\label{eq:prf-S-wp-1}
  \begin{align}
    &\bm{u}_{\ep} \rightharpoonup  \bm{u}\quad & \text{ in } L^2(0, T; \bm V_\sigma), \label{eq:prf-S-wp-1-a}\\
    &\bm{u}_{\ep} \stackrel{*}{\rightharpoonup}  \bm{u}\quad & \text{ in } L^\infty(0, T; \bm H_\sigma), \label{eq:prf-S-wp-1-b}\\
    &\bm{u}'_{\ep} \rightharpoonup  \bm{u}'\quad & \text{ in } L^2 (0, T; \bm V_\sigma'), \label{eq:prf-S-wp-1-c}
  \end{align}
\end{subequations}
and $\bm{u}(0) = \bm{u}_0$ because of $\bm{u}_\ep(0) = \bm{u}_0$. The estimate \eqref{eq:S-weak} follows directly from \eqref{eq:S-weak-e}. 
We still have 
\begin{equation}
\bm{u}_{\ep} \rightarrow \bm{u} \quad \text{ in }  L^2((0, T)\times\Gamma)^d. \label{eq:L2} 
\end{equation}
Thus we have $\bm{u}_{\ep} \rightarrow \bm{u}$ a.e. on $(0, T)\times\Gamma$. Furthermore, by the continuity of $\rho_\ep(\bm{z})$ and \eqref{eq:rho-err}, we have
\[
\rho_\ep(\bm{u}_{\ep}) \rightarrow |\bm{u}| \text{ a.e. on } (0, T)\times\Gamma. 
\]

For any $\bm{v} \in L^2(0, T; \bm V_\sigma)$, integrating \eqref{eq:S-VIe} over $[0, T]$, we get
\begin{equation}\label{eq:S-est-7}
  \int_0^T\big( \langle \bm{u}_\ep', \bm{v}-\bm{u}_\ep\rangle  + a_0(\bm{u}_\ep ,\bm{v}-\bm{u}_\ep) + j_\ep (t;\bm{v})-j_\ep (t;\bm{u}_\ep )\big)~dt \geq \int_0^T \langle \bm{f},\bm{v}-\bm{u}_\ep\rangle~dt. 
\end{equation}
The first term of the left-hand side is decomposed into three terms:
\[
\begin{aligned}
  \int_0^T \langle \bm{u}_\ep', \bm{v}-\bm{u}_\ep\rangle ~dt = & \int_0^T  \langle \bm{u}_\ep' - \bm{u}', \bm{u}-\bm{u}_\ep \rangle ~dt  + \int_0^T \langle \bm{u}_\ep' - \bm{u}', \bm{v}-\bm{u}\rangle~dt \\ 
  & + \int_0^T\langle \bm{u}', \bm{v}-\bm{u}_\ep\rangle ~dt =: I_{1\ep} + I_{2\ep} + I_{3\ep}. 
\end{aligned}
\]
By \eqref{eq:prf-S-wp-1-a} and \eqref{eq:prf-S-wp-1-c}, we have 
\[
\lim_{\ep\rightarrow 0} I_{2\ep} = 0, \quad \lim_{\ep\rightarrow 0}I_{3\ep} = \int_0^T \langle \bm{u}', \bm{v}-\bm{u}\rangle ~dt.
\]
In view of $ I_{1\ep} = -\frac{1}{2} \| \bm{u}(T)-\bm{u}_\ep(T) \|_{\bm L^2}^2 \le 0$, we get
\[
\lim_{\ep\rightarrow 0} \int_0^T \langle \bm{u}_\ep' , \bm{v}-\bm{u}_\ep\rangle ~dt \le \int_0^T \langle \bm{u}', \bm{v}-\bm{u}\rangle ~dt.
\]
The second term on the left-hand side of \eqref{eq:S-est-7} is calculated as follows:
\[
\begin{aligned}
\lim_{\ep\rightarrow 0} \int_0^T a_0(\bm{u}_\ep ,\bm{v}-\bm{u}_\ep)~dt & =  \lim_{\ep\rightarrow 0} \int_0^T a_0(\bm{u}_\ep-\bm{u} ,\bm{u}-\bm{u}_\ep) + a_0(\bm{u}_\ep-\bm{u} ,\bm{v}-\bm{u}) + a_0(\bm{u} ,\bm{v}-\bm{u}_\ep)~dt \\
& \le  \int_0^T a_0(\bm{u} ,\bm{v}-\bm{u})~dt \quad (\text{ by } a_0(\bm{u}_\ep-\bm{u} ,\bm{u}-\bm{u}_\ep) \le 0 \text{ and }\eqref{eq:prf-S-wp-1-a})
\end{aligned}
\]
By \eqref{eq:rho-err} and \eqref{eq:L2}, we see that, as $\ep \to 0$,
\[
\begin{aligned}
  &\left| \int_0^T j_\ep(t; \bm{v}) - j(t; \bm{v})~dt \right| \le \int_0^T \int_\Gamma g\big| \rho_\ep(\bm{v}) - |\bm{v}|\big|~dsdt \le \ep \|g\|_{L^1(0, T;L^1(\Gamma))} \to 0,\\ 
  & \begin{aligned} \left|\int_0^T j_\ep(t; \bm{u}_\ep) - j(t;\bm{u}) ~dt \right| & \le  \int_0^T |j_\ep(t;\bm{u}_\ep) - j(t;\bm{u}_\ep)| ~dt + \int_0^T \int_\Gamma \big|g(|\bm{u}_\ep| - |\bm{u}|)\big| ~dsdt\\ 
  & \le \ep \|g\|_{L^1(0, T;L^1(\Gamma))}  + \|g\|_{L^2(0, T;L^2(\Gamma))}\|\bm{u}_\ep -\bm{u}\|_{L^2(0, T; \bm \Lambda)}  \to 0.
  \end{aligned} 
\end{aligned}
\]  
Then, by the triangle inequality, we have 
\[
\lim_{\ep\rightarrow 0} \int_0^T j_\ep (t; \bm{v})-j_\ep (t; \bm{u}_\ep )~dt = \int_0^T j (t; \bm{v})-j (t; \bm{u} )~dt. 
\]
Hence, the limit as $\ep \to 0$ in \eqref{eq:S-est-7} combined with Lebesgue's convergence theorem yields \eqref{eq:S-VI-sigw'}.
\end{proof}
%%%%

% 添加remark说明 u=0 on Gamma
%%%
\begin{remark}\label{rk:S-Dirichlet}
Given sufficiently smooth $\bm f$ and $ \bm u_0$, we can find a large enough threshold function $g$ which ensures  $\bm u = \bm 0$ on $\Gamma$. A brief proof of this result is as follows. 

First, we consider the Stokes equations  in $\Omega_{\mathrm{in}}$ and $\Omega_{\mathrm{out}}$ with the Dirichlet B.C., respectively. 
According to classical regularity results of the Dirichlet problem, the solutions denoted by $(\bar{\bm u}, \bar{\bm p})|_{\Omega_{\mathrm{in}}}$ and $(\bar{\bm u}, \bar{\bm p})|_{\Omega_{\mathrm{out}}}$ are sufficiently smooth for given $\bm f$ and $\bm u_0$ satisfying the suitable regularity assumptions.
For  $(\bar{\bm u}, \bar{\bm p})$ in $\Omega_{\mathrm{in}}$ and $\Omega_{\mathrm{out}}$, we can define stress vector on $\Gamma$, denoted by $\bar {\bm \sigma}_{\mathrm{in}}\bm n$ and $\bar {\bm \sigma}_{\mathrm{out}}\bm n$, respectively.
Now for {\bf{(S-P)}}, let's take 
\[
g > |\bar {\bm \sigma}_{\mathrm{out}}\bm n- \bar {\bm \sigma}_{\mathrm{in}}\bm n|,\quad  \text{ a.e. on } (0,T)\times \Gamma.
\] 
% Under the same $\bm f$ and $\bm u_0$, we must have the solution $\bm u$ of the FIC problem satisfies $\bm u = 0$ on $\Gamma$ because the Dirichlet solution also solves {\bf{(S-P)}}  in which uniqueness holds.
Given the same $\bm f$ and $\bm u_0$, the solution $\bm u$ of the FIC problem must satisfy $\bm u = 0$ on $\Gamma$ because the Dirichlet solution also solves {\bf{(S-P)}} where uniqueness holds.
\end{remark}
%%%

%%%% S-Strong sol.
\begin{theorem}\label{th:S-wp-II} 
Assume that $\bm{f} \in L^2(0, T; \bm L^2(\Omega))$, $g \in H^1(0, T; L^2(\Gamma))$ with $g(0) \in L^2(\Gamma)$ and $\bm{u}_0 \in \bm V_\sigma$. 
Then the solution of {\bf(S-VE)$_{\sigma,\ep}$} satisfies $\bm{u}_\ep \in L^\infty(0, T; \bm V_\sigma) \cap H^1(0, T; \bm H_\sigma)$ with the estimate
\begin{equation}\label{eq:S-wp-II-1}
\begin{aligned}
& \| \bm{u}_\ep' \|_{L^2(0, T; \bm H_\sigma)} + \| \bm{u}_\ep \|_{L^\infty(0, T; \bm V_\sigma)} \le  C\Big(\| \bm{f} \|_{L^2(0, T; \bm L^2)} + \|\bm{u}_0\|_{\bm V_\sigma} + \|g\|_{H^1(0, T; L^2(\Gamma))} \\ 
&\qquad  + \|g(0)\|_{L^2(\Gamma)} + \ep^{\frac{1}{2}} (\|g(0)\|_{L^1(\Gamma)}^{\frac{1}{2}}  + \|g\|_{L^\infty(0, T;L^1(\Gamma))}^{\frac{1}{2}}  + \|g'\|_{L^1(0, T;L^1(\Gamma))}^{\frac{1}{2}}) \Big).
\end{aligned}
\end{equation}
The same regularity and a-priori estimate also hold for $\bm{u}$ of {\bf(S-VI)$_{\sigma}$}. 
\end{theorem}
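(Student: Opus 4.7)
The plan is to upgrade the energy estimate of Theorem \ref{th:S-wp-ue} by testing the Galerkin problem \eqref{eq:S-VEem-a} with $\bm v=\bm u_\ep^{m\prime}$ (which lies in $\bm V_\sigma^m$) and then passing successively to the limits $m\to\infty$ and $\ep\to 0$. Before starting, I would replace the $L^2$-orthogonal projection used in (Step~1) of the proof of Theorem \ref{th:S-wp-ue} by a projection orthogonal in $\bm V_\sigma$ (for instance by choosing $\{\bm w_k\}$ to be the eigenfunctions of the Stokes operator associated with the problem) so that the initial datum $\bm u_\ep^m(0)=P_m\bm u_0$ satisfies $\|\bm u_\ep^m(0)\|_{\bm V_\sigma}\le\|\bm u_0\|_{\bm V_\sigma}$ uniformly in $m$. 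This is the delicate point of the whole proof: without it one cannot convert the strengthened hypothesis $\bm u_0\in\bm V_\sigma$ into a clean initial control of $a_0(\bm u_\ep^m(0),\bm u_\ep^m(0))$.

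The computation rests on two identities. First, by symmetry of $a_0$, $a_0(\bm u_\ep^m,\bm u_\ep^{m\prime})=\tfrac12\tfrac{d}{dt}a_0(\bm u_\ep^m,\bm u_\ep^m)$. Second, since $\alpha_\ep=\nabla\rho_\ep$, the pointwise chain rule gives $\alpha_\ep(\bm u_\ep^m)\cdot\bm u_\ep^{m\prime}=\tfrac{d}{dt}\rho_\ep(\bm u_\ep^m)$, so that integration by parts in time on $(0,t)$ yields
\[
\int_0^t \int_\Gamma g\,\alpha_\ep(\bm u_\ep^m)\cdot\bm u_\ep^{m\prime}\,ds\,d\tau=\int_\Gamma g(t)\rho_\ep(\bm u_\ep^m(t))\,ds-\int_\Gamma g(0)\rho_\ep(\bm u_\ep^m(0))\,ds-\int_0^t \int_\Gamma g'\,\rho_\ep(\bm u_\ep^m)\,ds\,d\tau.
\]
Combining with \eqref{eq:a0} and keeping $\int_\Gamma g(t)\rho_\ep(\bm u_\ep^m(t))\,ds$ on the left-hand side, one obtains
\[
\int_0^t\|\bm u_\ep^{m\prime}\|^2\,d\tau+\tfrac{c_0}{2}\|\nabla\bm u_\ep^m(t)\|^2+\int_\Gamma g(t)\rho_\ep(\bm u_\ep^m(t))\,ds \le \tfrac12 a_0(\bm u_\ep^m(0),\bm u_\ep^m(0))+\int_\Gamma g(0)\rho_\ep(\bm u_\ep^m(0))\,ds+\int_0^t(\bm f,\bm u_\ep^{m\prime})\,d\tau+\int_0^t \int_\Gamma g'\rho_\ep(\bm u_\ep^m)\,ds\,d\tau.
\]

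The rest is term-by-term bookkeeping, in which the elementary inequality $\rho_\ep(\bm z)\le|\bm z|+\ep$ (and, where needed, its lower version $\rho_\ep(\bm z)\ge|\bm z|-\ep$ applied to the LHS boundary term) generates every $\ep$-residual appearing in \eqref{eq:S-wp-II-1}. The forcing $(\bm f,\bm u_\ep^{m\prime})$ is absorbed by Young's inequality into the $\|\bm u_\ep^{m\prime}\|_{L^2(0,T;\bm L^2)}$-term on the LHS, producing $\|\bm f\|_{L^2(0,T;\bm L^2)}^2$. The boundary integrals at $t=0$ and those involving $g'$ split into a $|\bm u_\ep^m|$-part (bounded by the trace theorem and Cauchy--Schwarz, using the $L^2(0,T;\bm V_\sigma)$-bound of Theorem \ref{th:S-wp-ue} for the $g'$ contribution) and an $\ep$-part producing $\ep\|g(0)\|_{L^1(\Gamma)}$, $\ep\|g\|_{L^\infty(0,T;L^1(\Gamma))}$ and $\ep\|g'\|_{L^1(0,T;L^1(\Gamma))}$. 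Taking $\sup_{t\in[0,T]}$ and a square root converts the additive $\mathcal{O}(\ep)$ residuals into the $\mathcal{O}(\ep^{1/2})$ coefficients displayed in \eqref{eq:S-wp-II-1}. Finally, weak/weak-$*$ compactness in $H^1(0,T;\bm H_\sigma)\cap L^\infty(0,T;\bm V_\sigma)$ transfers the bound to $\bm u_\ep$ by lower semicontinuity as $m\to\infty$, and letting $\ep\to 0$ removes the $\ep^{1/2}$-terms to yield the same estimate for the unique solution $\bm u$ of Theorem \ref{th:S-wp-u}. The main technical obstacle is the initial-data step described in the first paragraph.
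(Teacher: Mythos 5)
Your proposal follows essentially the same route as the paper: test the Galerkin system with $\bm u_\ep^{m\prime}$, use $a_0(\bm u_\ep^m,\bm u_\ep^{m\prime})=\tfrac12\tfrac{d}{dt}a_0(\bm u_\ep^m,\bm u_\ep^m)$ and $\alpha_\ep(\bm u_\ep^m)\cdot\bm u_\ep^{m\prime}=(\rho_\ep(\bm u_\ep^m))'$ to integrate the interface term by parts in time, split each boundary integral into a $(\rho_\ep-|\cdot|)$-part of size $\ep$ and a trace-controlled part, and take $\bm u_0^m\to\bm u_0$ in $\bm V_\sigma$ to control the initial energy, exactly as the paper does. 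The only cosmetic differences are that you keep the nonnegative term $\int_\Gamma g(t)\rho_\ep(\bm u_\ep^m(t))\,ds$ on the left rather than bounding its absolute value, and you make explicit a $\bm V_\sigma$-orthogonal choice of basis where the paper simply requires $\bm u_0^m\to\bm u_0$ in $\bm V_\sigma$; neither changes the argument.
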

%%%%

%%%%
\begin{proof}
Same as the proof of Theorem~\ref{th:S-wp-ue}, we employ the Galerkin approximation method. 
And the conclusion of Theorem~\ref{th:S-wp-ue} also holds. In addition, we take the initial value that $\bm{u}_0^m \rightarrow \bm{u}_0$ in $\bm V_\sigma$.  
Let's continue with (Step 2) of the proof for Theorem~\ref{th:S-wp-ue}.

Multiplying \eqref{eq:S-VEem-a} with $c_k'(t)$ and summing up w.r.t. $k$, we obtain  
\begin{equation}\label{eq:prf-S-wp-II-0}
(\bm{u}_\ep^{m \prime}, \bm u_\ep^{m \prime}) + a_0(\bm u_\ep^m, \bm u_\ep^{m \prime}) +\int_{\Gamma}g(t) \alpha_\ep(\bm u_\ep^m)\cdot \bm{u}_\ep^{m \prime}~ds \\ 
 = ( \bm{f}, \bm{u}_\ep^{m \prime})  \le \frac{1}{2}\|\bm{f}\|^2 + \frac{1}{2}\| \bm{u}_\ep^{m \prime}\|^2.
\end{equation}
Integrating the above equation over $[0, T]$ and in view of 
$ a_0(\bm u_\ep^m, \bm u_\ep^{m \prime}) = \frac{1}{2}\frac{d}{dt}a_0(\bm{u}_\ep^m, \bm{u}_\ep^m)$, we get 
\begin{equation}\label{eq:prf-S-wp-II-1}
\begin{aligned}
 &\int_0^t\| \bm u_\ep^{m \prime}(\tau) \|^2~d\tau + \big( a_0(\bm{u}_\ep^m(t), \bm{u}_\ep^m(t)) - a_0(\bm{u}_0^m, \bm{u}_0^m) \big) \\
 &\qquad + 2 \int_0^t \int_{\Gamma} g(\tau) \alpha_\ep(\bm u_\ep^m(\tau))\cdot \bm{u}_\ep^{m \prime}(\tau)~ds d\tau  \le \int_0^t \|\bm{f}(\tau)\|^2~d\tau.
\end{aligned}
\end{equation}
In view of $(\rho_\ep(\bm{u}_\ep^m))' = \alpha_\ep(\bm{u}_\ep^m) \cdot \bm{u}_\ep^{m \prime}$, we apply the integration by parts to calculate the third term of the left-hand side.
\[
\begin{aligned}
& \quad \bigg| \int_0^t \int_{\Gamma} g(\tau) \alpha_\ep(\bm u_\ep^m)\cdot \bm{u}_\ep^{m \prime}~ds d\tau \bigg|= \bigg| \int_0^t \int_{\Gamma} g(\tau)(\rho_\ep(\bm{u}_\ep^m))'~ds d\tau \bigg| \\ 
& = \bigg|- \int_0^t \int_{\Gamma} g'(\tau) \rho_\ep(\bm{u}_\ep^m)~ds d\tau + \int_{\Gamma} g(t) \rho_\ep(\bm{u}_\ep^m(t))~ds  - \int_{\Gamma} g(0) \rho_\ep(\bm{u}_\ep^m(0))~ds \bigg| \\
& \le \bigg| \int_0^t \int_{\Gamma} g' (\rho_\ep(\bm{u}_\ep^m) - |\bm{u}_\ep^m|)~ ds d\tau \bigg| + \bigg|\int_0^t \int_{\Gamma} g' |\bm{u}_\ep^m|~ds d\tau\bigg| + \bigg|\int_{\Gamma} g ( \rho_\ep(\bm{u}_\ep^m) - |\bm{u}_\ep^m|)~ds \bigg|  \\
& \qquad + \bigg|\int_{\Gamma} g |\bm{u}_\ep^m |~ds \bigg| +  \bigg|\int_{\Gamma} g(0) (\rho_\ep(\bm{u}_0^m) - |\bm{u}_0^m|)~ds \bigg| + \bigg| \int_{\Gamma} g(0) |\bm{u}_0^m|~ds\bigg| \\
& \le \ep \| g' \|_{L^1(0, T;L^1(\Gamma))} + \| g' \|_{L^2(0, T; L^2(\Gamma))} \| \bm{u}_\ep^m \|_{L^2(0, T; \bm \Lambda)} + \ep \| g(t) \|_{L^1(\Gamma)} \\
& \qquad  + \| g(t) \|_{L^2(\Gamma)} \| \bm{u}_\ep^m(t) \|_{\bm \Lambda} + \ep \| g(0) \|_{L^1(\Gamma)} + \| g(0) \|_{L^2(\Gamma)} \| \bm{u}_0^m \|_{\bm \Lambda}, 
\end{aligned}
\]
where we have utilized the triangle inequality and \eqref{eq:rho-err}.
Inserting the above estimate into \eqref{eq:prf-S-wp-II-1}, and in view of $H^1(0, T; \bm L^2(\Gamma)) \subset L^\infty(0, T; \bm L^1(\Gamma))$ and $\| \bm{u}_\ep^m \|_{\bm \Lambda} \le C \| \bm{u}_\ep^m \|_{\bm H^1}$, we obtain the following estimate of $\bm{u}_\ep^m$.   
\begin{equation}\label{eq:prf-S-wp-II-2}
  \begin{aligned}
  & \| \bm{u}_\ep^{m \prime} \|_{L^2(0, T; \bm H_\sigma)} + \| \bm{u}_\ep^m \|_{L^\infty(0, T; \bm V_\sigma)}  \le C\Big(\| \bm{f} \|_{L^2(0, T; \bm L^2)} + \|\bm{u}_0\|_{\bm V_\sigma} + \|g\|_{H^1(0, T;L^2(\Gamma))} \\
  & \qquad + \|g(0)\|_{L^2(\Gamma)} + \ep^{\frac{1}{2}} (\|g(0)\|_{L^1(\Gamma)}^{1/2}  + \|g\|_{L^\infty(0, T;L^1(\Gamma))}^{1/2}  + \|g'\|_{L^1(0, T;L^1(\Gamma))}^{1/2}) \Big).
  \end{aligned}
\end{equation}

In view of \eqref{eq:S-est-I} and \eqref{eq:prf-S-wp-II-2}, there is a subsequence $\{ \bm{u}_\ep^m \}_m$ such that, as $m \rightarrow \infty$,  
\[
  \begin{aligned}
    &\bm{u}_{\ep}^m \stackrel{*}{ \rightharpoonup}  \bm{u}_{\ep} \quad &\text{ in } L^\infty(0, T; \bm V_\sigma) ,\\
    &\bm{u}_{\ep}^{m\prime} \rightharpoonup \bm{u}_{\ep}' \quad &\text{ in } L^2(0, T; \bm H_\sigma). 
  \end{aligned}
\]
As demonstrated in Theorem~\ref{th:S-wp-ue} that $\bm{u}_{\ep}$ is the unique solution of {\bf(S-VE)$_{\sigma,\ep}$}. 
The a-priori estimate \eqref{eq:S-wp-II-1} is implied by \eqref{eq:prf-S-wp-II-2}. 

Then, we can extract a subsequence $\{ \bm{u}_{\ep} \}_\ep$ such that, as $\ep \rightarrow 0$, 
\[
  \begin{aligned}
    &\bm{u}_\ep \stackrel{*}{ \rightharpoonup}  \bm{u} \quad & \text{ in } L^\infty(0, T; \bm V_\sigma) ,\\
    &\bm{u}_\ep' \rightharpoonup \bm{u}' \quad &\text{ in } L^2(0, T; \bm H_\sigma). 
  \end{aligned}
\]
As with the proof of Theorem~\ref{th:S-wp-u}, the passage to the limit for $\ep \rightarrow 0$ shows that $\bm{u}$ solves {\bf(S-VI)$_\sigma$}. It is apparent that the estimate \eqref{eq:S-wp-II-1} also holds for $\bm{u}$. 
\end{proof}
%%%%

Assume that $g(0) \in H^1(\Gamma)$, $\bm h \in \bm L^2(\Omega)$ and the initial value $\bm{u}_0 \in \bm V_\sigma$ solve the stationary variational inequality
\begin{equation}\label{eq:VI_u0}
  a_0(\bm{u}_0,\bm{v}-\bm{u}_0)+\int_{\Gamma} g(0)|\bm{v}|ds -\int_{\Gamma}g(0)|\bm{u}_0|ds \geq (\bm h,\bm{v}-\bm{u}_0)\quad \forall \bm{v} \in \bm V_\sigma,
\end{equation}
We denoted by $\bm{u}_{0\ep} \in \bm V_\sigma$ the solution of the regularization problem: 
\begin{equation}\label{eq:VI_u0e}
  a_0(\bm{u}_{0\ep}, \bm{v}-\bm{u}_{0\ep}) +\int_{\Gamma}g(0)\rho_\ep (\bm{v})~ds- \int_{\Gamma} g(0)\rho_\ep (\bm{u}_{0\ep})~ds \geq (\bm h, \bm{v}-\bm{u}_{0\ep}) \quad \forall \bm v \in \bm V_\sigma, 
\end{equation}
or equivalently,
\begin{equation}\label{eq:VE_u0e}
  a_0(\bm{u}_{0\ep}, \bm{v})+\int_{\Gamma} g(0)\alpha_\ep(\bm{u}_{0\ep})\cdot \bm{v}~ds = (\bm h, \bm{v}) \quad \forall \bm v \in \bm V_\sigma.  
\end{equation}
Note that if $\bm u_0 \in \bm H^2(\Omega_a)$, then $\bm h = -2\nu \nabla \cdot \mathbb{D}(\bm u_0)$ in $\Omega_\mathrm{in}$ and $\Omega_\mathrm{out}$ respectively.
%%%%
\begin{lemma}[$(\bm H^2(\Omega_{\mathrm{in}}), \bm H^2(\Omega_{\mathrm{out}}))$ regularity for the stationary problem]\label{la:est-u0}
Given $g(0) \in H^1(\Gamma)$ and $\bm h \in \bm L^2(\Omega)$, the variational inequalities \eqref{eq:VI_u0e} and \eqref{eq:VI_u0} admit unique solutions $\bm{u}_{0\ep} \in \bm V_\sigma$ and $\bm{u}_0 \in \bm V_\sigma$, respectively, satisfying
\begin{subequations}\label{eq:u0/u0e}
\begin{align}
  & \|\bm u_{0\ep}\|_{\bm H^1} \le C\|\bm h\|, \quad \|\bm u_0\|_{\bm H^1} \le C\|\bm h\|, \label{eq:u0/u0e-a} \\ 
  & \| \bm{u}_{0\ep} - \bm{u}_0 \|_{\bm H^1} \rightarrow 0, \quad \text{ as } \ep \to 0. \label{eq:u0/u0e-b}
\end{align}
\end{subequations}

Moreover, we have $\bm{u}_{0\ep}, \bm{u}_0 \in \bm V_\sigma \cap \bm H^2(\Omega_a)$ and
\begin{subequations}\label{eq:u0-u0e-H2}
\begin{align}
& \| \bm{u}_{0\ep} \|_{\bm H^2(\Omega_a)} + \| p_{0\ep} \|_{\bm H^1(\Omega_a)} \le C(\|\bm h\| + \| g(0)\|_{H^1(\Gamma)}), \label{eq:u0e-H2} \\ 
& \| \bm{u}_0 \|_{\bm H^2(\Omega_a)} +\| p_{0} \|_{\bm H^1(\Omega_a)} \le C(\|\bm h\| + \|g(0)\|_{H^1(\Gamma)}). \label{eq:u0-H2}
\end{align}
\end{subequations}

\end{lemma}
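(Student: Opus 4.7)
The plan is to treat the regularized problem first, obtaining \eqref{eq:u0/u0e-a} and \eqref{eq:u0e-H2} for $\bm u_{0\ep}$, and then recover the analogous statements for $\bm u_0$ via passage to the limit $\ep \to 0$.

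For existence and uniqueness of $\bm u_{0\ep}$, I would observe that \eqref{eq:VE_u0e} amounts to $A_\ep \bm u = \bm h$ on $\bm V_\sigma$, where $\langle A_\ep \bm u, \bm v\rangle := a_0(\bm u, \bm v) + \int_\Gamma g(0)\,\alpha_\ep(\bm u)\cdot \bm v\,ds$ is continuous (as $\alpha_\ep \in C^1$), coercive by \eqref{eq:a0} together with $\alpha_\ep(\bm z)\cdot\bm z \ge 0$ from \eqref{eq:alpha_ep}, and monotone because the semi-positive definiteness of $\beta_\ep$ in \eqref{eq:beta_ep} makes $\alpha_\ep$ monotone; Browder-Minty then delivers a unique solution. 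For $\bm u_0$, the functional $j(0;\cdot) = \int_\Gamma g(0)|\cdot|\,ds$ is convex, proper, and lower semicontinuous on $\bm V_\sigma$, and together with the coercive continuous form $a_0$ it fits the classical Lions-Stampacchia framework. The $H^1$ bounds \eqref{eq:u0/u0e-a} follow by testing with $\bm v = 0$ in \eqref{eq:VI_u0e} (resp.\ \eqref{eq:VI_u0}): since $\rho_\ep(0)=0=|0|$ and $\rho_\ep\ge 0$, we obtain $a_0(\bm u_{0\ep},\bm u_{0\ep}) \le (\bm h,\bm u_{0\ep})$, whence \eqref{eq:a0} and Cauchy-Schwarz close the estimate.

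For the strong convergence \eqref{eq:u0/u0e-b}, the uniform $H^1$ bound yields $\bm u_{0\ep} \rightharpoonup \bm u^\ast$ weakly in $\bm V_\sigma$ along a subsequence; passing to the limit in \eqref{eq:VI_u0e} with the aid of \eqref{eq:rho-err} and the compact trace $\bm V_\sigma \hookrightarrow \bm L^2(\Gamma)$ shows $\bm u^\ast = \bm u_0$ by uniqueness, so the full sequence converges. The upgrade to strong $\bm H^1$ convergence is the standard trick: taking $\bm v = \bm u_0$ in \eqref{eq:VI_u0e} and $\bm v = \bm u_{0\ep}$ in \eqref{eq:VI_u0}, adding, and using weak convergence together with $|\rho_\ep - |\cdot|\,| \le \ep$, I would deduce $\limsup_{\ep\to 0} a_0(\bm u_{0\ep}-\bm u_0, \bm u_{0\ep}-\bm u_0) \le 0$, and then coercivity finishes.

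The main obstacle is the $H^2$-$H^1$ regularity \eqref{eq:u0-u0e-H2}. I would first recover the pressure $p_{0\ep}$: restricting \eqref{eq:VE_u0e} to $\bm V^0$ kills the boundary term on $\Gamma$, so the inf-sup condition \eqref{eq:is1} applied separately in $\Omega_\mathrm{in}$ and $\Omega_\mathrm{out}$ produces $p_{0\ep} \in \mr Q$ (after a normalization fixing the additive constants) with $a_0(\bm u_{0\ep}, \bm v) + b(\bm v, p_{0\ep}) = (\bm h, \bm v)$ for all $\bm v \in \bm V^0$. The pair $(\bm u_{0\ep}, p_{0\ep})$ then solves a Stokes interface problem with continuity of velocity across $\Gamma$, the Robin-type jump $[\bm\sigma(\bm u_{0\ep}, p_{0\ep})\bm n] = g(0)\alpha_\ep(\bm u_{0\ep})$, Dirichlet data on $\Gamma_D$, and $L^2$ right-hand side $\bm h$. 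Classical $H^2$-$H^1$ regularity for the Stokes system with $\bm H^{1/2}(\Gamma)$ Neumann-type data on a smooth interface then yields \eqref{eq:u0e-H2}, noting that $|\alpha_\ep| \le 1$ and $g(0) \in H^1(\Gamma)$ provide the needed control. The delicate point, where I expect the most work, is making the constant in \eqref{eq:u0e-H2} independent of $\ep$: since $|\nabla\alpha_\ep| = |\beta_\ep|$ blows up as $\ep \to 0$ near $\bm u_{0\ep} = 0$, the naive bound on $\|\alpha_\ep(\bm u_{0\ep})\|_{\bm H^{1/2}(\Gamma)}$ via $\bm u_{0\ep}|_\Gamma$ is not $\ep$-uniform, and a refined argument exploiting the pointwise bound $|\alpha_\ep| \le 1$ combined with a careful tangential-derivative estimate (or a duality-based argument that avoids differentiating $\alpha_\ep$) is needed. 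Once this uniform bound is secured, \eqref{eq:u0-H2} for $\bm u_0$ follows by weak-$\ast$ compactness of $(\bm u_{0\ep}, p_{0\ep})$ in $\bm H^2(\Omega_a) \times H^1(\Omega_a)$ together with the strong $\bm H^1$ convergence established above.
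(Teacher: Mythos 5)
Your overall architecture matches the paper's: a priori $\bm H^1$ bounds from testing, the cross-testing trick (take $\bm v=\bm u_0$ in \eqref{eq:VI_u0e} and $\bm v=\bm u_{0\ep}$ in \eqref{eq:VI_u0}, add, and use $|\rho_\ep(\bm z)-|\bm z||\le\ep$) for \eqref{eq:u0/u0e-b} --- which in fact yields the quantitative rate $\|\bm u_0-\bm u_{0\ep}\|_{\bm H^1}\le C\sqrt{\ep}$ --- and pressure recovery via the inf-sup condition \eqref{eq:is1} on each subdomain followed by a Hahn--Banach and normalization argument to fix the additive constants. A minor quibble: for \eqref{eq:u0/u0e-a} you test \eqref{eq:VI_u0e} with $\bm v=0$ and invoke $\rho_\ep(0)=0$, which holds for the model choice $\sqrt{|\bm z|^2+\ep^2}-\ep$ but is not among the listed axioms for $\rho_\ep$; the paper instead tests the equation form \eqref{eq:VE_u0e} with $\bm v=\bm u_{0\ep}$ and uses $\alpha_\ep(\bm z)\cdot\bm z\ge 0$ from \eqref{eq:alpha_ep}, which gives the stated $\ep$-free bound directly.

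The genuine gap is exactly where you flag it: the $\ep$-uniform $H^2$--$H^1$ estimate \eqref{eq:u0e-H2}. You correctly observe that one cannot bound $\|g(0)\alpha_\ep(\bm u_{0\ep})\|_{\bm H^{1/2}(\Gamma)}$ uniformly by differentiating $\alpha_\ep$ (since $\beta_\ep$ degenerates as $\ep\to 0$), but you then defer the resolution (``a refined argument \dots is needed''), and that refined argument is the heart of the lemma. The paper's resolving idea is: flatten $\Gamma$ locally by a $C^3$-diffeomorphism, test with $\tilde{\bm v}=D_{-h}^l(\theta^2 D_h^l\tilde{\bm u}_\ep)$ for \emph{tangential} difference quotients $D_h^l$, and observe that in the resulting interface term the dangerous contribution $-\int_S\theta^2\,\tilde{g}(0)\,D_h^l\alpha_\ep(\tilde{\bm u}_\ep)\cdot D_h^l\tilde{\bm u}_\ep\,\sqrt{\det G}\,dy'$ is nonpositive by monotonicity of $\alpha_\ep=\nabla\rho_\ep$ (convexity of $\rho_\ep$, i.e.\ \eqref{eq:beta_ep}), so it can simply be discarded; the only surviving boundary term carries the difference quotient on the coefficient $\tilde{g}(0)\sqrt{\det G}$ alone and is controlled by $\|g(0)\|_{H^1(\Gamma)}$ together with $|\alpha_\ep|\le 1$. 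This yields $\ep$-uniform tangential second-derivative bounds; the remaining normal regularity is then recovered by noting that the traces of $\bm u_{0\ep}$ on $\Gamma$ from either side lie in $\bm H^{3/2}(\Gamma)$ and invoking Dirichlet $H^2$-regularity of the Stokes system in $\Omega_{\mathrm{in}}$ and $\Omega_{\mathrm{out}}$ separately --- a step your proposal also leaves unaddressed. Without these two ingredients the proof of \eqref{eq:u0-u0e-H2} is not complete; the rest of your outline (weak compactness in $\bm H^2(\Omega_a)\times H^1(\Omega_a)$ to pass to $\bm u_0$) is sound once they are supplied.
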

%%%%
See Appendix~\ref{App:1} for the proof. 

%%%%
\begin{theorem}\label{th:S-wp-III}
Assume that $\bm{f} \in H^1(0, T; \bm L^2(\Omega))$, $g \in H^1(0, T;L^2(\Gamma))$, $g(0) \in H^1(\Gamma)$. Suppose that $\bm{u}_0 \in \bm V_\sigma \cap \bm H^2(\Omega_a)$ solves \eqref{eq:VI_u0} with some $\bm h  \in \bm L^2(\Omega)$, and $\bm{u}_{0\ep}$ is the solution of the regularization problem \eqref{eq:VE_u0e}. 
We replace the initial condition of {\bf(S-VE)$_{\sigma,\ep}$} with $\bm{u}_\ep(0) = \bm{u}_{0\ep}$. 
In addition to the result of Theorems~\ref{th:S-wp-ue} and \ref{th:S-wp-II}, the solution of {\bf(S-VE)$_{\sigma,\ep}$} satisfies $\bm{u}_\ep' \in L^\infty(0, T; \bm L^2(\Omega)) \cap L^2(0, T; \bm H^1(\Omega))$ with the a-priori estimates: 
\begin{equation}\label{eq:S-wp-III}
\| \bm{u}_\ep'\|_{L^\infty(0, T; \bm L^2)} + \| \bm{u}_\ep'\|_{L^2(0, T; \bm H^1)} \le C \big(\| \bm h\| + \|g'\|_{L^2(0, T;L^2(\Gamma))} + \|\bm{f}\|_{H^1(0, T; \bm L^2)} \big).
\end{equation}
The same estimate is satisfied for $\bm u'$ of {\bf(S-VI)$_{\sigma}$}.

%%%
Moreover, we have $\bm u \in L^\infty(0,T; \bm H^2(\Omega_a))$ and $p \in L^\infty(0,T; H^1(\Omega_a))$ with the estimate 
\begin{equation}\label{eq:u-p-H2}
\|\bm u(t)\|_{\bm H^2(\Omega_a)}+\|p(t)\|_{H^1(\Omega_a)} \le C(\|\bm f(t)\|+\|\bm f'(t)\|+\| g(0)\|_{H^1(\Gamma)}+\|g'(t)\|_{L^2(\Gamma)}+\|\bm h\|).
\end{equation}
\end{theorem}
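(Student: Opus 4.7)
The plan is to reuse the Galerkin--regularization pipeline of Theorems~\ref{th:S-wp-ue}--\ref{th:S-wp-II}, but now to \emph{differentiate} the Galerkin equation in $t$ so as to extract energy estimates on $\bm u_\ep'$ rather than $\bm u_\ep$. First I would differentiate \eqref{eq:S-VEem-a} in time and test against $\bm u_\ep^{m\prime}(t)$, arriving at
\[
\frac{1}{2}\frac{d}{dt}\|\bm u_\ep^{m\prime}\|^2 + a_0(\bm u_\ep^{m\prime}, \bm u_\ep^{m\prime}) + \int_\Gamma g\,(\bm u_\ep^{m\prime})^\top \beta_\ep(\bm u_\ep^m)\,\bm u_\ep^{m\prime}\,ds = \langle \bm f', \bm u_\ep^{m\prime}\rangle - \int_\Gamma g'\,\alpha_\ep(\bm u_\ep^m)\cdot \bm u_\ep^{m\prime}\,ds.
\]
The interface term on the left is non-negative by the semi-positivity \eqref{eq:beta_ep} of $\beta_\ep$, and the right-hand side is controlled via trace, Korn and Young inequalities, absorbing $c\|\bm u_\ep^{m\prime}\|_{\bm H^1}^2$ into the left-hand side; time integration then yields
\[
\|\bm u_\ep^{m\prime}(t)\|^2 + c\int_0^t\|\bm u_\ep^{m\prime}\|_{\bm H^1}^2\,d\tau \le \|\bm u_\ep^{m\prime}(0)\|^2 + C\bigl(\|\bm f'\|_{L^2(\bm L^2)}^2 + \|g'\|_{L^2(L^2(\Gamma))}^2\bigr).
\]

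The crucial new ingredient, and the only place where the hypothesis that $\bm u_0$ solves \eqref{eq:VI_u0} is used, is a uniform-in-$\ep$ control of $\bm u_\ep^{m\prime}(0)$. My plan is to enrich the basis so that $\bm u_\ep^m(0) = \bm u_{0\ep}$, test \eqref{eq:S-VEem-a} at $t=0$ against $\bm u_\ep^{m\prime}(0) \in \bm V_\sigma^m$, and use the stationary identity \eqref{eq:VE_u0e} evaluated at the same test function to rewrite $a_0(\bm u_{0\ep}, \bm u_\ep^{m\prime}(0)) + \int_\Gamma g(0)\alpha_\ep(\bm u_{0\ep})\cdot \bm u_\ep^{m\prime}(0)\,ds = (\bm h,\bm u_\ep^{m\prime}(0))$. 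This yields
\[
\|\bm u_\ep^{m\prime}(0)\|^2 = (\bm f(0) - \bm h,\,\bm u_\ep^{m\prime}(0)) \le (\|\bm f(0)\| + \|\bm h\|)\,\|\bm u_\ep^{m\prime}(0)\|,
\]
so $\|\bm u_\ep^{m\prime}(0)\| \le \|\bm f(0)\| + \|\bm h\|$ uniformly in $\ep$ and $m$. Combined with the previous inequality, this furnishes the $\ep$-uniform estimate \eqref{eq:S-wp-III} at the Galerkin level.

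Passing $m\to\infty$ and then $\ep\to 0$ exactly as in Theorems~\ref{th:S-wp-ue}--\ref{th:S-wp-II} (weak-$*$ compactness in $L^\infty(0,T;\bm L^2)$, weak compactness in $L^2(0,T;\bm H^1)$, and lower semicontinuity of norms) transfers the estimate \eqref{eq:S-wp-III} to the limit $\bm u$. For the spatial $\bm H^2$--$H^1$ regularity I would freeze $t \in (0,T)$ and observe that $\bm u(t) \in \bm V_\sigma$ satisfies the stationary variational inequality
\[
a_0(\bm u(t),\bm v - \bm u(t)) + j(t;\bm v) - j(t;\bm u(t)) \ge (\bm h(t),\,\bm v - \bm u(t)) \quad \forall \bm v \in \bm V_\sigma,
\]
with $\bm h(t) := \bm f(t) - \bm u'(t) \in \bm L^2(\Omega)$ by \eqref{eq:S-wp-III}; this is precisely the form of \eqref{eq:VI_u0} with threshold $g(t)$ in place of $g(0)$. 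A pointwise-in-$t$ application of Lemma~\ref{la:est-u0} then delivers \eqref{eq:u-p-H2}, the $\|g'(t)\|_{L^2(\Gamma)}$ and $\|\bm f'(t)\|$ contributions entering through the differentiated-in-time estimate on $\bm u'(t)$.

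The main obstacle I foresee is the initial estimate of the second paragraph: the careful matching $\bm u_\ep(0) = \bm u_{0\ep}$ with the solution of the stationary regularized VI is exactly what cancels the stiff initial layer that would otherwise scale like $\ep^{-1}$ in $\bm u_\ep'(0)$ and destroy all subsequent $\ep$-uniform estimates. A naive initialization would prevent closing the differentiated energy estimate uniformly in $\ep$, and the downstream $\bm H^2$--$H^1$ spatial regularity would then also fail because the stationary lemma requires the forcing $\bm h(t) = \bm f(t) - \bm u'(t)$ to lie in $\bm L^2(\Omega)$.
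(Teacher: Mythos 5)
Your proposal follows essentially the same route as the paper's proof: differentiating the Galerkin equation in time, testing with $\bm u_\ep^{m\prime}$, dropping the non-negative $\beta_\ep$ interface term, controlling $\|\bm u_\ep^{m\prime}(0)\|$ by combining the Galerkin equation at $t=0$ with the stationary identity \eqref{eq:VE_u0e} (your explicit remark about initializing the Galerkin scheme at $\bm u_{0\ep}$ is a point the paper leaves implicit), and finally applying Lemma~\ref{la:est-u0} pointwise in $t$ to the stationary problem with forcing $\bm f(t)-\bm u'(t)\in\bm L^2(\Omega)$. The argument and all key estimates match the paper's.
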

%%%%

\begin{proof}
We differentiate \eqref{eq:S-VEem-a} w.r.t. $t$ 
\begin{equation}\label{eq:prf-S-wp-III-1}
  (\bm u_\ep^{m \prime\prime}, \bm{w}_k)+\bm{a}_0(\bm u_\ep^{m \prime}, \bm{w}_k) +\int_{\Gamma}g'(t) \alpha_\ep(\bm u_\ep^m)\cdot \bm{w}_k ~ds  + \int_{\Gamma}g(t) \bm{u}_\ep^{m\prime \top}\beta_\ep(\bm{u}_\ep^m)\bm{w}_k ~ds = (\bm{f}', \bm{w}_k).
\end{equation}
Multiplying \eqref{eq:prf-S-wp-III-1} with $c'_k(t)$ and summing up w.r.t. $k$, we get 
\[
\frac{1}{2}\frac{d}{dt} \| \bm u_\ep^{m \prime}(t)\|^2 + \bm{a}_0(\bm u_\ep^{m \prime}, \bm u_\ep^{m \prime}) +\int_{\Gamma}g'(t) \alpha_\ep(\bm u_\ep^m)\cdot \bm u_\ep^{m \prime}~ds + \int_{\Gamma}g(t) \bm{u}_\ep^{m\prime\top} \beta_\ep(\bm{u}_\ep^m) \bm{u}_\ep^{m \prime} ~ds = (\bm{f}', \bm{u}_\ep^{m \prime}).
\]
Integrating the above equation w.r.t. $t$, in view of $|\alpha_\ep(\bm{z})| \le 1$ and \eqref{eq:beta_ep}, we obtain  
\begin{equation}\label{eq:prf-S-wp-III-2}
\sup_{t \in [0, T]}\| \bm u_\ep^{m \prime}(t)\|^2 + \| \bm u_\ep^{m \prime}\|_{L^2(0, T; \bm H^1)}^2 \le C \big( \| \bm u_\ep^{m \prime}(0)\|^2 + \|g'\|_{L^2(0, T;L^2(\Gamma))}^2 + \|\bm{f}'\|_{L^2(0, T; \bm L^2)}^2 \big).
\end{equation}
We have to evaluate $\| \bm u_\ep^{m \prime}(0)\|_{L^2}$ of the right-hand side. 

Note that \eqref{eq:prf-S-wp-II-0} holds at $t=0$, we have
\begin{equation}\label{eq:prf-S-wp-III-3}
(\bm{u}_\ep^{m \prime}(0), \bm{u}_\ep^{m \prime}(0))+\bm{a}_0(\bm{u}_{0\ep}, \bm{u}_\ep^{m \prime}(0)) + \int_{\Gamma}g(0) \alpha_\ep(\bm u_{0\ep})\cdot \bm{u}_\ep^{m \prime}(0)~ds = ( \bm{f}(0), \bm{u}_\ep^{m \prime}(0)).
\end{equation}
Testing \eqref{eq:VE_u0e} by $\bm{v} = \bm{u}_\ep^{m \prime}(0)$, we see that 
\begin{equation}\label{eq:prf-S-wp-III-3'}
\bm{a}_{0}(\bm{u}_{0\ep}, \bm{u}_\ep^{m \prime}(0))+\int_{\Gamma} g(0)\alpha_\ep(\bm{u}_{0\ep})\cdot \bm{u}_\ep^{m \prime}(0)~ds = (\bm h, \bm{u}_\ep^{m \prime}(0)). 
\end{equation}
Together with \eqref{eq:prf-S-wp-III-3}, we have  
\[
\|\bm{u}_\ep^{m \prime}(0)\|^2 = (\bm{f}(0) - \bm h, \bm{u}_\ep^{m \prime}(0)) \le \| \bm{f}(0) - \bm h\| \|\bm{u}_\ep^{m \prime}(0)\|, 
\]
which yields 
\begin{equation}\label{eq:prf-S-wp-III-4}
\|\bm{u}_\ep^{m \prime}(0)\| \le \| \bm{f}(0) - \bm h\| \le \| \bm{f}(0) \| + \| \bm h\|. 
\end{equation}
Inserting \eqref{eq:prf-S-wp-III-4} into \eqref{eq:prf-S-wp-III-2} results in
\[
\sup_{t\in [0, T]}\| \bm u_\ep^{m \prime}(t)\|^2 + \| \bm u_\ep^{m \prime}\|_{L^2(0, T; \bm H^1)}^2 \le C \big(\|\bm h\|^2 + \|g'\|_{L^2(0, T;L^2(\Gamma))}^2 + \|\bm{f}\|_{H^1(0, T; \bm L^2)}^2 \big).
\]
Hence, there exists a subsequence $\{ \bm{u}_\ep^m \}_m$ such that as $m\rightarrow \infty$, 
\[
  \begin{aligned}
    &\bm{u}_{\ep}^{m \prime} \stackrel{*}{ \rightharpoonup}  \bm{u}_{\ep}' \quad \text{ in } L^\infty(0, T; \bm L^2) ,\\
    &\bm{u}_{\ep}^{m\prime} \rightharpoonup \bm{u}_{\ep}' \quad \text{ in } L^2(0, T; \bm V_\sigma),   
  \end{aligned}
\]
and the a-priori estimate \eqref{eq:S-wp-III} is satisfied. 
Using the results of Theorem~\ref{th:S-wp-ue}, we assert that $\bm{u}_{\ep}$ uniquely solves  {\bf(S-VE)$_{\sigma,\ep}$}. 

Furthermore, we can find a subsequence $\{ \bm{u}_\ep \}_\ep$ such that as $\ep \rightarrow 0$, 
\[
\begin{aligned}
  &\bm{u}_\ep' \stackrel{*}{ \rightharpoonup}  \bm{u}' \quad \text{in } L^\infty(0, T; \bm L^2) ,\\
  &\bm{u}_{\ep}' \rightharpoonup \bm{u}' \quad \text{in } L^2(0, T; \bm V_\sigma),   
\end{aligned}
\]
and the estimate \eqref{eq:S-wp-III} holds for $\bm u$. Together with the results of Theorem~\ref{th:S-wp-u} and \eqref{eq:u0/u0e-b}, we conclude that $\bm{u}$ is the unique solution of {\bf(S-VI)$_\sigma$}. 

%%%
Since $\bm f-\bm u_\ep' \in  L^\infty(0,T; \bm L^2(\Omega))$, we can apply Lemma \ref{la:est-u0} to the folllowing problem. For a.e. $t \in [0, T]$,
\begin{subequations}\label{eq:S-Pt}
  \begin{align}
    -\nabla \cdot \bm{\sigma}(\bm u(t), p(t)) &= \bm f(t)-\bm u'(t) \quad &&\text { in } \Omega, \label{eq:S-Pt-a}\\
    \nabla \cdot \bm u(t) &=0 \quad &&\text { in } \Omega , \label{eq:S-Pt-b}\\
    \bm u(t)&=0 \quad &&\text { on } \Gamma_{D}, \label{eq:S-Pt-c} \\
    [\bm u(t)]=0, ~[\bm \sigma(\bm u(t), p(t))\bm n] & \in g(t) \partial |\bm u(t)| \quad&&\text { on } \Gamma. \label{eq:S-Pt-d}
  \end{align} 
\end{subequations}
Hence, $(\bm u, p) \in L^\infty(0,T; \bm H^2(\Omega_a)) \times L^\infty (0,T; H^1(\Omega_a))$ and \eqref{eq:u0-p0-H2} implies \eqref{eq:u-p-H2}.
\end{proof}
%%%

%%
\section{The well-posedness of the Navier-Stokes case} \label{sec:4}
In this section, we consider the class of Leray-Hopf type weak solution ~\cite{Leray34, Hopf51}
\[
  \bm u \in L^2(0, T; \bm V_\sigma) \cap L^\infty (0, T; \bm H_\sigma).
\]
The pressure $p$ is usually absent in the weak solution. 
We also consider the Ladyzhenskaya type strong solution \cite{Ladyzhenskaya69}
\[
  \begin{cases}
    \bm u \in L^\infty(0, T; \bm V_\sigma),~ \bm u' \in L^\infty(0, T; \bm H_\sigma) \cap L^2(0,T;\bm V_\sigma); \\ 
    p \in L^\infty (0,T; Q),
  \end{cases}
\]
where the local uniqueness and existence are guaranteed in 3D case. 

Throughout this section, we still assume that
\begin{equation}\label{eq:ass-NS-w}
  \text{For a.e. } t,~\bm f(t) \in \bm V', ~g(t) \in L^2(\Gamma) \text{ with } g > 0 \text{ and } \bm u_0 \in \bm H_\sigma.
\end{equation}
Further regularity assumptions of the data will be provided later.

%%%%%%%%
\subsection{Weak formulations}
The derivation of the weak formulation is almost the same as in Section~\ref{sec:3} with an extra nonlinear term $a_1(\bm u, \bm u, \bm v)$. 
Therefore, we give the variational inequality directly.

%%%
{\bf(NS-PDE)}  For a.e $t \in (0,T)$, find $(\bm u(t),p(t)) \in \bm V_\sigma \times \mr{Q}$ such that $\bm u'(t) \in \bm V_\sigma'$, $\bm{u}(0) = \bm{u}_0$, 
\begin{subequations}\label{eq:NS-PDE}
\begin{align}
  & \langle \bm{u}',\bm v\rangle + a_1(\bm u, \bm u, \bm v)+a_0(\bm u,\bm v) + b(\bm v, p) = \langle \bm{f},\bm v\rangle  \quad \forall \bm v \in \bm V^0, \\ 
  & |[\bm\sigma \bm{n}](t)| \le g(t),~ [\bm\sigma \bm{n}](t) \cdot \bm{u}- g(t)|\bm{u}| = 0 \text{ a.e. on } \Gamma,
\end{align}
\end{subequations}
where $[\bm\sigma \bm{n}] \in \bm \Lambda_{1/2}$ is defined by
\begin{equation}\label{eq:def-sign}
\langle [\bm\sigma \bm{n}], \bm v \rangle_{\bm \Lambda_{1/2}} := \langle \bm{f},\bm v\rangle - \langle \bm{u}',\bm v\rangle - a_1(\bm u, \bm u, \bm v) - a_0(\bm u,\bm v) - b(\bm v, p) \quad \forall \bm v \in \bm V.
\end{equation}
%

%%%
We can easily find that if $\bm u$ is a classic solution of {\bf(NS-P)}, then it also satisfies {\bf(NS-PDE)}. 
A sufficiently smooth weak solution is a classical solution. 

%%%Problem NS-VI
{\bf(NS-VI)$_\sigma$}  For a.e $t \in (0,T)$, find $\bm u(t) \in \bm V_\sigma$ such that $\bm{u}'(t) \in \bm V_\sigma'$, $\bm u(0) = \bm u_0$ and 
\begin{equation}\label{eq:NS-VI-sig}
  \langle \bm u',\bm v-\bm u \rangle +a_1(\bm u,\bm u, \bm v -\bm u)+a_0(\bm u,\bm v-\bm u) +j(t;\bm v)-j(t;\bm u) \geq \langle \bm f,\bm v-\bm u\rangle  \quad \forall \bm v  \in V_\sigma.
\end{equation}
%

%%%% equivalence thm PDE-VI
\begin{proposition}\label{prop:equi-NS}
The solution of {\bf(NS-PDE)} solves {\bf(NS-VI)$_\sigma$}. 
Conversely, if {\bf(NS-VI)$_\sigma$} admits a solution $\bm u(t)$, then there exists at least one $p(t) \in \mr{Q}$ such that $(\bm u(t),p(t))$ solves  {\bf(NS-PDE)}.
If another $p(t)^* \in \mr{Q}$ satisfies the same condition, then there exists a unique $\delta(t) \in \mathbb R$ such that $[p(t)]=[p^*(t)] +\delta(t)$. 
Furthermore, if $\bm{u}\cdot \bm{n}(t) \neq 0$ on $\Gamma$, then $\delta(t) = 0$, i.e., the associated pressure $p(t)$ is uniquely determined.
\end{proposition}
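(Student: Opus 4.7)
The plan is to reduce this statement to Theorem~\ref{th:equi-S} by treating the convective term $a_1(\bm u,\bm u,\cdot)$ as an additional linear functional of $\bm v$ once $\bm u$ is given. Since $\bm u(t)\in \bm V_\sigma\subset\bm H^1(\Omega)$, the bound \eqref{a1-2d-3d} yields $a_1(\bm u,\bm u,\cdot)\in\bm V'$ for a.e.\ $t$, so the abstract structure of the problem is identical to the Stokes case and every dual pairing, inf--sup argument, and Hahn--Banach extension carries over verbatim.

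For the forward implication {\bf(NS-PDE)}$\Rightarrow${\bf(NS-VI)$_\sigma$}, I take $\bm v\in\bm V_\sigma$ and test \eqref{eq:def-sign} against $\bm v-\bm u\in\bm V_\sigma$, noting that $b(\bm v-\bm u,p)=0$. The interface condition in \eqref{eq:NS-PDE} implies $\langle[\bm\sigma\bm n],\bm u\rangle_{\bm\Lambda_{1/2}}=j(t;\bm u)$ and $\langle[\bm\sigma\bm n],\bm v\rangle_{\bm\Lambda_{1/2}}\le j(t;\bm v)$, hence $\langle[\bm\sigma\bm n],\bm v-\bm u\rangle_{\bm\Lambda_{1/2}}\le j(t;\bm v)-j(t;\bm u)$, exactly as in the Stokes argument, and \eqref{eq:NS-VI-sig} follows.

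For the converse, given a solution $\bm u$ of {\bf(NS-VI)$_\sigma$} I substitute $\bm v=\bm u\pm\bm w$ with $\bm w\in \bm V_\sigma^0$ in \eqref{eq:NS-VI-sig}; since $\bm w|_\Gamma=0$ the $j$-terms cancel and I obtain the linear identity $\langle\bm u',\bm w\rangle+a_1(\bm u,\bm u,\bm w)+a_0(\bm u,\bm w)=\langle\bm f,\bm w\rangle$ on $\bm V_\sigma^0$. The inf--sup condition \eqref{eq:is1} applied separately on $\Omega_{\mathrm{in}}$ and $\Omega_{\mathrm{out}}$ produces a unique $\mr p=(\mr p_{\mathrm{in}},\mr p_{\mathrm{out}})\in L_0^2(\Omega_{\mathrm{in}})\times L_0^2(\Omega_{\mathrm{out}})$ for which the identity extends to all of $\bm V^0$. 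Defining
\[
\langle\mr{\bm\lambda},\bm v\rangle_{\bm\Lambda_{1/2}}:=\langle\bm f,\bm v\rangle-\langle\bm u',\bm v\rangle-a_1(\bm u,\bm u,\bm v)-a_0(\bm u,\bm v)-b(\bm v,\mr p),\qquad\bm v\in\bm V,
\]
gives $\mr{\bm\lambda}\in\bm\Lambda_{-1/2}$ that vanishes on $\bm V^0$. Subtracting this identity from \eqref{eq:NS-VI-sig} cancels $\bm u'$, $a_0$, $a_1$ and $\bm f$, leaving $\langle\mr{\bm\lambda},\bm v-\bm u\rangle_{\bm\Lambda_{1/2}}\le j(t;\bm v)-j(t;\bm u)$ for all $\bm v\in\bm V_\sigma$. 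From this point the rest of the argument is the Stokes one: the choices $\bm v=0,\,2\bm u$ give $\langle\mr{\bm\lambda},\bm u\rangle_{\bm\Lambda_{1/2}}=j(t;\bm u)$ and $\langle\mr{\bm\lambda},\bm\mu\rangle_{\bm\Lambda_{1/2}}\le j(t;\bm\mu)$ for $\bm\mu\in\mr{\bm\Lambda}_{1/2}$; a Hahn--Banach extension to $\bm L^1_g(\Gamma)$ delivers $\bm\lambda\in\bm L^\infty_{1/g}(\Gamma)$ with $|\bm\lambda|\le g$ and $\bm\lambda\cdot\bm u=g|\bm u|$ a.e.\ on $\Gamma$, together with $\mr{\bm\lambda}-\bm\lambda=\delta(t)\bm n$ for some $\delta(t)\in\mathbb R$. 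Setting $[\bm\sigma\bm n]:=\bm\lambda$ and $p:=\mr p+(k_{\mathrm{in}},k_{\mathrm{out}})$ with $k_{\mathrm{out}}-k_{\mathrm{in}}=\delta(t)$ and zero total mean furnishes the required solution of {\bf(NS-PDE)}.

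The uniqueness-up-to-$\delta(t)$ statement and the sharpening $\delta=0$ under $\bm u\cdot\bm n\ne 0$ are verbatim copies of the Stokes arguments: two admissible $\delta,\bar\delta$ must satisfy $(\delta-\bar\delta)\bm n\cdot\bm u=0$ a.e.\ on $\Gamma$, forcing $\delta=\bar\delta$ as soon as $\{\bm u\cdot\bm n\ne 0\}$ has positive measure. The only real obstacle is conceptual--verifying that the presence of $a_1(\bm u,\bm u,\cdot)$ does not disturb any step--and this is handled once and for all by the $\bm V'$-bound on $a_1(\bm u,\bm u,\cdot)$ coming from \eqref{a1-2d-3d}.
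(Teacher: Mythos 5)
Your proposal is correct and follows exactly the route the paper intends: the paper's own "proof" is the single remark that the argument is similar to Theorem~\ref{th:equi-S}, and your reduction — treating $a_1(\bm u,\bm u,\cdot)$ as a fixed element of $\bm V'$ via \eqref{a1-2d-3d} so that the inf--sup, duality, and Hahn--Banach steps of the Stokes proof carry over verbatim — is precisely the intended justification. No gaps.
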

%%%%
The proof of Proposition ~\ref{prop:equi-NS} is similar to Theorem ~\ref{th:equi-S}.

We still approximate the nonlinear non-differentiable functional $j(t;\cdot)$ by the regularization functional $j_\ep(t;\cdot)$.

%%% Problem NS-VI-ep
{\bf(NS-VI)$_{\sigma,\ep}$}  For a.e. $t\in (0,T)$, find $\bm u_\ep(t) \in \bm V_\sigma$ such that $\bm u_\ep'(t) \in \bm V_\sigma'$, $\bm u_\ep(0) = \bm u_0$ and
\begin{equation}\label{eq:NS-VIe}
  \langle \bm u'_\ep, \bm{v}-\bm u_\ep\rangle + a_1(\bm u_\ep, \bm u_\ep,\bm{v}-\bm u_\ep)+a_0(\bm u_\ep,\bm{v}-\bm u_\ep) +j_\ep(t;\bm{v})-j_\ep(t;\bm u_\ep) \geq \langle \bm f,\bm{v}-\bm u_\ep\rangle \quad \forall \bm{v}  \in \bm V_\sigma.
\end{equation}
Since $\rho_\ep$ is differentiable, the above variational inequality is equivalent to the following nonlinear variational problem. 

%%% Problem NS-VE-ep
{\bf(NS-VE)$_{\sigma,\ep}$}  For a.e. $t\in (0,T)$, find $\bm u_\ep(t) \in \bm V_\sigma$ such that $\bm u'_\ep(t) \in \bm V_\sigma'$, $\bm u_\ep(0) = \bm u_0$ and   
\begin{equation}\label{eq:NS-VEe}
  \langle \bm u'_\ep,\bm{v} \rangle+ a_1(\bm u_\ep,\bm u_\ep,\bm{v})+a_0(\bm u_\ep,\bm{v})+\int_{\Gamma}g(t) \alpha_\ep(\bm u_\ep)\cdot \bm{v}~ds =\langle\bm f,\bm{v}\rangle \quad \forall \bm{v}  \in \bm V_\sigma.
\end{equation}
%

% {\bf(NS-VI)$_{\sigma,\ep}$} and {\bf(NS-VE)$_{\sigma,\ep}$} are equivalent.

%%%%%%%%
\subsection{The existence theorem of weak solution}
%添加定义 H^\gamma 和Fourier变换
Before proving the existence theorem of the weak solution of {\bf(NS-VE)$_{\sigma,\ep}$}, we introduce the notation of fractional derivative in time of a function.

Assume that $X_0, X_1$ are Hilbert spaces with $X_0 \subset X_1$, and $\bm v$ is a function from $\mathbb R$ into $X_1$. We denote by $\hat{\bm v}$ its Fourier transform 
\begin{equation}\label{eq:def_fourier}
  \hat{\bm v}(\tau) = \int_{- \infty}^{+\infty } e^{-2i\pi t\tau} \bm v (t)~dt. 
\end{equation}
and the derivative in $t$ of order $\gamma$ of $\bm v$ is 
$
\widehat{D_t^{\gamma}\bm v(\tau)} = (2\pi \tau)^{\gamma} \hat{\bm v}(\tau).
$
For a given $\gamma >0$, define the space
\[
  \mathcal{H}^\gamma (\mathbb R; X_0,X_1) :=\{\bm v \in  L^2(\mathbb R; X_0), D_t^\gamma \bm v\in L^2(\mathbb R; X_1)\}
\]
with the norm 
\[
\| \bm v\| _{ \mathcal{H}^\gamma (\mathbb R; X_0,X_1)} =\{  \|\bm v\|_{L^2(\mathbb R; X_0)} + \| |\tau|^{\gamma} \hat{\bm v}\|_{D_t^\gamma \bm v\in L^2(\mathbb R; X_1)}\}^{1/2}
\]

% 定义算子 A和 B
For $\bm u, \bm v \in \bm V_\sigma$, we define
\[
  \begin{aligned}
    & \mathcal A: \bm V_\sigma \to \bm V_\sigma' , ~ \langle \mathcal A \bm u, \bm v\rangle :=a_0(\bm u, \bm v),  \\ 
    & \mathcal B:   \bm V_\sigma \to \bm V_\sigma', ~ \langle \mathcal B \bm u, \bm v\rangle := a_1(\bm u, \bm u, \bm v),\\ 
    &\mathcal C:  \bm V_\sigma \to \bm V_\sigma', ~ \langle \mathcal C \bm u, \bm v\rangle := \int_\Gamma g(t)\alpha_\ep(\bm u)\cdot \bm v~ds.
  \end{aligned}
\]
It is easy to see that 
\begin{equation}\label{eq:est-opt}
  \|\mathcal A \bm u\|_{\bm V_\sigma'}   \le C\|\bm u\|_{\bm H^1},  ~\|\mathcal B \bm u\|_{\bm V_\sigma'}   \le C\|\bm u\|_{\bm H^1}^2, ~\|\mathcal C \bm u\|_{\bm V_\sigma'}   \le C\|g(t)\|_{L^2(\Gamma)} ~(\text{by } |\alpha_\ep(\cdot) |\le 1).
\end{equation}
%

%%% thm VE_ep
\begin{theorem}\label{th:NS-wp-ue}
Under the assumptions of \eqref{eq:ass-NS-w}, there exists at least a weak solution $\bm u_\ep \in L^2(0, T; \bm V_\sigma )\cap L^\infty(0, T; \bm H_\sigma)$ and $\bm u_\ep' \in L^r(0, T; \bm V_\sigma')(r=2 \text{ when }d=2, r=4/3 \text{ when }d=3)$ of {\bf(NS-VE)$_{\sigma,\ep}$}.
\end{theorem}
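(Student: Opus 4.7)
The plan is to mirror the five-step Galerkin argument from Theorem~\ref{th:S-wp-ue}, the new feature being the convective term $a_1(\bm u_\ep, \bm u_\ep, \bm v)$. First, I would fix the same Galerkin basis $\{\bm w_k\}$ of $\bm V_\sigma$ and seek $\bm u_\ep^m(t)=\sum_{k=1}^m c_k(t)\bm w_k$ solving
\[
(\bm u_\ep^{m\prime},\bm w_k)+a_1(\bm u_\ep^m,\bm u_\ep^m,\bm w_k)+a_0(\bm u_\ep^m,\bm w_k)+\int_\Gamma g\,\alpha_\ep(\bm u_\ep^m)\cdot\bm w_k\,ds=\langle\bm f,\bm w_k\rangle,\quad \bm u_\ep^m(0)=P_m\bm u_0.
\]
This is a locally Lipschitz ODE system for $\{c_k\}$, so Peano/Picard gives short-time solvability; the a~priori estimate below upgrades it to $[0,T]$.

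The a~priori estimate is exactly the point where the FIC framework pays off: testing with $\bm u_\ep^m$ itself, the identity $a_1(\bm u_\ep^m,\bm u_\ep^m,\bm u_\ep^m)=0$ from \eqref{eq:a1=0} is available because $\bm u_\ep^m \in \bm V_\sigma$ (continuous across $\Gamma$, so $[\bm u_n]=0$), and $\int_\Gamma g\,\alpha_\ep(\bm u_\ep^m)\cdot\bm u_\ep^m\,ds\ge 0$ by \eqref{eq:alpha_ep}. Exactly as in \eqref{eq:S-est-0}--\eqref{eq:S-est-I}, I obtain the uniform bound
\[
\|\bm u_\ep^m\|_{L^\infty(0,T;\bm H_\sigma)}+\|\bm u_\ep^m\|_{L^2(0,T;\bm V_\sigma)}\le C(\|\bm f\|_{L^2(0,T;\bm V')}+\|\bm u_0\|),
\]
which is independent of $m$ and $\ep$. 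This is where Remark~\ref{rk:FIC} is crucial: the analogous LBCF argument fails because its $\Gamma_1$-boundary term is not sign-definite.

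Next I need a bound on $\bm u_\ep^{m\prime}$ in some negative-regularity Bochner space, combining \eqref{eq:est-opt} and the sharper trilinear estimates \eqref{eq:a1-2d}--\eqref{eq:a1-3d}. In 2D, $\|\mathcal B\bm u_\ep^m\|_{\bm V_\sigma'}\le C\|\bm u_\ep^m\|_{\bm L^2}\|\bm u_\ep^m\|_{\bm H^1}$ gives $\mathcal B\bm u_\ep^m\in L^2(0,T;\bm V_\sigma')$, hence $\bm u_\ep^{m\prime}\in L^2(0,T;\bm V_\sigma')$; in 3D, $\|\mathcal B\bm u_\ep^m\|_{\bm V_\sigma'}\le C\|\bm u_\ep^m\|_{\bm L^2}^{1/2}\|\bm u_\ep^m\|_{\bm H^1}^{3/2}$ yields only $\bm u_\ep^{m\prime}\in L^{4/3}(0,T;\bm V_\sigma')$, exactly the exponent claimed. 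The Robin/friction term contributes $\|g\|_{L^2(0,T;L^2(\Gamma))}$ via $|\alpha_\ep|\le 1$.

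The main obstacle, and the only real departure from the Stokes proof, is the passage to the limit in the quadratic term $a_1(\bm u_\ep^m,\bm u_\ep^m,\bm v)$: weak convergence is not enough. I would apply the Aubin--Lions--Simon lemma (or the fractional-derivative compactness of Temam for which the $\mathcal H^\gamma$ notation has just been introduced) with the triple $\bm V_\sigma\hookrightarrow\hookrightarrow\bm H_\sigma\hookrightarrow\bm V_\sigma'$ to extract a subsequence with
\[
\bm u_\ep^m\to\bm u_\ep\quad\text{strongly in }L^2(0,T;\bm H_\sigma),\qquad \bm u_\ep^m\rightharpoonup\bm u_\ep\ \text{in }L^2(0,T;\bm V_\sigma),\ \ \bm u_\ep^{m\prime}\rightharpoonup\bm u_\ep'\ \text{in }L^r(0,T;\bm V_\sigma').
\]
Strong $L^2(0,T;\bm L^2(\Omega))$ convergence lets me pass to the limit in $\int_0^T a_1(\bm u_\ep^m,\bm u_\ep^m,\bm v)\psi\,dt$ after an $a_1(\bm u_\ep^m,\bm v,\bm u_\ep^m)$-type rearrangement via \eqref{eq:sys} (for fixed test functions $\bm v\psi$ from $\bigcup_m \bm V_\sigma^m\otimes C^1([0,T])$). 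The trace-compactness step $\bm u_\ep^m\to\bm u_\ep$ in $L^2((0,T)\times\Gamma)^d$ and the continuity of $\alpha_\ep$ (Lebesgue's dominated convergence) handle the friction term exactly as in the proof of Theorem~\ref{th:S-wp-ue}. The initial condition $\bm u_\ep(0)=\bm u_0$ is verified by the same integration-by-parts comparison as in (Step 4) there. Since existence (not uniqueness) is asserted, no further argument is required.
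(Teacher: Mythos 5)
Your proposal is correct and follows essentially the same route as the paper: Galerkin approximation, the energy estimate exploiting $a_1(\bm u_\ep^m,\bm u_\ep^m,\bm u_\ep^m)=0$ from \eqref{eq:a1=0} together with \eqref{eq:alpha_ep}, a compactness argument giving strong convergence in $L^2(0,T;\bm H_\sigma)$ to pass to the limit in the quadratic term via \eqref{eq:sys}, and the trilinear estimates \eqref{eq:a1-2d}--\eqref{eq:a1-3d} for the $L^r(0,T;\bm V_\sigma')$ bound on $\bm u_\ep'$. The only caveat is that the paper obtains the compactness through the fractional-derivative ($\mathcal H^\gamma$, Fourier-transform) argument of Temam rather than Aubin--Lions, precisely because bounding $\bm u_\ep^{m\prime}$ itself in $L^{4/3}(0,T;\bm V_\sigma')$ at the Galerkin level would require uniform $\bm V_\sigma'$-boundedness of the projections $P_m$ (only the components against $\bm w_1,\dots,\bm w_m$ are controlled by the Galerkin equation); your parenthetical fallback to the $\mathcal H^\gamma$ route is the one actually used, and the $L^r$ bound on the time derivative is then derived for the limit function $\bm u_\ep$, where this issue disappears.
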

%%%

\begin{proof}
The proof can be divided into five steps.  The priori estimate is similar to Theorem \ref{th:S-wp-ue},   and additionally, we prove $  \tilde{\bm u}^m_\ep \text{ belongs to a bounded set of } \mathcal{H}^\gamma(\mathbb{R} ;\bm V_\sigma, \bm H_\sigma)$ in (Step 3).

%%%
(Step 1) {\bf Galerkin's method}. For fixed $m$, find $c_k(t)$ such that $\bm{u}_\ep^m=\sum_{k=1}^m c_k(t) \bm w_k$ satisfies
\begin{subequations}\label{eq:w-VEem}
  \begin{align}
    \langle\bm{u}_{\ep}^{m\prime},\bm w_k\rangle+a_1(\bm{u}_\ep^m,\bm{u}_\ep^m,\bm w_k)+a_0(\bm{u}_\ep^m ,\bm w_k)+\int_\Gamma g(t)\alpha_\ep(\bm{u}_\ep^m)\cdot \bm w_k ~ds &=\langle \bm f,\bm w_k\rangle, \label{eq:w-VEem-a} \\
    \bm{u}_\ep^m (0)&=\bm{u}_0^m.\label{eq:w-VEem-b}
  \end{align}
\end{subequations}
By the standard existence theory of finite dimension ODEs system, $\exists~T_1 > 0$ such that \eqref{eq:w-VEem} admits unique local solutions $c_k(t)\in C^2([0, T_1])( k=1,\ldots,m)$. 
Next, we can extend $T_1$ to $T$ by a-priori estimate.

%%%
(Step 2) {\bf A-priori estimate}. Multiplying \eqref{eq:w-VEem-a} by $c_k(t)$ and summing for $k=1,\ldots m$, together with ~\eqref{eq:a1=0}, we obtain
\[ 
  \langle\bm{u}_\ep^{m\prime},\bm{u}_\ep^m\rangle+a_0(\bm{u}_\ep^m,\bm{u}_\ep^m) +\int_\Gamma g(t)\alpha_\ep(\bm{u}_\ep^m)\cdot\bm{u}_\ep^m ~ds=\langle \bm f,\bm{u}_\ep^m\rangle, 
\]
where $a_1(\bm{u}_\ep^m,\bm{u}_\ep^m,\bm{u}_\ep^m)$ vanish because we consider the frictional interface problem \eqref{eq:P-e} instead of frictional boundary problem (see Remark \ref{rk:FIC}).

Similar to \eqref{eq:S-est-0}, we get
\begin{equation}\label{eq:NS-est-0}
  \frac{d}{d t}\|\bm{u}_{\ep}^m(t)\|^2+c\|\bm{u}_{\ep}^m\|_{\bm{H}^1}^2 \leq \frac{1}{c}\|\bm{f}\|^2.
\end{equation}
Then, integrating the above equation w.r.t. $t$, we obtain
\begin{equation}\label{eq:NS-est-1}
  \sup_{t \in [0,T]}\|\bm{u}_\ep^m(t)\|_{\bm H_\sigma}^2 + c \|\bm{u}_\ep^m \|_{L^2(0,T;\bm V_\sigma)}^2 
  \leq C \| \bm f \|_{L^2(0,T;\bm V')}^2 + \|\bm{u}_0\|^2   
\end{equation}
which implies that $L^\infty(0,T;\bm H_\sigma)$ and $L^2(0,T;\bm V_\sigma)$ norms of $\bm{u}_\ep^m$ are bounded independent of $m$ and $\ep$.

(Step 3) To pass to the limit in the nonlinear term $a_1$, we need an estimate of the fractional derivative in time of $\bm u_\ep^m$. Setting 
$
  \bm f^m =   \bm f-\nu \mathcal A\bm u_\ep^m - \mathcal B \bm u_\ep^m -\mathcal C  \bm u_\ep^m , 
$
because of \eqref{eq:est-opt}, we have 
\begin{equation}\label{eq:est-fm}
 \|\bm f^m(t)\|_{\bm V_\sigma'} \le C\left(\|\bm f(t)\|_{\bm V_\sigma'} +\nu \|\bm u_\ep^m(t)\|_{\bm V_\sigma} + \|\bm u_\ep^m(t)\|_{\bm V_\sigma}^2 + \|g(t)\|_{L^2(\Gamma)}\right).
\end{equation}

Now we extend ${\bm u}_\ep^m, \bm f^m$  by $0$ outside $[0,T]$  in ~\eqref{eq:w-VEem-a}  denoted by $\tilde{\bm u}_\ep^m,  \tilde{\bm f}^m $ to get  
\begin{equation}\label{eq:extend}
  \frac{d}{dt}(\tilde{\bm u}_\ep^m, \bm w_k) = \langle \tilde{\bm f}^m, \bm w_k \rangle + (\bm u_0^m, \bm w_k) \delta_0 - (\bm u_\ep^m(T), \bm w_k) \delta_T, \quad k=1, \ldots, m
\end{equation}
where $\delta_0, \delta_T$ are Dirac distributions at $0$ and $T$.  
After taking  the Fourier transform, \eqref{eq:extend} results in
\begin{equation}\label{eq:F-tran}
  2i \pi\tau (\hat{\bm u}_\ep^m, \bm w_k) = \langle \hat{\bm f}^m, \bm w_k \rangle + (\bm u_0^m, \bm w_k)  - (\bm u_\ep^m(T), \bm w_k)e^{-2i\pi \tau T}, 
\end{equation}
where $\hat{\bm u}_\ep^m$ and $\hat{\bm f}^m$ are the Fourier transform of ${\bm u}_\ep^m$ and $\bm{f}^m$, respectively. 
We multiply \eqref{eq:F-tran} by $\hat{c}_k(\tau)$ (the Fourier transform of $\tilde c_k(t)$) and add the resulting equations for $k = 1, \ldots , m$, obtaining
\begin{equation}\label{eq:F-tran-1}
  2i \pi\tau \|\hat{\bm u}_\ep^m(\tau)\|^2 = \langle \hat{\bm f}^m(\tau),\hat{\bm u}_\ep^m(\tau) \rangle + (\bm u_0^m, \hat{\bm u}_\ep^m(\tau) )  - (\bm u_\ep^m(T), \hat{\bm u}_\ep^m(\tau) )e^{-2i\pi \tau T}.
\end{equation}
Via a similar arguement to~\cite[Proof Theorem III.3.1]{Temam77}, one can show that

\begin{equation}\label{eq:u_Fourier}
  \int_{-\infty}^{+\infty} |\tau|^{2\gamma} \|\hat{\bm u}^m_\ep(\tau)\|^2~dt \le \text{const}, \quad \text{for some } 0< \gamma < \frac{1}{4}.
\end{equation}
Together with $\bm u_\ep^m \in L^2(0, T; \bm V_\sigma)$, we conclude that 
\begin{equation}\label{eq:H-gamma}
  \tilde{\bm u}^m_\ep \text{ belongs to a bounded set of } \mathcal{H}^\gamma(\mathbb{R} ;\bm V_\sigma, \bm H_\sigma).
\end{equation}
%

%%%
(Step 4) {\bf Passing to the limit for $m \to \infty$}. We can construct a subsequence of $\{\bm{u}_\ep^m\}_{m=1}^\infty$, denoted by ${\bm{u}_\ep^m}$ again such that, as $m \to \infty$,
\[
  \begin{aligned}
    &\bm{u}_{\ep}^m \rightharpoonup  \bm{u}_{\ep} \quad\text{ in } L^2(0,T;\bm V_\sigma) ,\\
    &\bm{u}_{\ep}^m \stackrel{*}{\rightharpoonup}  \bm{u}_{\ep} \quad\text{ in } L^\infty(0, T; \bm H_{\sigma}).
  \end{aligned}
\]
%
% 引用紧定理 Theorem III.2.2 得到强收敛
Due to \eqref{eq:H-gamma} and ~\cite[Theorem III.2.2]{Temam77}, we have
\begin{equation}\label{eq:L2-strong}
  \bm{u}_{\ep}^m \rightarrow \bm{u}_{\ep} \quad\text{ in } L^2(0, T; \bm H_{\sigma}).
\end{equation}

Multiplying \eqref{eq:w-VEem-a} by $\psi(t)\in C([0,T])$ with $\psi(T)=0$, using the integration by parts, we get
\[
\begin{aligned}
  &-\int_0^T(\bm{u}_\ep^m,\psi'(t)\bm w_k)~dt - (\bm{u}_0^m,\psi(0)\bm w_k)+\int_0^T a_0(\bm{u}_\ep^m ,\psi(t)\bm w_k)~dt  \\ 
  &+\int_0^T a_1(\bm{u}_\ep^m,\bm{u}_\ep^m,\psi(t)\bm w_k)~dt  +\int_0^T\int_\Gamma g\alpha_\ep(\bm{u}_\ep^m)\cdot\psi(t)\bm w_k ~ds dt =\int_0^T\langle f,\psi(t)\bm w_k \rangle ~dt.
\end{aligned}
\]
%

% as in th3.2...
The convergence results of the linear terms can be obtained easily with $m \to \infty$. The nonlinear terms are calculated as follows. As $m \to \infty$,
\begin{equation}\label{eq:cong-a1}
\begin{aligned}
  &\bigg|\int_0^T a_1(\bm{u}_\ep^m,\bm{u}_\ep^m ,\psi(t)\bm w_k)~dt -\int_0^T a_1(\bm{u}_\ep ,\bm{u}_\ep,\psi(t)\bm w_k)~dt \bigg| \\
  =&\bigg|\int_0^T a_1(\bm{u}_\ep^m,\psi(t)\bm w_k,\bm{u}_\ep-\bm{u}_\ep^m)~dt +\int_0^T a_1(\bm{u}_\ep-\bm{u}_\ep^m,\psi(t)\bm w_k,\bm{u}_\ep)~dt \bigg| \\
  \leq& C \max |\psi(t)| \|\nabla \bm w_k\|_{\bm L^\infty}\|\bm{u}_\ep^m-\bm{u}_\ep\|_{L^2(0, T; \bm H_\sigma)}(\|\bm{u}_\ep^m\|_{L^2(0, T; \bm H_\sigma)}+\|\bm{u}_\ep\|_{L^2(0, T; \bm H_\sigma)}) \to 0,
\end{aligned}
\end{equation}
where we have applied the \eqref{eq:sys} and \eqref{eq:L2-strong}. By \eqref{eq:alpha} and the Lebesgue convergence theorem, we get
\[
  \int_0^T\int_\Gamma g\alpha_\ep(\bm{u}_\ep^m)\cdot\psi(t)\bm w_k ~ds dt \rightarrow \int_0^T\int_\Gamma g\alpha_\ep(\bm{u}_\ep)\cdot\psi(t)\bm w_k ~ds dt. \quad (m \to \infty)
\]

The remaining verification of the initial condition $\bm u_\ep(0)=\bm u_0$ are the same as Theorem ~\ref{th:S-wp-ue}.
Hence, $\bm{u}_\ep$ is a weak solution of {\bf(NS-VE)$_{\sigma,\ep}$}.

%%%%
(Step 5) We define $\bm u_\ep'$ in the weak sense: for any $t\in (0,T]$ and $\psi(t) \in C_0^\infty((0,T))$,
\begin{equation}\label{eq:NS-VEe-w}
  \int_0^T \left(\langle \bm{u}_\ep',\bm v\rangle  + a_0(\bm{u}_\ep, \bm v)
  + a_1(\bm{u}_\ep,\bm{u}_\ep,\bm v) +\int_\Gamma g \alpha_\ep(\bm{u}_\ep)\cdot\bm v ~ds - \langle f,\bm v \rangle\right)\psi(t) dt =0.
\end{equation}
When $d = 2$, by using \eqref{eq:a1=0} and \eqref{eq:a1-2d}, we get
\[
  \begin{aligned}
  &\big| \int_0^T a_1(\bm{u}_{\ep}, \bm{u}_{\ep}, \bm{v}) ~dt \big| =\big|\int_0^T a_1(\bm{u}_{\ep},\bm{v},\bm{u}_{\ep}) ~dt \big| \le \int_0^T \| \bm{u}_\ep\|_{\bm L^2} \| \bm{v}\|_{\bm H^1} \| \bm{u}_\ep\|_{\bm H^1} ~dt \\
  \le &  \|\bm{u}_\ep\|_{L^\infty(0,T;\bm L^2)} \|\bm{v}\|_{L^2(0,T;\bm V_\sigma)}  \|\bm{u}_\ep\|_{L^2(0,T;\bm V_\sigma)}. 
  \end{aligned}
\]
When $d=3$, we have
\[
  \begin{aligned}
  &\big| \int_0^T a_1(\bm{u}_{\ep}, \bm{u}_{\ep}, \bm{v}) ~dt \big| =\big|\int_0^T a_1(\bm{u}_{\ep},\bm{v},\bm{u}_{\ep}) ~dt \big| \le \int_0^T \| \bm{u}_\ep\|^{\frac{1}{2}}_{\bm L^2} \| \bm{v}\|_{\bm H^1} \| \bm{u}_\ep\|^{\frac{3}{2}}_{\bm H^1} ~dt \\
  \le &  \|\bm{u}_\ep\|^{\frac{1}{2}}_{L^2 (0,T;\bm L^2)} \|\bm{v}\|_{L^4(0,T;\bm V_\sigma)}  \|\bm{u}_\ep\|^{\frac{3}{2}}_{L^2(0,T;\bm V_\sigma)}. 
  \end{aligned}
\]
It is easy to obtain $\bm{u}'_{\ep} \in L^r(0,T;\bm V_\sigma')$ form \eqref{eq:NS-VEe-w} by using H\"older's inequality.
\end{proof}
%%%

Next, by passing to the limit $\ep \to 0$, we can obtain the existence of {\bf(NS-VI)$_\sigma$}.

%%% thm V
\begin{theorem}\label{th:NS-wp-u}
Under the assumptions of \eqref{eq:ass-NS-w}, there exists at least a weak solution 
\[
\bm u \in L^2(0, T; \bm V_\sigma )\cap L^\infty(0, T; \bm H_\sigma) \text{ with } \bm u' \in L^r(0, T; \bm V_\sigma')
\]
of {\bf(NS-VI)$_{\sigma}$}. Moreover, when $d=2$, the weak solution is unique.
\end{theorem}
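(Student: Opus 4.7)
The plan is to mirror the regularization-and-passage-to-the-limit strategy of Theorem \ref{th:NS-wp-ue}, using the $\bm u_\ep$ produced there as the approximating family, and then obtaining the limit $\bm u$ as $\ep\to 0$. First, I would record that the a-priori bounds of (Step 2) and (Step 3) in the proof of Theorem \ref{th:NS-wp-ue} depend only on $\|\bm f\|_{L^2(0,T;\bm V')}$, $\|g\|_{L^2(0,T;L^2(\Gamma))}$, and $\|\bm u_0\|$, hence are uniform in $\ep$. Consequently $\{\bm u_\ep\}$ is bounded in $L^2(0,T;\bm V_\sigma)\cap L^\infty(0,T;\bm H_\sigma)$, $\{\bm u'_\ep\}$ is bounded in $L^r(0,T;\bm V_\sigma')$, and $\{\tilde{\bm u}_\ep\}$ lies in a bounded set of $\mathcal{H}^\gamma(\mathbb R;\bm V_\sigma,\bm H_\sigma)$ for some $0<\gamma<1/4$. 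Extracting a subsequence, I would obtain weak/weak-* limits $\bm u_\ep\rightharpoonup \bm u$ in $L^2(0,T;\bm V_\sigma)$, $\bm u_\ep\stackrel{*}{\rightharpoonup}\bm u$ in $L^\infty(0,T;\bm H_\sigma)$, $\bm u'_\ep\rightharpoonup \bm u'$ in $L^r(0,T;\bm V_\sigma')$, plus strong convergence $\bm u_\ep\to\bm u$ in $L^2(0,T;\bm H_\sigma)$ from \cite[Theorem III.2.2]{Temam77}, and by the trace theorem $\bm u_\ep\to\bm u$ in $L^2((0,T)\times\Gamma)^d$. Clearly $\bm u(0)=\bm u_0$ since $\bm u_\ep(0)=\bm u_0$ for all $\ep$.

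Next I would integrate \eqref{eq:NS-VIe} in time against any $\bm v\in L^2(0,T;\bm V_\sigma)$ and pass each term to the limit. The $\langle\bm u'_\ep,\bm v-\bm u_\ep\rangle$ and $a_0(\bm u_\ep,\bm v-\bm u_\ep)$ terms are treated exactly as in the Stokes proof (Theorem \ref{th:S-wp-u}), producing $-\tfrac12\|\bm u(T)-\bm u_\ep(T)\|_{\bm L^2}^2\le 0$ on the diagonal and using weak lower semicontinuity of $a_0$. The $j_\ep(t;\bm v)$ and $j_\ep(t;\bm u_\ep)$ terms converge to $j(t;\bm v)$ and $j(t;\bm u)$ by \eqref{eq:rho-err} combined with the strong $L^2(\Gamma)$ convergence of the trace, verbatim as in the Stokes case.

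The main technical point is the trilinear term $a_1(\bm u_\ep,\bm u_\ep,\bm v-\bm u_\ep)$. Writing $a_1(\bm u_\ep,\bm u_\ep,\bm v-\bm u_\ep)=-a_1(\bm u_\ep,\bm v-\bm u_\ep,\bm u_\ep)$ via \eqref{eq:sys} (valid because $\bm v-\bm u_\ep\in \bm V_\sigma^0$ modulo a test-function density argument; alternatively, using the split $a_1(\bm u_\ep,\bm u_\ep,\bm v)-a_1(\bm u_\ep,\bm u_\ep,\bm u_\ep)$ with the second being $0$ by \eqref{eq:a1=0}), the issue reduces to passing to the limit in $a_1(\bm u_\ep,\bm u_\ep,\bm v)$. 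This works as in \eqref{eq:cong-a1}: decompose into $a_1(\bm u_\ep-\bm u,\bm v,\bm u_\ep)+a_1(\bm u,\bm v,\bm u_\ep-\bm u)+a_1(\bm u,\bm v,\bm u)$ and use the strong $L^2(0,T;\bm H_\sigma)$ convergence together with the $\bm H^1$-bounds to send the first two to zero. This is the step I expect to be the real obstacle, as the cubic structure is only integrable thanks to the $L^2L^2$-strong convergence, and one must pick a suitable dense subset of test functions (e.g. $\bm v\in L^\infty(0,T;\bm V_\sigma\cap \bm L^\infty)$) to make all terms well-defined, then conclude by density.

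For uniqueness in $d=2$, let $\bm u^1,\bm u^2$ be two weak solutions. Testing \eqref{eq:NS-VI-sig} for $\bm u^1$ with $\bm v=\bm u^2$ and for $\bm u^2$ with $\bm v=\bm u^1$ and adding cancels the $j$-terms, yielding
\begin{equation*}
\tfrac{1}{2}\tfrac{d}{dt}\|\bm u^1-\bm u^2\|^2 + a_0(\bm u^1-\bm u^2,\bm u^1-\bm u^2) \le -a_1(\bm u^1,\bm u^1,\bm u^1-\bm u^2) + a_1(\bm u^2,\bm u^2,\bm u^1-\bm u^2).
\end{equation*}
Writing the right-hand side as $a_1(\bm u^1-\bm u^2,\bm u^1,\bm u^1-\bm u^2)$ via \eqref{eq:a1=0}, applying the 2D estimate \eqref{eq:a1-2d} and Young's inequality gives a bound $\le\tfrac{c_0}{2}\|\bm u^1-\bm u^2\|_{\bm H^1}^2 + C\|\bm u^1\|_{\bm H^1}^2\|\bm u^1-\bm u^2\|^2$. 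Absorbing the $\bm H^1$-term into the left-hand side via \eqref{eq:a0} and invoking Gronwall with $\|\bm u^1\|_{\bm H^1}^2\in L^1(0,T)$ yields $\bm u^1=\bm u^2$.
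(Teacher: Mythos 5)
Your proposal follows essentially the same route as the paper: extract weakly convergent subsequences from the uniform bounds of Theorem~\ref{th:NS-wp-ue}, obtain strong $L^2(0,T;\bm H_\sigma)$ convergence by compactness, pass to the limit in the integrated inequality with the trilinear term reduced to $a_1(\bm u_\ep,\bm u_\ep,\bm v)$ via \eqref{eq:a1=0} and handled by the decomposition of \eqref{eq:cong-a1}, and prove 2D uniqueness by cross-testing, \eqref{eq:a1-2d}, Young, and Gronwall. The argument is correct and matches the paper's proof in all essential steps.
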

%%%

\begin{proof}
As a result of Theorem ~\ref{th:NS-wp-ue}, there exists a subsequence of $\{ \bm{u}_\ep \}_\ep$ such that 
  \[
  \begin{aligned}
  &\bm{u}_{\ep} \rightharpoonup  \bm{u}\quad\text{ in } L^2(0, T; \bm V_\sigma) , \\
  &\bm{u}_{\ep} \stackrel{*}{\rightharpoonup}  \bm{u}\quad\text{ in } L^\infty(0, T; \bm H_{\sigma}) , \\
  &\bm{u}'_{\ep} \rightharpoonup  \bm{u}'\quad\text{ in } L^r(0, T; \bm V_\sigma').
  \end{aligned}
\]
%
% \ep \to 0的强收敛
By the compactness theorem ~\cite[Theorem III.2.3]{Temam77}, we have
\[
  \bm{u}_{\ep} \to \bm{u} \quad\text{ in } L^2(0, T; \bm H_{\sigma}).
\]
Integrating \eqref{eq:NS-VIe} over $[0,T]$, we have
\begin{equation}\label{eq:w-est-5}
  \int_0^T \Big(\langle\bm{u}_\ep',\bm{v}-\bm{u}_\ep\rangle + a_1(\bm{u}_\ep,\bm{u}_\ep ,\bm{v}-\bm{u}_\ep) + a_0(\bm{u}_\ep ,\bm{v}-\bm{u}_\ep) + j_\ep (\bm{v})-j_\ep (\bm{u}_\ep ) \Big)~dt \geq \int_0^T \langle \bm{f},\bm{v}-\bm{u}_\ep\rangle~dt. 
\end{equation}
Here, we only estimate $a_1$, and the estimates of other terms are the same as in Theorem \ref{th:S-wp-u}.
\[
\begin{aligned}
  & \lim_{\ep \to 0}\int_0^T a_1(\bm{u}_\ep,\bm{u}_\ep ,\bm{v}-\bm{u}_\ep)~dt = \lim_{\ep \to 0} \int_0^T a_1(\bm{u}_\ep, \bm{u}_\ep, \bm{v})~dt  \\ 
  & \quad = \int_0^T a_1(\bm{u}, \bm{u}, \bm{v})~dt = \int_0^T a_1(\bm{u}, \bm{u}, \bm{v} -\bm{u})~dt \quad  (\text{ by \eqref{eq:a1=0} and \eqref{eq:cong-a1}})
\end{aligned}
\]  
Taking the lower limit for \eqref{eq:w-est-5} and applying Lebesgue's convergence theorem, we conclude  \eqref{eq:NS-VI-sig}. 

%%%
Then, we prove the uniqueness of weak solution in 2D. 
Suppose {\bf(NS-VI)$_\sigma$} admits two solutions $\bm{u}_1$ and $\bm{u}_2$. Set $\bm{w} = \bm{u}_1-\bm{u}_2$. 
Since $\bm u' \in L^2(0, T; \bm V_\sigma')$ holds in 2D, we can test \eqref{eq:NS-VI-sig} of $\bm u_1$ (resp. $\bm u_2$) by $\bm u_2$ (resp. $\bm u_1$),
Then adding the two inequalities together, we obtain 
\begin{equation}\label{eq:2d-uni-1}
\int_0^T \int_\Omega \bm{w}' \cdot \bm{w}~dxdt + \int_0^T a_0(\bm{w}, \bm{w})~dt + \int_0^T a_1(\bm w,\bm u_1, \bm w) ~dt \le 0. 
\end{equation}
Together with \eqref{eq:a1-2d} and Young's inequality, we get
\[
  \begin{aligned}
    \frac{1}{2}\frac{d}{dt} \|\bm w\|^2 +c \|\bm w\|_{\bm H^1}^2 &\le C \|\bm w\|_{ \bm L^2} \|\bm w\|_{\bm H^1}\|\bm{u}_1\|_{\bm H^1} \\ 
    & \le c \|\bm w\|_{\bm H^1}^2+ C\|\bm w\|_{\bm L^2}^2\|\bm{u}_1\|_{\bm H^1}^2.
  \end{aligned}
\]
By Gronwall's lemma, we obtain
\[
  \|\bm w(t)\|^2 \le e^{\int_0^T C\|\bm{u}_1\|_{\bm H^1}^2~dt}  \|\bm w(0)\|^2.
\]
Since $\bm{u}_1 \in L^2 (0, T; \bm V_\sigma)$ and $\bm w(0) =0$, we conclude $\bm{u}_1(t) = \bm{u}_2(t)$ for a.e. $t\in [0,T]$.
\end{proof}
%%%

%%%
\begin{remark}\label{rk:no-unique}
  In 3D case, we only have $\bm u' \in L^{\frac{4}{3}}(0, T; \bm V_\sigma')$, which means $\bm u$ cannot be a test function for \eqref{eq:NS-VI-sig}. 
Thus, it is difficult to guarantee the uniqueness of weak solution in 3D.
\end{remark}
%%%

%%%%%%%%
\subsection{The existence theorem of strong solution}
In this section, we consider the Ladyzhenskaya type strong solution.
We assume that 
\begin{equation}\label{eq:ass-NS}
\begin{aligned}
& \bm f \in H^1(0, T; \bm L^2(\Omega)), ~g \in  H^1(0,T; L^2(\Gamma) ) \text{ with } g(0) \in H^1 (\Gamma), \\ 
& \bm{u}_0 \in  \cap \bm V_\sigma \cap \bm H^2(\Omega_a) \text{ sloves } \eqref{eq:VI_u0} \text{ with some } \bm h \in \bm L^2(\Omega). 
\end{aligned}
\end{equation}
%

%%%
\begin{theorem}\label{th:NS-strong}
Under the assumptions \eqref{eq:ass-NS}, 
we replace the initial condition of {\bf(NS-VE)$_{\sigma,\ep}$} with $\bm u_\ep(0) = \bm u_{0\ep}$ where $\bm u_{0\ep}$ is the solution of \eqref{eq:VE_u0e}. 
In addition to the result of Theorem \ref{th:NS-wp-ue}, the weak solution $\bm u_\ep$ of {\bf(NS-VE)$_{\sigma,\ep}$} also satisfies 
\begin{equation}\label{eq:NS-strong}
\bm{u}_\ep \in L^\infty(0, T; \bm V_\sigma), ~\bm{u}_\ep' \in L^\infty(0, T; \bm H_\sigma) \cap L^2(0, T; \bm V_\sigma), 
\end{equation}
when $d=2$. ~\eqref{eq:NS-strong} holds for a smaller time interval $[0, T_1]$ when $d=3$.

The same regularity holds for the solution $\bm u$ of {\bf(NS-VI)$_{\sigma}$} and it is unique.

\end{theorem}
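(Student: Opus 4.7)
The plan is to mirror the Galerkin-based proof of Theorem~\ref{th:S-wp-III} for the Stokes case, with the principal extra work being the control of the trilinear form $a_1$. I would run the Galerkin scheme \eqref{eq:w-VEem}, but with modified initial datum $\bm u_\ep^m(0) = \bm u_{0\ep}^m$, where $\bm u_{0\ep}^m \to \bm u_{0\ep}$ in $\bm V_\sigma$. Then, differentiating \eqref{eq:w-VEem-a} in $t$, multiplying by $c_k'(t)$, and summing over $k$ yields
\begin{equation*}
\tfrac{1}{2}\tfrac{d}{dt}\|\bm u_\ep^{m\prime}\|^2 + a_0(\bm u_\ep^{m\prime},\bm u_\ep^{m\prime}) + a_1(\bm u_\ep^{m\prime},\bm u_\ep^m,\bm u_\ep^{m\prime}) + a_1(\bm u_\ep^m,\bm u_\ep^{m\prime},\bm u_\ep^{m\prime}) + (\text{friction terms}) = (\bm f',\bm u_\ep^{m\prime}).
\end{equation*}
By \eqref{eq:a1=0} the second trilinear term vanishes; by \eqref{eq:beta_ep} the Hessian friction term is nonnegative; and $|\alpha_\ep| \le 1$ controls the $g'$-contribution. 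Only $|a_1(\bm u_\ep^{m\prime},\bm u_\ep^m,\bm u_\ep^{m\prime})|$ remains to handle, which I bound via \eqref{eq:a1-2d}--\eqref{eq:a1-3d} and Young's inequality to obtain $\tfrac{c}{2}\|\bm u_\ep^{m\prime}\|_{\bm H^1}^2 + C\|\bm u_\ep^{m\prime}\|^2\phi(t)$, with $\phi = \|\nabla\bm u_\ep^m\|^2$ in 2D and $\phi = \|\nabla\bm u_\ep^m\|^4$ in 3D.

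Next, I estimate $\|\bm u_\ep^{m\prime}(0)\|$ in the same spirit as the Stokes case: evaluate the Galerkin ODE at $t=0$, test with $\bm u_\ep^{m\prime}(0) \in \bm V_\sigma$, and subtract \eqref{eq:VE_u0e} tested against the same function. All linear and friction terms collapse against $(\bm h, \bm u_\ep^{m\prime}(0))$, leaving
\begin{equation*}
\|\bm u_\ep^{m\prime}(0)\|^2 = (\bm f(0) - \bm h,\, \bm u_\ep^{m\prime}(0)) - a_1(\bm u_{0\ep},\bm u_{0\ep},\bm u_\ep^{m\prime}(0)).
\end{equation*}
The nonlinear piece is bounded via Lemma~\ref{la:est-u0} and the embedding $\bm H^2(\Omega_a) \hookrightarrow \bm L^\infty$, yielding $\|\bm u_\ep^{m\prime}(0)\| \le C(\|\bm f(0)\| + \|\bm h\| + \|\bm u_{0\ep}\|_{\bm H^2(\Omega_a)}^2)$, uniformly in $\ep, m$.

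Gronwall's lemma then produces the desired bound on $\bm u_\ep^{m\prime}$ in $L^\infty(0,T;\bm H_\sigma) \cap L^2(0,T;\bm V_\sigma)$. In 2D the exponential factor involves only $\int_0^T \|\nabla\bm u_\ep^m\|^2\,d\tau$, controlled by the weak energy estimate \eqref{eq:NS-est-1}, so the bound is global. In 3D the factor involves $\int_0^T \|\nabla\bm u_\ep^m\|^4\,d\tau$, which is \emph{not} controlled by the weak theory; this is the main obstacle. I would overcome it by a standard continuity/bootstrap argument: choose $T_1 > 0$ small enough (depending on $\|\bm u_{0\ep}\|_{\bm V_\sigma}$ and the data norms) that $\|\bm u_\ep^m(t)\|_{\bm V_\sigma}$ cannot exit a prescribed ball on $[0,T_1]$, so the estimate self-closes locally. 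Once $\bm u_\ep^{m\prime} \in L^2(\bm V_\sigma)$ is secured, the fundamental theorem of calculus together with $\bm u_\ep^m(0) \in \bm V_\sigma$ gives $\bm u_\ep^m \in C([0,T];\bm V_\sigma)$ (respectively $C([0,T_1];\bm V_\sigma)$ in 3D).

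Finally, the passage $m\to\infty$ and then $\ep\to 0$ proceeds exactly as in Theorems~\ref{th:NS-wp-ue}--\ref{th:NS-wp-u}; the new $L^2(\bm V_\sigma)$-bound on $\bm u_\ep'$ survives weak limits, and $\bm u_{0\ep}\to\bm u_0$ in $\bm V_\sigma$ from Lemma~\ref{la:est-u0} handles the initial condition. For uniqueness, the higher regularity $\bm u' \in L^2(0,T;\bm V_\sigma)$ permits one solution to be used as a test function in \eqref{eq:NS-VI-sig} for the other; subtracting the two inequalities and using \eqref{eq:a1=0} produces, for $\bm w = \bm u_1 - \bm u_2$,
\begin{equation*}
\tfrac{1}{2}\tfrac{d}{dt}\|\bm w\|^2 + c\|\bm w\|_{\bm H^1}^2 \le |a_1(\bm w,\bm u_1,\bm w)|,
\end{equation*}
which combined with the same Ladyzhenskaya-type bound as above and Gronwall (with $\bm w(0) = 0$) closes the uniqueness argument, globally in 2D and on $[0,T_1]$ in 3D.
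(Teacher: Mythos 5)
Your proposal is correct and follows essentially the same route as the paper: Galerkin approximation, differentiation in time with \eqref{eq:a1=0} killing one trilinear term, the $\|\bm u_\ep^{m\prime}(0)\|$ bound obtained by subtracting \eqref{eq:VE_u0e} and invoking the $\bm H^2$ estimate of Lemma~\ref{la:est-u0}, Gronwall (global in 2D, local in 3D), and the Ladyzhenskaya-type uniqueness argument. The only cosmetic difference is in 3D: the paper absorbs $\gamma\|\bm u_\ep^m\|_{\bm H^1}\|\bm u_\ep^{m\prime}\|_{\bm H^1}^2$ directly into the coercive term on the interval where $\|\bm u_\ep^m(t)\|_{\bm H^1}$ stays below a threshold and bounds the exit time $T_1$ from below, whereas you apply full Young's inequality and close via a bootstrap --- the same mechanism in slightly different clothing.
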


%%%
\begin{proof}
In the same way as the proof of Theorem~\ref{th:NS-wp-ue}, we utilize the Galerkin approximation method, and the conclusion remains valid. We proceed with (Step 2) of the proof.

Differentiate \eqref{eq:w-VEem-a} w.r.t. $t$  and multiply it by $c_k'(t) $ to get
\begin{equation}\label{eq:dif-t-1}
  \begin{aligned}
    &\frac{1}{2}\frac{d}{dt}\|\bm{u}_\ep^{m\prime}(t)\|^2 +a_0(\bm{u}_\ep^{m\prime} ,\bm{u}_\ep^{m\prime})+a_1(\bm{u}_\ep^{m\prime},\bm{u}_\ep^m ,\bm{u}_\ep^{m\prime}) + \int_\Gamma g'\alpha_\ep(\bm{u}_\ep^m)\cdot \bm{u}_\ep^{m\prime} ~ds  \\
    &\qquad +\int_\Gamma g(\bm{u}_\ep^{m\prime})^\top \beta(\bm{u}_\ep^m)  \bm{u}_\ep^{m\prime} ~ds= (\bm f',\bm{u}_\ep^{m\prime}),
  \end{aligned}
\end{equation}
where $a_1(\bm{u}_\ep^{m},\bm{u}_\ep^{m\prime} ,\bm{u}_\ep^{m\prime})$ vanish by using \eqref{eq:a1=0}.

Each term of \eqref{eq:dif-t-1} is calculated as follows:
\begin{subequations}\label{eq:trem3-6}
  \begin{align}
    \begin{split} \label{eq:term3}
      a_1(\bm{u}_\ep^{m\prime},\bm{u}_\ep^m ,\bm{u}_\ep^{m\prime}) &\leq C\|\bm{u}_\ep^{m\prime}\|_{\bm L^2}\|\bm{u}_\ep^{m\prime}\|_{\bm H^1}\|\bm{u}_\ep^{m}\|_{\bm H^1} \quad (\text{by }\eqref{eq:a1-2d}) \\
      &\leq \frac{c}{6} \|\bm{u}_\ep^{m\prime}\|_{\bm H^1}^2+C\|\bm{u}_\ep^{m}\|_{\bm H^1}\|\bm{u}_\ep^{m\prime}\|_{\bm L^2}^2,
    \end{split} \\
    \begin{split} \label{eq:term4}
      \int_\Gamma g'\alpha_\ep(\bm{u}_\ep^m)\cdot \bm{u}_\ep^{m\prime} ~ds &\leq\|g'\|_{L^2(\Gamma)} \| \bm{u}_\ep^{m\prime} \|_{\bm L^2(\Gamma)}  \quad (\text{ by }|\alpha_\ep(\bm z)| \le 1)\\
      &\leq C\|g'\|_{L^2(\Gamma)}\| \bm{u}_\ep^{m\prime} \|_{\bm H^1} \leq \frac{c}{6}\| \bm{u}_\ep^{m\prime} \|_{\bm H^1}^2+C\|g'\|_{L^2(\Gamma)}^2, 
    \end{split}\\ 
    ( \bm f',\bm{u}_\ep^{m\prime}) &\leq \| \bm f'\|_{L^2}\|\bm{u}_\ep^{m\prime}\|_{\bm H^1} \leq \frac{c}{6}\|\bm{u}_\ep^{m\prime}\|_{\bm H^1}^2+C\| \bm f'\|^2.\label{eq:term6}
  \end{align}
\end{subequations}
Inserting \eqref{eq:trem3-6} into \eqref{eq:dif-t-1}, together with $\int_{\Gamma} g(\bm{u}_{\ep}^{m \prime})^\top \beta(\bm{u}_{\ep}^m) \bm{u}_{\ep}^{m \prime} d s \geq 0$, we obtain
\begin{equation}\label{eq:dif-t-2}
  \frac{d}{dt}\|\bm{u}_\ep^{m\prime}(t)\|^2+c \|\bm{u}_\ep^{m\prime}\|_{\bm H^1}^2\leq C(\| \bm f'\|^2+\|g'\|^2+\|\bm{u}_\ep^{m\prime}\|^2\|\bm{u}_\ep^{m}\|_{\bm H^1}).
\end{equation}
Neglecting the second term on the left side of \eqref{eq:dif-t-2} and using the Gronwall lemma, we get 
\begin{equation}\label{eq:dif-t-3}
  \|\bm{u}_\ep^{m\prime}\|^2 \leq e^{\int_0^{T}C\|\bm{u}_\ep^{m}\|_{\bm H^1}~dt } \big(\|\bm{u}_\ep^{m\prime}(0)\|^2+C\int_0^{T}(\| \bm f'\|^2+\|g'\|^2)~dt \big)
\end{equation}
Next, we consider the boundedness of  $\|\bm{u}_\ep^{m\prime}(0)\|_{L^2}^2$. Multiplying \eqref{eq:w-VEem-a} by $c_k'(t) $ and summing for $k=1,\ldots, m$, then taking $t=0$, we get 
\begin{equation}\label{eq:est-u0-2}
  \|\bm{u}_{\ep}^{m\prime}(0)\|^2+a_1(\bm{u}_{0\ep},\bm{u}_{0\ep} ,\bm{u}_{\ep}^{m\prime}(0))+a_0(\bm{u}_{0\ep} ,\bm{u}_{\ep}^{m\prime}(0))+\int_\Gamma g(0)\alpha_\ep(\bm{u}_{0\ep})\cdot \bm{u}_{\ep}^{m\prime}(0)~ds =( \bm f(0),\bm{u}_{\ep}^{m\prime}(0)).
\end{equation}
The second term on the left-hand of \eqref{eq:est-u0-2} is calculated as follows:
\begin{equation}\label{eq:est-term2}
  \begin{aligned}
    |a_1(\bm{u}_{0\ep},\bm{u}_{0\ep},\bm{u}_{\ep}^{m\prime}(0))| & \le C\|\bm{u}_{0\ep} \|_{\bm L^\infty} \|\bm{u}_{0\ep} \|_{\bm H^1} \|\bm{u}_\ep^{m\prime}(0)\| \\
    &\le C\|\bm{u}_{0\ep} \|_{\bm H^2}^2 \|\bm{u}_\ep^{m\prime}(0)\| \quad(\text{by Sobolev inequality} ) \\
    & \le C(\|\bm h\|+\|g(0)\|_{H^1(\Gamma)})^2\|\bm{u}_\ep^{m\prime}(0)\| \quad (\text{by } ~\eqref{eq:u0e-H2}).
  \end{aligned}
\end{equation}
Combining \eqref{eq:prf-S-wp-III-3'}, \eqref{eq:est-term2} with \eqref{eq:est-u0-2}, we have 
\[
  \|\bm{u}_\ep^{m\prime}(0)\| \le \| \bm f(0)\|+\|\bm h\|+C(\|\bm h\|+\|g(0)\|_{H^1(\Gamma)})^2,
\]
which results in the boundedness of $\|\bm{u}_\ep^{m\prime}(0)\| $. Thus, we conclude $\bm{u}_\ep^{m\prime} \in L^\infty(0, T; \bm H_\sigma)$ from \eqref{eq:dif-t-3}.

%%%
In addition, \eqref{eq:NS-est-0} can be re-written as 
\[
  c\|\bm{u}_\ep^{m}(t)\|_{\bm H^1}^2 \le \frac{1}{c}\| \bm f\|^2- 2(\bm{u}_\ep^{m\prime}, \bm{u}_\ep^{m})  \le \frac{1}{c}\| \bm f\|^2 + 2\|\bm{u}_\ep^{m\prime}\| \|\bm{u}_\ep^{m}\|,
\]
which implies that $\|\bm{u}_\ep^{m}\|_{L^\infty}(0,T;\bm V_\sigma) $ is bounded by $C( \bm f,g,\bm{u}_0)$.

Integrating \eqref{eq:dif-t-2} over $[0,T]$, we get 
\[
  \begin{aligned}
  & \sup_{t \in [0,T]}\|\bm{u}_\ep^{m\prime}(t)\|^2+ c \int_0^T\|\bm{u}_\ep^{m\prime}\|_{\bm H^1}^2 ~dt  \le C\int_0^T \big( \| \bm f'\|^2+\|g'\|^2 + \|\bm{u}_\ep^{m\prime}\|^2\|\bm{u}_\ep^{m}\|_{\bm H^1} \big)~dt \\ 
  & \quad \le C\big(\|\bm f\|^2_{H^1(0, T; \bm L^2)} + \|g '\|^2_{H^1(0,T; L^2(\Gamma))} + \|\bm{u}_\ep^{m\prime}\|_{L^2(0, T; \bm L^2)} + \|\bm{u}_\ep^{m}\|_{L^\infty(0, T; \bm H^1)}\big),
  \end{aligned}
\]
which results in $\bm{u}_\ep^{m\prime} \in L^2(0,T;\bm V_\sigma)$.

%%%
The discussion before \eqref{eq:term3} and the boundedness of $\|\bm{u}_\ep^{m\prime}(0)\|$ hold for both 2D and 3D cases. We re-calculate \eqref{eq:term3} for $d=3$ as follows:
\[
  \begin{aligned}
      a_1(\bm{u}_\ep^{m\prime},\bm{u}_\ep^m ,\bm{u}_\ep^{m\prime}) &\leq C\|\bm{u}_\ep^{m\prime}\|_{\bm L^2}^{\frac{1}{2}}\|\bm{u}_\ep^{m\prime}\|_{\bm H^1}^{\frac{3}{2}}\|\bm{u}_\ep^{m}\|_{\bm H^1} \quad (\text{by }\eqref{eq:a1-3d}) \\
      &\leq \gamma \|\bm{u}_\ep^m\|_{\bm H^1}\|\bm{u}_\ep^{m\prime}\|_{\bm H^1}^2+C\|\bm{u}_\ep^{m}\|_{\bm H^1}\|\bm{u}_\ep^{m\prime}\|^2. 
  \end{aligned}
\]
We choose $\gamma$ small enough such that $\gamma\|\bm{u}_{0\ep}\|_{\bm H^1} \le \frac{c}{12}$. Let $T_1= \max\{ t:\gamma\|\bm{u}_\ep^m (t)\|_{H^1} \le \frac{c}{6}\}$. We have
\[
  \begin{aligned}
    \frac{c}{12 \gamma} &\le \|\bm{u}_\ep^m (T_1)\|_{\bm H^1}-\|\bm{u}_{0\ep}\|_{\bm H^1} \le \|\bm{u}_\ep^m (T_1)-\bm{u}_{0\ep}\|_{\bm H^1} =\|\int_0^{T_1} \bm{u}_\ep^{m\prime} (t) ~dt \|_{\bm H^1} \\
    &\le \int_0^{T_1} \| \bm{u}_\ep^{m\prime} (t) \|_{\bm H^1}~dt  \le \sqrt{T_1} \|\bm{u}_\ep^{m\prime} \|_{L^2(0,T;\bm H^1)}.
  \end{aligned}
\]
It means that $T_1$ has a lower bound independently of $m$ and $\ep$.
Thus, \eqref{eq:dif-t-2} holds for $[0,T_1]$, which yields the boundedness of $\|\bm{u}_\ep^{m\prime}\|_{L^2(0,T_1; \bm V_\sigma) \cap L^\infty (0,T_1; \bm L^2)}$ and $\|\bm{u}_\ep^m\|_{L^\infty(0,T_1; \bm V_\sigma)}$.

%%%
Finally, we pass to limits $m \to \infty$ and $\ep \to 0$ as in Theorem  \ref{th:NS-wp-ue} and \ref{th:NS-wp-u}. Hence, $\bm u_\ep$(resp. $\bm u$) solves {\bf(NS-VE)$_{\sigma \ep}$} (resp. {\bf(NS-VI)$_\sigma$}).

%%%
As in the 2D weak solution, \eqref{eq:2d-uni-1} still holds. The difference is that we estimate the nonlinear term $a_1$ using \eqref{eq:a1-3d} 
\[
  \frac{1}{2}\frac{d}{dt} \|\bm w\|^2 +c \|\bm w\|_{\bm H^1}^2 \le C \|\bm w\|^{\frac{1}{2}} \|\bm w\|_{\bm H^1}^{\frac{3}{2}}\|\bm{u}_1\|_{\bm H^1}  \le c \|\bm w\|_{\bm H^1}^2+ C\|\bm w\|^2\|\bm{u}_1\|_{\bm H^1}^4.
\]
By Gronwall's lemma, we obtain
\[
  \|\bm w(t)\|^2 \le e^{\int_0^T C\|\bm{u}_1\|_{\bm H^1}^4~dt}  \|\bm {w}(0)\|^2.
\]
Since $\bm w(0)=0$ and $\bm{u}_1 \in L^\infty (0, T; \bm V_\sigma)$, we conclude $\bm{u}_1(t) = \bm{u}_2(t)$ for a.e. $t \in [0,T]$.

\end{proof}
%%%

%%%
\begin{proposition}\label{th:wp-p}
  Let $\bm{u}$ be the strong solution of {\bf(NS-VI)$_\sigma$}, and there exists an associated pressure $p \in L^\infty(0,T;\mr{Q} )$ such that $(\bm u, p)$ solves \eqref{eq:NS-P}. 
\end{proposition}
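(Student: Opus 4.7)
The plan is to promote the pointwise-in-time equivalence of Proposition~\ref{prop:equi-NS} to an $L^\infty(0,T;\mr Q)$ bound by exploiting the strong-solution regularity $\bm u\in L^\infty(0,T;\bm V_\sigma)$, $\bm u'\in L^\infty(0,T;\bm H_\sigma)$ from Theorem~\ref{th:NS-strong} together with $\bm f\in H^1(0,T;\bm L^2)\hookrightarrow C([0,T];\bm L^2)$.

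First I would substitute $\bm v=\bm u\pm\bm w$ with arbitrary $\bm w\in\bm V_\sigma^0$ into \eqref{eq:NS-VI-sig}; since $\bm w|_\Gamma=0$ the jumps $j(t;\bm v)-j(t;\bm u)$ cancel, yielding the equality
\[
\langle \bm u'(t),\bm w\rangle+a_0(\bm u(t),\bm w)+a_1(\bm u(t),\bm u(t),\bm w)=\langle \bm f(t),\bm w\rangle\quad \forall\,\bm w\in\bm V_\sigma^0
\]
for a.e.\ $t$. Applying the inf-sup condition~\eqref{eq:is1} separately on $\Omega_{\mathrm{in}}$ and $\Omega_{\mathrm{out}}$, exactly as in the proof of Theorem~\ref{th:equi-S}, produces a unique $\mr p(t)=(\mr p_{\mathrm{in}}(t),\mr p_{\mathrm{out}}(t))\in L^2_0(\Omega_{\mathrm{in}})\times L^2_0(\Omega_{\mathrm{out}})$ extending the identity to all $\bm w\in\bm V^0$ after adding $b(\bm w,\mr p)$. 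Using inf-sup together with~\eqref{a1-2d-3d} gives the pointwise bound
\[
\|\mr p(t)\|_{L^2(\Omega)}\le C\bigl(\|\bm u'(t)\|+\|\bm u(t)\|_{\bm H^1}+\|\bm u(t)\|_{\bm H^1}^2+\|\bm f(t)\|\bigr),
\]
whose right-hand side lies in $L^\infty(0,T)$ by Theorem~\ref{th:NS-strong} and the Sobolev embedding on $\bm f$, hence $\mr p\in L^\infty(0,T;L^2(\Omega))$.

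To recover a pressure with zero mean on the whole of $\Omega$, I would then run the Hahn-Banach step of Theorem~\ref{th:equi-S} pointwise: the stress jump $\mr{\bm\lambda}(t)\in\bm\Lambda_{-1/2}$ defined via \eqref{eq:def-sign} (with $\mr p$ in place of $p$) extends to $\bm\lambda(t)\in\bm L^\infty_{1/g(t)}(\Gamma)$ with $|\bm\lambda(t)|\le g(t)$ a.e.\ and $\bm\lambda(t)\cdot\bm u(t)=g(t)|\bm u(t)|$; the residual $\mr{\bm\lambda}-\bm\lambda=\delta(t)\bm n$ defines a scalar $\delta(t)$, and I set
\[
p(t):=(\mr p_{\mathrm{in}}+k_{\mathrm{in}}(t),\,\mr p_{\mathrm{out}}+k_{\mathrm{out}}(t))\in\mr Q,
\]
with $k_{\mathrm{out}}(t)-k_{\mathrm{in}}(t)=\delta(t)$ and $(k_{\mathrm{in}},1)_{\Omega_{\mathrm{in}}}+(k_{\mathrm{out}},1)_{\Omega_{\mathrm{out}}}=0$. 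Then $(\bm u,p)$ satisfies all the equations of \eqref{eq:NS-P}.

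The main obstacle I expect is the $L^\infty(0,T)$ control of the scalar $\delta(t)$, which is needed to bound the piecewise constants $k_{\mathrm{in}}(t), k_{\mathrm{out}}(t)$ (and hence $\|p(t)\|_{L^2}$) uniformly in $t$, together with the measurability of $t\mapsto\bm\lambda(t)$ produced by Hahn-Banach. The remark after Theorem~\ref{th:equi-S} supplies the pointwise bound $|\delta(t)|\le 2\,\mathrm{ess\,inf}_{\Gamma}g(t)$, so $\delta\in L^\infty(0,T)$ under the $L^\infty$-in-space regularity of $g$ implicit in the standing hypotheses of Section~\ref{sec:4}; for measurability I would select the Hahn-Banach extension canonically, e.g.\ as the unique element of minimal $L^2(\Gamma)$-norm in the closed convex admissible set, which depends continuously on $(\bm u,\bm u',\bm f,g)$ and so inherits measurability from the data. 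Once these two points are settled the proposition follows.
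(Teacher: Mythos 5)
Your proposal is correct in substance but takes a genuinely more roundabout route than the paper. The paper's proof is essentially one estimate: since Proposition~\ref{prop:equi-NS} already supplies a pressure $p(t)\in\mr{Q}$ together with the multiplier $[\bm\sigma\bm n](t)$ satisfying $|[\bm\sigma\bm n](t)|\le g(t)$ a.e.\ on $\Gamma$, one applies the inf-sup condition \eqref{eq:is1} \emph{globally} on $\Omega$ to the defining relation \eqref{eq:def-sign}, testing with $\bm v\in\bm V$ (not merely $\bm V^0$), and bounds the interface term by $\|g(t)\|_{L^2(\Gamma)}\|\bm v\|_{\bm\Lambda}\le C\|g(t)\|_{L^2(\Gamma)}\|\bm v\|_{\bm H^1}$. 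This yields $C_1\|p(t)\|_Q\le C(\|\bm f(t)\|+\|\bm u'(t)\|+\|\bm u(t)\|_{\bm H^1}^2+\|\bm u(t)\|_{\bm H^1}+\|g(t)\|_{L^2(\Gamma)})$ in a single step; because the global inf-sup uses test functions that do not vanish on $\Gamma$, it automatically controls the piecewise-constant (jump) part of $p$, so the scalar $\delta(t)$ never has to be isolated. Your route instead applies inf-sup on $\Omega_{\mathrm{in}}$ and $\Omega_{\mathrm{out}}$ separately, which bounds only $\mr p(t)$ and forces you to control the jump constant $\delta(t)$ and the measurability of the Hahn--Banach selection by hand --- precisely the two obstacles you flag. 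These are resolvable (for the bound on $\delta$, the clean argument is the one used in Appendix~\ref{App:1}, namely $|\Gamma|\,|\delta(t)|=|\langle\bm\lambda-\mr{\bm\lambda},\bm n\rangle|\le\|g(t)\|_{L^1(\Gamma)}+C\|\mr{\bm\lambda}(t)\|_{\bm\Lambda_{-1/2}}$, rather than the remark after Theorem~\ref{th:equi-S}, whose triangle-inequality step presupposes a pointwise bound $|\mr{\bm\lambda}|\le g$ that is not actually established for the unextended functional), but they are extra work that the paper's global inf-sup sidesteps entirely. What your approach buys is a more explicit constructive description of $p(t)$ and of the multiplier; what the paper's buys is brevity and immunity to the selection/measurability issue for $\delta(t)$, though the measurability of $t\mapsto p(t)$ itself is left implicit in both arguments.
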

%%%

\begin{proof}
Apply the inf-sup condition \eqref{eq:is1} to \eqref{eq:def-sign}, together with \eqref{a1-2d-3d}, we have
\[
  \begin{aligned}
    C_1\|p\|_Q \le& \sup_{\bm{v}\in \bm V} \frac{b(\bm v,p)}{\|\bm{v}\|_{\bm H^1(\Omega)}} \\ 
    =& \sup_{\bm{v}\in \bm V} \frac{( \bm f,\bm{v}) -(\bm{u}',\bm{v})-a_1(\bm{u},\bm{u},\bm{v})-a_0(\bm{u},\bm{v})-\langle [\bm \sigma \bm n], \bm v \rangle_{\bm \Lambda_{1/2}}}{\|\bm{v}\|_{\bm H^1(\Omega)}}\\ 
    \le& C(\|\bm f\| +\|\bm{u}'\| + \|\bm{u}\|^2_{\bm H^1}+\|\bm{u}\|_{\bm H^1} +\|g\|_{L^2(\Gamma)}),
  \end{aligned}
\]
which is uniformly bounded in $t$. Thus we conclude that $p \in L^\infty(0,T;\mr{Q} )$.
\end{proof}

\bibliographystyle{plain}
\normalem
\bibliography{reference}

\newpage
\appendix
\section{proof of Lemma~\ref{la:est-u0}}\label{App:1}
%%%
\begin{proof}
The proof is divided into three steps. We prove the unique existence of $\bm u_0, \bm u_{0\ep}$ and $p_0, p_{0\ep}$ in (Step 1) . 
The approach is similar to \cite{zhou23},  where the friction leak interface condition \eqref{eq:leak} of the Stokes problem is considered.
Then, we derive the convergence of $\bm u \to \bm u_{0\ep}$ in (Step 2).  Finally, (Step 3) gives the $H^2$-regularity of $\bm u_0$ and $\bm u_{0\ep}$.

%%%
{(Step 1) \bf The unique existence of $\bm u_0$ and $\bm u_{0\ep}$.}
Substituting $\bm v=0, 2\bm u_0$ into \eqref{eq:VI_u0}, together with ~\eqref{eq:a0} and $g(0) >0 $ on $\Gamma$, we have
\[
c\|\bm u_0\|_{\bm H^1}^2 \le a_0(\bm u_0,\bm u_0) +\int_\Gamma g(0)|\bm u_0| ~ds =(\bm h,\bm u_0) \le \|\bm h\| \|\bm u_0\| \le C  \|\bm h\| \|\bm u_0\|_{\bm H^1},
\] 
which implies $ \|\bm u_0\|_{\bm H^1} \le C \|\bm h \|$.

Substituting $\bm v=\bm u_{0\ep}$ into \eqref{eq:VI_u0e}, in views of \eqref{eq:alpha_ep}, we get
\[
c\|\bm u_{0\ep}\|_{\bm H^1}^2 \le a_0(\bm u_{0\ep},\bm u_{0\ep}) +\int_\Gamma g(0)\alpha_\ep(\bm u_{0\ep}) \cdot \bm u_{0\ep} ~ds =(\bm h,\bm u_{0\ep}) \le C  \|\bm h\| \|\bm u_{0\ep}\|_{\bm H^1}.
\] 
Therefore, $\|\bm u_{0\ep}\|_{\bm H^1} \le C \|\bm h \|$.

To show the existence of $p_0$ and $p_{0\ep}$, we apply a similar argument to the proof of Theorem~\ref{th:equi-S}.
By the inf-sup condition ~\eqref{eq:is1}, there exist unique 
\[
(\mr{p}_{0,\mathrm{in}}, \mr{p}_{0,\mathrm{out}}) =: \mr{p}_0,\quad  (\mr{p}_{0\ep,\mathrm{in}}, \mr{p}_{0\ep,\mathrm{out}}) =: \mr{p}_{0\ep} \in L_0^2(\Omega_{\mathrm{in}}) \times L_0^2(\Omega_{\mathrm{out}})
\]
such that, for all $\bm v\in \bm V^0$
\[
\begin{aligned}
    & a_0(\bm u_0,\bm v) + b(\bm v,\mr{p}_0 ) = (\bm h,\bm v), \quad && \|\mr{p}_0\|_{L^2} \le C (\|\bm u_0\|_{\bm H^1} + \|\bm h\|), \\
    & a_0(\bm u_{0\ep},\bm v) + b(\bm v,\mr{p}_{0\ep} ) = (\bm h,\bm v), \quad && \|\mr{p}_{0\ep}\|_{L^2} \le C (\|\bm u_{0\ep}\|_{\bm H^1} + \|\bm h\|).
\end{aligned}   
\]
By duality between $\bm \Lambda_{1/2}$ and $\bm \Lambda_{-1/2}$, there exist unique $\mr{\bm \lambda}_0, \mr{\bm \lambda}_{0\ep} \in \bm \Lambda_{-1/2}$ such that, for all $\bm v \in \bm V$
\[
\begin{aligned}
    & a_0(\bm u_0,\bm v) + b(\bm v,\mr{p}_0 ) +\langle \mr{\bm \lambda}_0,\bm v\rangle_{\bm \Lambda_{1/2}} =(\bm h,\bm v), \quad && \|\mr{\bm \lambda}_0\|_{\bm \Lambda_{-1/2}} \le C (\|\bm u_0\|_{\bm H^1} +\|\mr{p}_0\| + \|\bm h\|), \\ 
    & a_0(\bm u_{0\ep},\bm v) + b(\bm v,\mr{p}_{0\ep} ) + \langle \mr{\bm \lambda}_{0\ep},\bm v\rangle_{\bm \Lambda_{1/2}} =(\bm h,\bm v), \quad && \|\mr{\bm \lambda}_{0\ep}\|_{\bm \Lambda_{-1/2}} \le C (\|\bm u_{0\ep}\|_{\bm H^1} +\|\mr{p}_{0\ep}\|+ \|\bm h\|).
\end{aligned}   
\]

According to \eqref{eq:VI_u0} and \eqref{eq:VE_u0e}, we have 
\begin{subequations}\label{eq:lam}
\begin{align}
    & \langle \mr{\bm \lambda}_0,\bm v-\bm u_0\rangle_{\bm \Lambda_{1/2}} \le \int_\Gamma g(0)|\bm v|~ds -\int_\Gamma g(0) |\bm u_0|~ds  \quad \forall \bm v \in \bm V_\sigma, \label{eq:lam-a} \\ 
    & \langle \mr{\bm \lambda}_{0\ep},\bm v \rangle_{\bm \Lambda_{1/2}} = \int_\Gamma g(0)\alpha_\ep(\bm u_{0\ep})~ds  \quad \forall \bm v \in \bm V_\sigma.  \label{eq:lam-b}
\end{align}
\end{subequations}
Applying Hahn-Banach theorem, there exists a $\mr{\bm \lambda}_0 \in (\bm L_{g(0)}^1 (\Gamma))' = \bm L_{1/g(0)}^\infty (\Gamma)$ such that 
\[
|\langle \mr{\bm \lambda}_0, \bm \mu \rangle_{\bm L_{g(0)}^1 (\Gamma)}| \le \int_\Gamma g(0)|\bm \mu|~ds  \quad \forall \bm \mu \in \bm L_{g(0)}^1 (\Gamma).
\]
Since $\mr{\bm \lambda}_0 -\bm \lambda_0$ vanishes on $\mr{\bm \Lambda}_{1/2}$, there exists a constant $\delta$, such that $\mr{\bm \lambda}_0 -\bm \lambda_0 = \delta \bm{n}$.
Given $\delta$ satisfying $|\mr{\bm \lambda}_0 - \delta \bm{n}| \le g(0)$ [a.e.] on $\Gamma$, we can find unique constant pair $(k_{0,\mathrm{in}}, k_{0,\mathrm{out}})$ such that 
\begin{equation}\label{eq:k}
k_{0,\mathrm{out}}-k_{0,\mathrm{in}} = \delta, \quad |\Omega_{\mathrm{in}}|k_{0,\mathrm{in}} + |\Omega_{\mathrm{out}}|k_{0,\mathrm{out}} = 0.
\end{equation}
Set $p_0 := (\mr{p}_{0,\mathrm{in}}+ k_{0,\mathrm{in}}, \mr{p}_{0,\mathrm{out}}+ k_{0,\mathrm{out}} )$, we find that 
\begin{equation}\label{eq:p0}
a_0(\bm u_0, \bm v) + b(\bm v, p_0) +\langle \bm \lambda_0, \bm v\rangle_{\bm \Lambda_{1/2}} =(\bm h, \bm v) \quad \forall \bm v\in \bm V.
\end{equation}
Since 
\[
|\bm \lambda_0| \le g(0), \quad \int_\Gamma \bm \lambda_0 \bm u_0~ds = \int_\Gamma  \mr{\bm \lambda}_0 \bm u_0~ds = \int_\Gamma g(0) |\bm u_0|~ds,
\] 
we conclude the $(\bm u_0, p_0)$ solves 
\[
a_0(\bm u_0, \bm v-\bm u_0) + b(\bm v-\bm u_0, p_0) + \int_\Gamma g(0) |\bm v|~ds-\int_\Gamma g(0) |\bm u_0|~ds \ge (\bm h, \bm v - \bm u_0) \quad \forall \bm v\in \bm V.
\]
It's easy to see that 
\[
\langle \bm \lambda_0, \bm v\rangle \le \int_\Gamma g(0) |\bm v|~ds, \quad \langle \bm \lambda_0, \bm u_0 \rangle = \int_\Gamma g(0) |\bm u_0|~ds,
\]
which implies $\bm \lambda_0 \le g(0)$ and  $\bm \lambda_0 \bm u_0 = g(0)\bm u_0$.

If $\{ x \in \Gamma \mid \bm u_0 \cdot \bm n \neq 0\}$ has a positive measure, we assume that there is another constant $\bar{\delta}$ such that 
$\mr{ \bm \lambda}_0 +\bar{\delta}\bm n$ satisfies 
\[
(\mr{ \bm \lambda}_0 +\bar{\delta}\bm n)\cdot \bm u_0 =g(0)|\bm u_0|, \quad |\mr{ \bm \lambda}_0 +\bar{\delta}\bm n| \le g(0).
\]
We see that 
\[
(\bar{\delta}-\delta)\bm n \cdot \bm u_0 = (\mr{ \bm \lambda}_0 +\bar{\delta}\bm n)\cdot \bm u_0- (\mr{ \bm \lambda}_0 + \delta\bm n)\cdot \bm u_0 = g(0)|\bm u_0| -g(0)|\bm u_0| =0,
\]
which implies $\bar{\delta} =\delta$. Hence $\delta$ is unique, so do $\bm \lambda_0$ and $p_0$.

%%%
If $\bm u_0\cdot \bm n=0$  on $\Gamma$, then, any $\delta$ satisfies $ |\mr{ \bm \lambda}_0 - \delta\bm n| \le g(0)$ [a.e.] on $\Gamma$, we have $(\bm \lambda_0, p_0)$ solves \eqref{eq:p0}.
In fact, we have 
\[
\begin{aligned}    
|\Gamma||\delta| & = \langle \delta \bm n, \bm n\rangle= \langle \bm \lambda_0- \mr{ \bm \lambda}_0, \bm n\rangle = \langle \bm \lambda_0, \bm n\rangle - \langle \mr{ \bm \lambda}_0, \bm n\rangle \le \int_\Gamma g(0)~ds +|\langle \mr{ \bm \lambda}_0, \bm n\rangle| \\
& \le \int_\Gamma g(0)~ds + \|\mr{ \bm \lambda}_0\|_{\bm \Lambda_{-1/2}} \|\bm n\|_{\bm \Lambda_{1/2}} \le \|g(0)\|_{L^1(\Gamma)} + C \|\mr{ \bm \lambda}_0\|_{\bm \Lambda_{-1/2}} .
\end{aligned} 
\]
As a result, we obtain 
\[
\|p_0\|_{L^2} \le C(\|\bm u_0\|_{\bm V} +\|\bm h\| +\|g(0)\|_{L^1(\Gamma)}).
\]

\eqref{eq:lam-b} implies that there exists a $\delta \in \mathbb{R}$ such that $\mr{\bm \lambda}_{0 \ep} +\delta \bm n = g(0) \alpha_\ep(\bm u_{0\ep}) =:\bm\lambda_{0\ep}$ on $\Gamma$.
Then we can determine the constant pair $(k_{0,\mathrm{out}}, k_{0,\mathrm{in}})$ satisfying \eqref{eq:k}. As a result, we have 
\[
a_0(\bm u_{0\ep},\bm v) + b(\bm v,p_{0\ep} ) +\int_\Gamma g(0)\alpha_\ep(\bm u_{0\ep}) \bm v ~ds = (\bm h,\bm v) \quad \forall \bm v\in \bm V,
\]
where $p_{0\ep} = \mr{p}_{0\ep} + (k_{0,\mathrm{out}}, k_{0,\mathrm{in}})$. Again, we have (by $|\alpha_\ep(\bm z)| \le 1$) 
\[
\begin{aligned}    
|\Gamma||\delta| & = \langle \delta \bm n, \bm n\rangle= \langle \bm \lambda_{0\ep}- \mr{ \bm \lambda}_{0\ep}, \bm n\rangle = \langle \bm \lambda_{0\ep}, \bm n\rangle - \langle \mr{ \bm \lambda}_{0\ep}, \bm n\rangle 
= \langle g(0) \alpha_\ep(\bm u_{0\ep}), \bm n \rangle  - \langle \mr{ \bm \lambda}_{0\ep}, \bm n\rangle\\
& \le \int_\Gamma g(0)~ds + \|\mr{ \bm \lambda}_{0\ep}\|_{\bm \Lambda_{-1/2}} \|\bm n\|_{\bm \Lambda_{1/2}} \le \|g(0)\|_{L^1(\Gamma)} + C \|\mr{ \bm \lambda}_{0\ep}\|_{\bm \Lambda_{-1/2}} .
\end{aligned} 
\]
Hence, 
\[
\|p_{0\ep}\|_{L^2} \le C(\|\bm u_0\|_{\bm V} +\|\bm h\| +\|g(0)\|_{L^1(\Gamma)}).
\] 

{(Step 2) \bf The convergence of $\bm u_0 \to \bm u_{0\ep}$.}
Substituting $\bm v =\bm u_{0\ep} $ into \eqref{eq:VI_u0} and  $\bm v =\bm u_0$ into \eqref{eq:VI_u0e}, then adding the resulted inequalities together, we observe that
\[
\begin{aligned}
    a_0(\bm u_0-\bm u_{0\ep}, \bm u_{0\ep}-\bm u_0) & \le \int_\Gamma g(0) |\bm u_{0\ep}| ~ds- \int_\Gamma g(0) |\bm u_0| ~ds + \int_\Gamma g(0) \rho_\ep(\bm u_0)~ds  \\ 
    & \quad - \int_\Gamma g(0) \rho_\ep(\bm u_{0\ep}) ~ds  \le 2\ep \int_\Gamma g(0)~ds \le C\ep.
\end{aligned}
\]
As a result, we have 
\begin{equation}\label{eq:u0e-u0-H1}
\|\bm u_0-\bm u_{0\ep} \|_{\bm H^1} \le C\sqrt{\ep} \to 0 \text{ as }\ep \to 0.
\end{equation}

%%%%
{(Step 3) \bf The $H^2$-regularity of $\bm u_0$ and $\bm u_{0\ep}$.}
Next, we prove that $\bm u_{0\ep} \in \bm H^2(\Omega_a)$. We can find a partition of $\Omega$ (see Fig.~\ref{fig:domain_1}), denoted by 
$\{\Omega_{1i}, \Omega_2, \Omega_{3j}, \Omega_4: ~ i=1,2,\ldots,m; ~ j=1,2,\ldots,n\}$, where $\Omega_{1i}, \Omega_2, \Omega_{3j}, \Omega_4$ are open, smooth, single-connect subset of $\mathbb{R}^d$ satisfying
\[
\begin{aligned}
    & \overline{\Omega}_2 \subsetneqq \Omega_{\mathrm{out}}, \quad \overline{\Omega}_4 \subsetneqq \Omega_{\mathrm{in}}, \quad \Omega_{1i} \cap \Omega_{3j} = \emptyset (\forall i,j),\quad  \Omega_{1i} \cap \Gamma_D =\emptyset, \quad  \Omega_{3j}\cap \Gamma = \emptyset, \\ 
    & (\bigcup_{i=1}^m \Omega_{1i} \bigcup_{j=1}^n \Omega_{1j}  \cup \Omega_2 \cup \Omega_4) \supset (\overline{ \Omega_{\mathrm{in}} \cup \Omega_{\mathrm{out}} \cup \Gamma}).
\end{aligned}
\]
\begin{figure}[htbp] 
\centering
\begin{overpic}[scale=0.40]{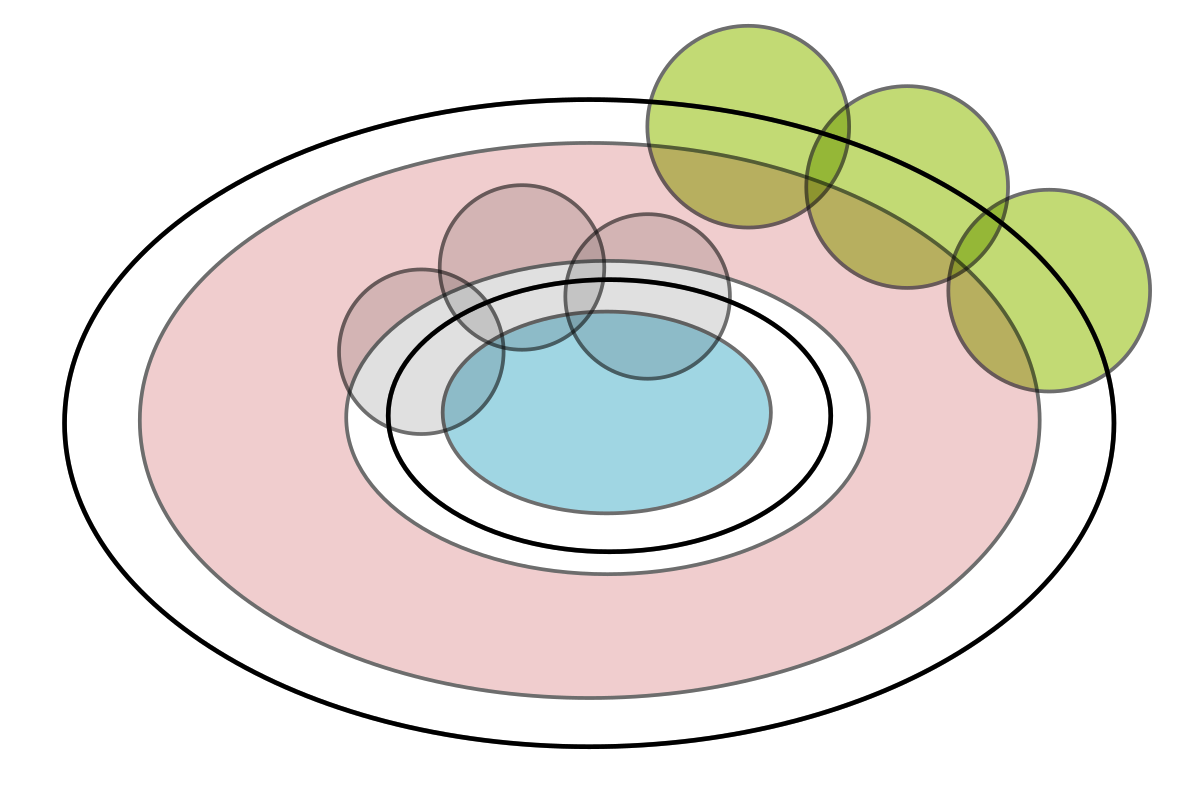}
    \put(73,49){$\Omega_{11}$}
    \put(85,40){$\Omega_{12}$}
    \put(91,31){$\ldots$}
    \put(58,55){$\Omega_{1m}$}
    \put(48,59){$\ldots$}
    \put(16,30){$\Omega_2$}
    \put(40,43){$\Omega_{31}$}
    \put(51,40){$\Omega_{32}$}
    \put(63,37){$\ldots$}
    \put(31,35){$\Omega_{3n}$}
    \put(30,28){$\ldots$}
    \put(47,30){$\Omega_4$}
    \put(66,24){$\Gamma$}
    \put(66,4){$\Gamma_D$}
\end{overpic}
\caption{The partition of  $\Omega_{\mathrm{in}} \cup \Gamma \cup \Omega_{\mathrm{out}}$}\label{fig:domain_1}
\end{figure}

The $H^2/H^1$-regularity of $(\bm u_{0\ep}, p_{0\ep})$ in $\Omega_{1i} \cap \Omega_{\mathrm{out}}, \Omega_2$ and $\Omega_4$ follows from the standard argument. 
For example, we can find a slightly bigger open set $\widetilde{\Omega}_{1i} \supsetneqq \overline{\Omega}_{1i}$, $\widetilde{\Omega}_{1i} \cap \Omega_{3j} = \emptyset (\forall j)$ and a $\theta_{1i} \in \bm C_0^\infty(\widetilde{\Omega}_{1i})$ with $0 \le \theta_{1i} \le 1$ and $\theta_{1i}=1$ in $\Omega_{1i}$.
Then we can derive the equation of $(\theta_{1i}\bm u_{0\ep}, \theta_{1i}p_{0\ep})$ in $\widetilde{\Omega}_{1i} \cap \Omega_{\mathrm{out}}$ with the Dirichlet boundary condition. 
The $H^2/H^1$-regularity of  $(\theta_{1i}\bm u_{0\ep}, \theta_{1i}p_{0\ep})$ in $\widetilde{\Omega}_{1i} \cap \Omega_{\mathrm{out}}$ follows from the standard argument.
The case of $\Omega_2$ and $\Omega_4$ is similar.

Below, we show that $\bm u_{0\ep} \in \bm H^2((\bigcup_{j}\Omega_{3j})\cap \Omega_{\mathrm{in}})$, $\bm u_{0\ep} \in \bm H^2((\bigcup_{j}\Omega_{3j})\cap \Omega_{\mathrm{out}})$, $p_{0\ep} \in H^1((\bigcup_{j}\Omega_{3j})\cap \Omega_{\mathrm{in}})$, $p_{0\ep} \in H^1((\bigcup_{j}\Omega_{3j})\cap \Omega_{\mathrm{out}})$. 
It suffices to study the regularity near $\Gamma$. Let $x_0 \in \Gamma$ and $U \subset \mathbb R_x^d$ be a neighbourhood of $x_0$. 
There is an open subset $U \subset U_0(x_0\in U)$ and a bijection  $\Phi = (\Phi_1,\ldots,\Phi_d)$ from $U$ to $\tilde U \subset \mathbb R_y^d$ enjoying the following properties
\begin{enumerate}
  \item[1.] $\Phi$ is a $C^3$-diffeomorphism,
  \item[2.] $\Phi(x_0)=0$,
  \item[3.] $\Phi(U\cap \Omega_{\mathrm{in}})= Q_1:=\{ y=(y',y_d)\in \mathbb{R}^d_y,~|y'| <R,~-R < y_d<0\}$, 
  \item[4.] $\Phi(U\cap \Omega_{\mathrm{out}})= Q_2:=\{ (y',y_d)\in \mathbb{R}^d_y,~|y'| <R,~0< y_d<R\},\quad Q:= Q_1 \cup Q_2$,
  \item[5.] $\Phi(U\cap \Gamma)= S:=\{ y=(y',0) \mid |y'| <R \}$, 
  \item[6.] $\frac{\partial \Phi_d}{\partial x_j} =\frac{\partial \Phi_j}{\partial x_d}=0 (j=1,\ldots,d-1),~ \frac{\partial \Phi_d}{\partial x_d}=1 $ on $S$.
\end{enumerate}

For briefness, we omit the subscript $0$. Setting $y= \Phi(x)=(\Phi_1(x),\ldots,\Phi_d(x))$,
\[
  \tilde {\bm u}_\ep (y)=\bm u_{0\ep}(x),~\tilde{p}_\ep(y)=p_{0\ep}(x),~\tilde{\bm h}(y)=\bm h(x),~\tilde{g}(y)=g(x).
\]
Below the summation w.r.t. $i$ or $j$ is omitted for briefness. Set the bilinear forms
\[
\begin{aligned}
& \tilde{a}_0 (\tilde{\bm u}_\ep, \tilde{\bm v}) = 2\nu \int_Q \tilde{e}_{ij}(\tilde{\bm u}_\ep) \tilde{e}_{ij}(\tilde{\bm v}) |\operatorname{Jac} \Phi|^{-1}~dy,\quad \tilde{e}_{ij}(\tilde{\bm v}) = \frac{1}{2}\sum_{k=1}^d (\frac{\partial \tilde{\bm v}_i}{\partial y_k}\frac{\partial \Phi_k}{\partial x_j}+\frac{\partial \tilde{\bm v}_j}{\partial y_k}\frac{\partial \Phi_k}{\partial x_i}), \\ 
& \tilde {b} (\tilde{\bm v}, \tilde{p}_\ep) =  -\int_Q  \tilde{p}_\ep \sum_{k} \frac{\partial \tilde{\bm v}_i}{\partial y_k}\frac{\partial \Phi_k}{\partial x_i} |\operatorname{Jac} \Phi|^{-1} ~dy. 
\end{aligned}
\]
We see that 
\begin{subequations}\label{eq:S-y}
  \begin{align}
    \tilde {a}_0 (\tilde{\bm u}_\ep, \tilde{\bm v}) + \tilde {b} (\tilde{\bm v}, \tilde{p}_\ep) + \int_S \tilde{g}(0) \alpha_\ep(\tilde {\bm u}_\ep)\tilde{\bm v} \sqrt{\mathrm{det}G}~dy' &= \int_Q \tilde{\bm h} \cdot \tilde{\bm v} |\operatorname{Jac} \Phi|^{-1}dy, \label{eq:S-y-1} \\ 
    \tilde {b} (\tilde{\bm u}_\ep, \tilde{q}) &=0, \label{eq:S-y-2}
  \end{align}
\end{subequations}
where the matrix $G = (G_{ij})_{1\le i,j\le d-1}$ denotes the Riemannian metric tensor given by $G_{ij} = \frac{\partial \Phi}{\partial x_i}\cdot \frac{\partial \Phi}{\partial x_j}$ (dot means the inner product in $\mathbb{R}^d$)~\cite{Kashiwabara15}.

Define the difference quotient operator $D_h^l$ by: 
\[
D_h^l \bm v(y) =\frac{S_h^l\bm v(y)-\bm v(y)}{|h|} \quad (h \in \mathbb{R}), \quad S_h^l\bm v(y) = \bm v(y+he_l).
\]
where $\{e_l\}_{l=1}^{d-1}$ are the canonical basis of $\mathbb R_y^d$.
The following facts are well-known
\[
\begin{aligned}
& (\bm u, D_{-h}^l \bm v)_Q = (D_h^l \bm u, \cdot \bm v)_Q, \quad \|D_h^l \bm u\|_{L^2(Q)} \le C\|\nabla_{y_l} \bm u\|_{L^2(Q)}, \\ 
& D_h^l(\bm u \bm v)= \bm u (D_h^l \bm v)+ (D_h^l \bm u)(S_h^l \bm v).
\end{aligned}
\]
Now, we take $\tilde{\bm v} =D_{-h}^l(\theta^2 D_h^l \tilde{\bm u}_\ep)$ in \eqref{eq:S-y-1} where $\theta \in \bm C^\infty(\mathbb R^d_y)$ satisfies
\[
  0 \le \theta \le 1, \quad \operatorname{supp} \theta \subset U, \quad \theta =1 \text{ in } Q_{R/2},\quad \theta = 0 \text{ for } |y'| \ge R \text{ and } |y_d| \ge R.
\]
Each term of \eqref{eq:S-y-1} is estimated as follows: 
\begin{subequations}\label{eq:S-y-est}
\begin{align}
    &\tilde {a}_0 (\tilde{\bm u}_\ep, \tilde{\bm v}) \ge c_1 \|\nabla_y(\theta D_h^l \tilde{\bm u}_\ep)\|^2 - c_2\|\nabla \tilde{\bm u}_\ep\|(\|\nabla(\theta D_h^l\tilde{\bm u}_\ep)\| +\|\nabla \tilde{\bm u}_\ep\|) , \label{eq:S-y-est-a}\\ 
    &|\tilde {b} (\tilde{\bm v}, \tilde{p}_\ep)| \le c_3\|\tilde{p}_\ep\|_Q(\|\nabla(D_h^l\tilde{\bm u}_\ep)\| + \|\nabla\tilde{\bm u}_\ep\|), \label{eq:S-y-est-b} \\ 
    & - \int_S \tilde{g}(0) \alpha_\ep(\tilde {\bm u}_\ep) \cdot \tilde{\bm v}(y',0) \sqrt{\mathrm{det}G}~dy' \le c_4\|\nabla_{y'} g(0)\|_{L^2(S)} \big( \|\tilde{\bm u}_\ep\|_{H^1}+ \|\nabla (\theta D_h^l \tilde{\bm u}_\ep)\| \big), \label{eq:S-y-est-c} \\ 
    & \int_Q \tilde{\bm h} \cdot \tilde{\bm v} |\operatorname{Jac} \Phi|^{-1}dy \le c_5 \|\tilde{\bm h} \|(\|\tilde{\bm u}_\ep\|_{H^1} + \|\nabla (\theta D_h^l \bm u_\ep)\|). \label{eq:S-y-est-d}
\end{align}
\end{subequations}

Next, we will provide a detailed estimation of \eqref{eq:S-y-est}.

%%%%
{\bf{Proof of ~\eqref{eq:S-y-est-a}.}}
\[
\begin{aligned}
    \tilde {a}_0 (\tilde{\bm u}_\ep, \tilde{\bm v}) &= \frac{\nu}{2} \int_Q \sum_{k=1}^d \bigg(\frac{\partial \tilde{\bm u}_{\ep i}}{\partial y_k}\frac{\partial \Phi_k}{\partial x_j}+\frac{\partial \tilde{\bm u}_{\ep j}}{\partial y_k}\frac{\partial \Phi_k}{\partial x_i}\bigg) 
        D_h^{-l}\sum_{k=1}^d \bigg(\frac{\partial (\theta^2 D_h^{-l}\tilde{\bm u}_{\ep i})}{\partial y_k}\frac{\partial \Phi_k}{\partial x_j} \\ 
    & \qquad +\frac{\partial (\theta^2 D_h^{-l}\tilde{\bm u}_{\ep j})}{\partial y_k}\frac{\partial \Phi_k}{\partial x_i} \bigg) |\operatorname{Jac} \Phi|^{-1}~dy \\ 
    & = \frac{\nu}{2} \int_Q D_h^l \bigg(\sum_{k=1}^d (\frac{\partial \tilde{\bm u}_{\ep i}}{\partial y_k}\frac{\partial \Phi_k}{\partial x_j}+\frac{\partial \tilde{\bm u}_{\ep j}}{\partial y_k}\frac{\partial \Phi_k}{\partial x_i})|\operatorname{Jac} \Phi|^{-1}\bigg) 
     \sum_{k=1}^d \bigg((\theta \frac{ \partial{(\theta D_h^l \tilde{\bm u}_{\ep i})}}{\partial y_k} + \theta D_h^l \tilde{\bm u}_{\ep i} \frac{\partial \theta}{ \partial {y_k}} ) \frac{\partial \Phi_k}{\partial x_j} \\ 
    & \qquad + (\theta \frac{ \partial{(\theta D_h^l \tilde{\bm u}_{\ep j})}}{\partial y_k} + \theta D_h^l \tilde{\bm u}_{\ep j} \frac{\partial \theta}{ \partial {y_k}}) \frac{\partial \Phi_k}{\partial x_i}\bigg) ~dy \\ 
    & = \frac{\nu}{2} \int_Q \bigg[ \underbrace
        {\sum_k \theta (\frac{\partial (D_h^l \tilde{\bm u}_{\ep i})}{\partial y_k}\frac{\partial \Phi_k}{\partial x_j} + \frac{\partial (D_h^l\tilde{\bm u}_{\ep j})}{\partial y_k}\frac{\partial \Phi_k}{\partial x_i} )}_{ = 2\tilde{e}_{ij}(\tilde{\bm u}_\ep)- \tilde{c}_{ij}(\tilde{\bm u}_\ep)}
        |\operatorname{Jac} \Phi|^{-1} \\ 
    & \qquad + \underbrace{\sum_k \theta S_h^l \frac{\partial \tilde{\bm u}_{\ep i}}{\partial y_k}D_h^l(\frac{\partial \Phi_k}{\partial x_j}|\operatorname{Jac} \Phi|^{-1})
        +\sum_k \theta S_h^l \frac{\partial \tilde{\bm u}_{\ep j}}{\partial y_k}D_h^l(\frac{\partial \Phi_k}{\partial x_i}|\operatorname{Jac} \Phi|^{-1})}_{\tilde{d}_{ij}(\tilde{\bm u}_\ep)} \bigg] \\
    & \qquad \cdot \bigg[ 2\tilde{e}_{ij}(\theta D_h^l \tilde{\bm u}_\ep) + \underbrace{\sum_k ( D_h^l \tilde{\bm u}_{\ep i} \frac{\partial \theta }{\partial y_k}\frac{\partial \Phi_k}{\partial x_j} + D_h^l \tilde{\bm u}_{\ep j} \frac{\partial \theta }{\partial y_k}\frac{\partial \Phi_k}{\partial x_i})}_{=: \tilde{c}_{ij}(\tilde{\bm u}_\ep)} \bigg] ~dy \\ 
    & = \tilde{a}_0 (\theta D_h^l \tilde{\bm u}_\ep, \theta D_h^l \tilde{\bm u}_\ep) +\frac{\nu }{2} \int_Q \tilde{d}_{ij}(\tilde{\bm u}_\ep)\cdot (2\tilde{e}_{ij}(\theta D_h^l \tilde{\bm u}_\ep)+\tilde{c}_{ij}(\tilde{\bm u}_\ep)) - |\tilde{c}_{ij}(\tilde{\bm u}_\ep)|^2 |\operatorname{Jac}|^{-1} ~dy \\ 
    & \ge c_1 \|\nabla (\theta D_h^l \tilde{\bm u}_\ep )\|^2 - c_2\|\nabla \tilde{\bm u}_\ep \|( \|\nabla (\theta D_h^l \tilde{\bm u}_\ep )\| + \|\nabla \tilde{\bm u}_\ep \|) .
\end{aligned}
\]
%

%%%%
{\bf{Proof of ~\eqref{eq:S-y-est-b}.}} We omit the summation w.r.t. $k$.
Substituting $\tilde{q} = D_{-h}^l (\theta D_h^l \tilde{p}_\ep)$ into \eqref{eq:S-y-2} 
\begin{equation}\label{eq:S-y-est-1}
\begin{aligned}
    0 = &  - \int_Q  D_h^l \tilde{p}_\ep D_h^l(\frac{\partial \tilde{\bm u}_{\ep i}}{\partial y_k} \frac{\partial \Phi_k}{\partial x_i} |\operatorname{Jac} \Phi|^{-1})~dy \\ 
    = & - \int_Q  D_h^l \tilde{p}_\ep (\frac{\partial \Phi_k}{\partial x_i} |\operatorname{Jac} \Phi|^{-1} \theta) \frac{\partial (\theta D_h^l \tilde{\bm u}_{\ep i})}{\partial y_k} + (\frac{\partial \Phi_k}{\partial x_i} |\operatorname{Jac} \Phi|^{-1}) D_h^l \tilde{p}_\ep \frac{\partial \theta}{\partial y_k} \theta D_h^l \tilde{\bm u}_{\ep i} \\ 
    & \qquad + \tilde{p}_\ep D_{-h}^l (\theta^2 S_h^l \frac{\partial \tilde{\bm u}_{\ep i}}{\partial y_k}) D_h^l (\frac{\partial \Phi_k}{\partial x_i} |\operatorname{Jac} \Phi|^{-1} )~dy \\
    = & - \int_Q  D_h^l \tilde{p}_\ep (\frac{\partial \Phi_k}{\partial x_i} |\operatorname{Jac} \Phi|^{-1} \theta) \frac{\partial (\theta D_h^l \tilde{\bm u}_{\ep i})}{\partial y_k} ~dy
    - \int_Q \tilde{p}_\ep D_{-h}^l \bigg[ \theta^2 S_h^l \frac{\partial \tilde{\bm u}_{\ep i}}{\partial y_k} D_h^l ( \frac{\partial \Phi_k}{\partial x_i} |\operatorname{Jac} \Phi|^{-1}) \\ 
    & \qquad + \frac{\partial \Phi_k}{\partial x_i} |\operatorname{Jac} \Phi|^{-1} \frac{\partial \theta}{\partial y_k} \theta D_h^l \tilde{\bm u}_{\ep i} \bigg]~dy
\end{aligned}
\end{equation}
\begin{equation}\label{eq:S-y-est-2}
\begin{aligned}
    \tilde b(\tilde{\bm v}, \tilde{ p}_\ep) & =  -\int_Q  \tilde{p}_\ep D_{-h}^l  \frac{\partial (\theta^2 D_h^l \tilde{\bm u}_{\ep i})}{\partial y_k}\frac{\partial \Phi_k}{\partial x_i} |\operatorname{Jac} \Phi|^{-1} ~dy  \\ 
    & =   -\int_Q  \tilde{p}_\ep \theta  D_{-h}^l (\frac{\partial (\theta D_h^l \tilde{\bm u}_{\ep i})}{\partial y_k}) \frac{\partial \Phi_k}{\partial x_i} |\operatorname{Jac} \Phi|^{-1} ~dy 
        - \int_Q \tilde{p}_\ep \bigg[ (D_{-h}^l \theta) S_{-h}^l (\frac{\partial (\theta D_h^l \tilde{\bm u}_{\ep i})}{\partial y_k}) \\ 
    & \qquad +( D_{-h}^l D_h^l \tilde{\bm u}_{\ep i} ) \frac{\partial \theta}{ \partial y_k}\theta  + D_{-h}^l ( \frac{\partial \theta}{ \partial y_k}\theta) S_{-h}^l D_h^l \tilde{\bm u}_{\ep i} \bigg] \frac{\partial \Phi_k}{\partial x_i} |\operatorname{Jac} \Phi|^{-1} ~dy \\ 
    & = -\int_Q D_h^l \tilde{p}_\ep (\frac{\partial \Phi_k}{\partial x_i} |\operatorname{Jac} \Phi|^{-1} \theta) \frac{\partial \theta D_h^l \tilde{\bm u}_{\ep i}}{ \partial y_k} ~ dy - 
        \int_Q S_h^l \tilde{p}_\ep D_h^l(\frac{\partial \Phi_k}{\partial x_i} |\operatorname{Jac} \Phi|^{-1} \theta)\frac{\partial \theta D_h^l \tilde{\bm u}_{\ep i}}{ \partial y_k} ~ dy \\ 
    & \qquad  - \int_Q \tilde{p}_\ep \bigg[ (D_{-h}^l \theta) S_{-h}^l (\frac{\partial (\theta D_h^l \tilde{\bm u}_\ep i)}{\partial y_k}) 
        +( D_{-h}^l D_h^l \tilde{\bm u}_{\ep i} ) \frac{\partial \theta}{ \partial y_k}\theta  \\ 
    & \qquad + D_{-h}^l ( \frac{\partial \theta}{ \partial y_k}\theta) S_{-h}^l D_h^l \tilde{\bm u}_{\ep i} \bigg] \frac{\partial \Phi_k}{\partial x_i} |\operatorname{Jac} \Phi|^{-1} ~dy.
\end{aligned}
\end{equation}
Summing up \eqref{eq:S-y-est-1} and \eqref{eq:S-y-est-2}, we get 
\[
|\tilde b(\tilde{\bm v}, \tilde{p}_\ep) | \le c_3\|\tilde{p}_\ep\| (\|\nabla (\theta D_h^l \tilde{\bm u}_\ep)\| +\|\nabla \tilde{\bm u}_\ep \|). 
\]
%

%%%%
{\bf{Proof of ~\eqref{eq:S-y-est-c}.}}
\[
\begin{aligned}
    & -\int_S \tilde{g}(0) \alpha_\ep (\tilde{\bm u}_\ep) \cdot D_h^l (\theta^2 D_h^l \tilde{\bm u}_\ep(y',0)) \sqrt{\mathrm{det}G} ~dy' \\
    = & -\int_S \theta^2  \tilde{g}(0) \underbrace{D_h^l \alpha_\ep (\tilde{\bm u}_\ep) \cdot D_h^l(\tilde{\bm u}_\ep)}_{\ge 0(\text{by } \eqref{eq:alpha_ep})} \sqrt{\mathrm{det}G} ~dy' 
        -\int_S \theta^2 S_h^l (\alpha_\ep(\tilde{\bm u}_\ep) \cdot D_h^l(\tilde{\bm u}_\ep)) D_h^l (\tilde{g}(0)\sqrt{\mathrm{det}G}) ~dy' \\ 
    \le & \int_S \theta^2 \big|D_h^l(\tilde{\bm u}_\ep) \big| \big| D_h^l (\tilde{g}(0)\sqrt{\mathrm{det}G}) \big|~dy' \le c_4\| \nabla_{y'}\tilde{g}(0)\|_{L^2(S)}\|\theta D_h^l \tilde{\bm u}_\ep\|_{L^2(S)} \\ 
    \le & c_4\| \nabla_{y'}\tilde{g}(0)\|_{L^2(S)}\|\theta D_h^l \tilde{\bm u}_\ep\|^{\frac{1}{2}} \|\nabla(\theta D_h^l \tilde{\bm u}_\ep )\|^{\frac{1}{2}}  \le  c_4 \|\tilde{g}(0)\|_{H^1(S)} \big( \|\tilde{\bm u}_\ep\|_{\bm H^1} + \|\nabla(\theta D_h^l \tilde{\bm u}_\ep )\| \big).
\end{aligned}
\]
Inserting the estimates \eqref{eq:S-y-est-a}-\eqref{eq:S-y-est-d} into \eqref{eq:S-y-1}, we conclude 
\[
\|\nabla_y (\theta D_h^l \tilde{\bm u}_\ep) \|_{L^2(Q)} \le C (\|\tilde{\bm u}_\ep\|_{\bm H^1} + \|\tilde{p}_\ep\| + \|\tilde{g}\|_{H^1(S)} + \|\tilde{\bm h}\|).
\]
Passing to the limits for $h \to 0$, 
\[
\sum_{i=1}^{d-1}\sum_{j=1}^{d} \|\frac{\partial}{\partial y_i}(\theta \frac{\partial \tilde{\bm u}_\ep}{\partial y_j})\|_{L^2(Q)} \le C (\|\tilde{\bm u}_\ep\|_{\bm H^1} + \|\tilde{p}_\ep\| + \|\tilde{g}\|_{H^1(S)} + \|\tilde{\bm h}\|), 
\]
which implies 
\[
\sum_{i=1}^{d-1}\sum_{j=1}^{d} \|\frac{\partial}{\partial y_i} \frac{\partial \tilde{\bm u}_\ep}{\partial y_j} \|_{L^2(Q_{R/2})} \le C (\|\tilde{\bm u}_\ep\|_{\bm H^1} + \|\tilde{p}_\ep\| + \|\tilde{g}\|_{H^1(S)} + \|\tilde{\bm h}\|).
\]
Together with the trace theorem, we have 
\[
\sum_{j=1}^d \|\frac{\partial \tilde{\bm u}_{\ep,d}}{\partial y_i} |_{Q_1}\|_{H^{\frac{1}{2}}(S)} + \sum_{j=1}^d \|\frac{\partial \tilde{\bm u}_{\ep,d}}{\partial y_i} |_{Q_2}\|_{H^{\frac{1}{2}}(S)} \le C (\|\tilde{\bm u}_\ep\|_{\bm H^1} + \|\tilde{p}_\ep\| + \|\tilde{g}\|_{H^1(S)} + \|\tilde{\bm h}\|), 
\]
which means all tangential derivatives of $\tilde{\bm u}_{\ep,d}|_{Q_1}, \tilde{\bm u}_{\ep,d}|_{Q_2} \in H^{\frac{1}{2}}(S)$. Therefore, we have $\tilde{\bm u}_{\ep,d}|_{Q_1}, \tilde{\bm u}_{\ep,d}|_{Q_2} \in H^{\frac{3}{2}}(S)$. Combining the above estimates with the argument of the partition of unity, we get
\[
\begin{aligned}
    \|\bm u_{0\ep,\bm n} |_{\Omega_{\mathrm{in}}} \|_{\bm H^{\frac{3}{2}}(\Gamma)}  +\|\bm u_{0\ep, \bm n} |_{\Omega_{\mathrm{out}}} \|_{\bm H^{\frac{3}{2}}(\Gamma)} & \le C(\|\bm u_{0\ep}\|_{\bm H^1}+\|p_{0\ep}\|+\|g(0)\|_{H^1(\Gamma)}+\|\bm h\|) \\ 
    & \le C(\|g(0)\|_{H^1(\Gamma)}+\|\bm h\|).
\end{aligned}
\]
Hence, we find \eqref{eq:u0e-H2}.

Since the $H^2$-norm of $\bm u_{0\ep}|_{\Omega_{\mathrm{in}}}$ and $\bm u_{0\ep}|_{\Omega_{\mathrm{out}}}$ is bounded independent of $\ep$, with \eqref{eq:u0e-u0-H1}, there exists a subsequence such that 
\[
\bm u_{0\ep} |_{\Omega_{\mathrm{in}}} \rightharpoonup \bm u_0 |_{\Omega_{\mathrm{in}}} \text{ in } \bm H^2(\Omega_{\mathrm{in}}), \quad \bm u_{0\ep} |_{\Omega_{\mathrm{out}}} \rightharpoonup \bm u_0 |_{\Omega_{\mathrm{out}}} \text{ in } \bm H^2(\Omega_{\mathrm{out}}).
\]
%
% The $H^2$ regularity of $\bm u_0|_{\Omega_{\mathrm{in}}}$ and $\bm u_0|_{\Omega_{\mathrm{out}}}$ implies the $H^1$ regularity of $p_0|_{\Omega_{\mathrm{in}}}$ and $p_0|_{\Omega_{\mathrm{out}}}$ respectively. 
By using the regularity result on the Dirichlet problem of the Stokes equations, we conclude that
\begin{equation}\label{eq:u0-p0-H2}
\|\bm u_0\|_{\bm H^2(\Omega_a)} + \|p_0\|_{\bm H^1(\Omega_a)} \le C(\|g(0)\|_{H^1(\Gamma)}+\|\bm h\|).
\end{equation}
\end{proof}

\end{document}